\theoremstyle{plain}
\newtheorem{theorem}{Theorem}[section]
\newtheorem*{theorem*}{Theorem}
\newtheorem{proposition}[theorem]{Proposition}
\newtheorem{corollary}[theorem]{Corollary}
\newtheorem{lemma}[theorem]{Lemma}
\theoremstyle{definition}
\newtheorem{definition}[theorem]{Definition}
\newtheorem{remark}[theorem]{Remark}
\newcommand{\enm}[1]{\ensuremath{#1}}          %
\newcommand{\op}[1]{\operatorname{#1}}
\newcommand{\cal}[1]{\mathcal{#1}}
\newcommand{\CC}{\enm{\mathbb{C}}}
\newcommand{\ZZ}{\enm{\mathbb{Z}}}
\newcommand{\PP}{\enm{\mathbb{P}}}
\newcommand{\Bb}{\enm{\cal{B}}}
\newcommand{\Aa}{\enm{\cal{A}}}
\newcommand{\Cc}{\enm{\cal{C}}}
\newcommand{\Dd}{\enm{\cal{D}}}
\newcommand{\Ee}{\enm{\cal{E}}}
\newcommand{\Ff}{\enm{\cal{F}}}
\newcommand{\Gg}{\enm{\cal{G}}}
\newcommand{\Ii}{\enm{\cal{I}}}
\newcommand{\Ll}{\enm{\cal{L}}}
\newcommand{\Mm}{\enm{\cal{M}}}
\newcommand{\Nn}{\enm{\cal{N}}}
\newcommand{\Oo}{\enm{\cal{O}}}
\newcommand{\Rr}{\enm{\cal{R}}}
\newcommand{\Uu}{\enm{\cal{U}}}
\newcommand{\Vv}{\enm{\cal{V}}}
\newcommand{\Ww}{\enm{\cal{W}}}
\renewcommand{\phi}{\varphi}
\renewcommand{\theta}{\vartheta}
\renewcommand{\epsilon}{\varepsilon}
\newcommand{\Ext}{\op{Ext}}
\renewcommand{\to}[1][]{\xrightarrow{\ #1\ }}
\newcommand{\old}[1]{}
\begin{document}

%\layout
\title[Globally generated vector bundles]{Globally generated vector bundles \\on a smooth quadric surface}
\author{E. Ballico, S. Huh and F. Malaspina}
\address{Universit\`a di Trento, 38123 Povo (TN), Italy}
\email{edoardo.ballico@unitn.it}
\address{Department of Mathematics, Sungkyunkwan University, Suwon 440-746, Korea}
\email{sukmoonh@skku.edu}
\address{Politecnico di Torino, Corso Duca degli Abruzzi 24, 10129 Torino, Italy}
\email{francesco.malaspina@polito.it}
\keywords{Higher rank vector bundles, globally generated, smooth quadric surface}
\thanks{The second author is supported by Basic Science Research Program 2010-0009195 through NRF funded by MEST}
\subjclass[2010]{Primary: {14F99}; Secondary: {14J99}}

\begin{abstract}
We give the classification of globally generated vector bundles of rank $2$ on a smooth quadric surface with $c_1\le (2,2)$ in terms of the indices of the bundles, and extend the result to arbitrary higher rank case. We also investigate their indecomposability and give the sufficient and necessary condition on numeric data of vector bundles for indecomposability.
\end{abstract}

\maketitle
%\layout

%\tableofcontents

\section{Introduction}
Globally generated vector bundles on projective varieties play an important role in projective algebraic geometry and their classification has been done quite recently over projective spaces by many people \cite{ACM}\cite{am}\cite{ce}\cite{e}\cite{m}\cite{SU}\cite{SU2}. Especially in \cite{e}, Ellia determines the Chern classes of globally generated vector bundles of rank $2$ on $\PP^2$, using several results on L\"uroth semigroup of smooth plane curves \cite{c}\cite{gr}. In \cite{ce} Chiodera and Ellia classify the globally generated vector bundles of rank $2$ on $\PP^n$ with small first Chern classes. We ask similar questions over a smooth quadric surface $Q$ and give answers as in our previous works \cite{BHM1}\cite{BHM2}. Our main theorem is the following:

\begin{theorem}
There exists an indecomposable and globally generated vector bundle of rank $r\ge 2$ on $Q$ with the Chern classes $(c_1, c_2)$ such that $c_1=(a,b)\leq (2,2)$, $a\le b$ if and only if $(c_1,c_2;r)$ is one of the following:
\begin{align*}
&(1,1,2;r=2,3),\\
&(1,2,2;2),(1,2,3;r=2,3),(1,2,4;r=2,3,4,5),\\
&(2,2,3;2),(2,2,4;r=2,3), (2,2,5;r=2,3),(2,2,6;r=2,3,4,5),\\
&(2,2,8;r=2,3,4,5,6,7,8)
\end{align*}
\end{theorem}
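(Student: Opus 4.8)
The plan is to reduce the whole statement to two ingredients: the rank $2$ case, and an inductive ``adding a trivial summand'' step that is available precisely because $Q$ is a surface. The key observation is that if $E$ is globally generated of rank $r\ge 3$, then a general section $s\in H^0(E)$ is nowhere vanishing, since its zero scheme has expected codimension $r>2=\dim Q$ and $E$ is base-point free; its cokernel is therefore a globally generated bundle $E'$ of rank $r-1$ with $c_i(E')=c_i(E)$, sitting in
$$0\to \Oo_Q \to E \to E' \to 0.$$
Conversely, since $H^1(\Oo_Q)=0$, the map $H^0(E)\to H^0(E')$ is always surjective, so any extension of a globally generated bundle by $\Oo_Q$ is again globally generated; such extensions are classified by $\Ext^1(E',\Oo_Q)=H^1(E'^\vee)$. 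I would use the downward direction to show that no ranks occur beyond the list, and the upward direction (forming nonsplit extensions by $\Oo_Q$) to realize all intermediate ranks, the process terminating exactly when $H^1(E'^\vee)=0$. Computing these $H^1$'s by Riemann--Roch and the defining sequences yields the maximal rank $r_{\max}$ attached to each $(c_1,c_2)$.

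For the base case $r=2$ I would take one general section, whose zero scheme $Z$ is now $0$-dimensional of length $c_2$, giving
$$0\to \Oo_Q \to E \to \Ii_Z(c_1) \to 0$$
with $\Ii_Z(c_1)$ globally generated. The problem thus becomes: for which $(a,b)\le(2,2)$ and which $Z$ is $\Ii_Z(c_1)$ globally generated, and when does the Serre (Hartshorne) extension give a locally free indecomposable $E$? Global generation forces $Z$ to be cut out scheme-theoretically by curves of type $(a,b)$, so $\deg Z\le c_1^2=2ab$, giving the a priori bounds $c_2\le 2,4,8$ for $c_1=(1,1),(1,2),(2,2)$. Local freeness is governed by the Cayley--Bacharach condition for $Z$ with respect to $|K_Q+c_1|=|(a-2,b-2)|$: this is vacuous for $(1,1)$ and $(1,2)$, while for $(2,2)$ it excludes $c_2=1$, and the remaining low values are excluded because the resulting bundle splits (for instance $(2,2,2)$ forces $\Oo_Q(1,1)^{\dirsum 2}$).

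The genuinely delicate part, and what I expect to be the main obstacle, is determining exactly which \emph{intermediate} values of $c_2$ admit a $0$-dimensional $Z$ with $\Ii_Z(c_1)$ globally generated, equivalently a base-point-free sub-system of $|(a,b)|$ cutting $Z$. The controlling phenomenon is a residual base point: if $Z$ is too short for its span, two general members of the associated pencil meet in the full length-$c_1^2$ complete intersection, which strictly contains $Z$, so $\Ii_Z(c_1)$ fails to be globally generated. This is exactly what excludes $c_2=7$ for $c_1=(2,2)$, where $h^0(\Ii_Z(2,2))=2$ and the residual length-$8$ intersection kills global generation; the same mechanism rules out the low values, and the special positions of $Z$ must be dispatched by the analogue for $|(a,b)|$ of the L\"uroth-semigroup analysis of plane curves used in \cite{e}. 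Carrying this out for every $(a,b)\le(2,2)$ is what produces precisely the sets $\{2\}$, $\{2,3,4\}$ and $\{3,4,5,6,8\}$ of admissible $c_2$.

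For the converse I would, for each surviving triple, construct the rank-$2$ bundle by the Serre construction from an explicit $Z$ (a complete intersection when $c_2=2ab$, and controlled degenerations otherwise), verify global generation and indecomposability directly, and then climb in rank by repeatedly forming nonsplit extensions $0\to\Oo_Q\to E\to E'\to 0$ as above, stopping at $r_{\max}$ where $H^1(E'^\vee)=0$. Two points will require real care: (i) at each stage one must choose an extension class in $H^1(E'^\vee)$ that keeps the bundle indecomposable rather than peeling off a further $\Oo_Q$ summand, which I would control through the numerical indecomposability criterion announced in the abstract; and (ii) one must match the computed $r_{\max}$ against the listed ranges, proving simultaneously that indecomposable extensions exist up to the stated rank and that the vanishing of $H^1(E'^\vee)$ forbids them beyond it.
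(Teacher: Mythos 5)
Your overall architecture matches the paper's: the rank~$2$ classification via the zero scheme $Z$ of a general section and global generation of $\Ii_Z(c_1)$, followed by climbing in rank through extensions by $\Oo_Q$ governed by $H^1(\Ff^\vee)=\Ext^1(\Ff,\Oo_Q)$. But two of your key steps do not work as described, and they are exactly where the paper spends most of its effort.

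First, the exclusion of $(2,2,7)$. Your mechanism (a pencil whose base locus is the full length-$8$ complete intersection strictly containing $Z$) only addresses the case $h^0(\Ii_Z(2,2))=2$, and in fact that case never occurs: since $\deg Z=7<8$, $Z$ is not a complete intersection, so $h^0(\Ii_Z(2,2))\ge 3$ and hence $h^1(\Ii_Z(2,2))>0$ by Riemann--Roch. When the system is more than a pencil, two general members meet in a length-$8$ scheme whose residual point \emph{moves}, so no contradiction with global generation follows from your argument. The paper instead derives the contradiction from $h^1(\Ii_Z(2,2))>0$ by a residual-scheme analysis: it chooses $E\in|\Oo_Q(1,1)|$ maximizing $w=\deg(Z\cap E)$ (global generation forces $w\le 4$, and $w\ge 3$ since $h^0(\Oo_Q(1,1))=4$), uses $h^1(\Ii_{E\cap Z}(2,2))=0$ to push the nonvanishing $h^1$ onto $\Ii_{\mathrm{Res}_E(Z)}(1,1)$, and kills the cases $w=4$ and $w=3$ separately, with auxiliary lemmas for reducible $E$. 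This is genuinely needed and is absent from your sketch.

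Second, your rule for the maximal rank --- ``the process terminates exactly when $H^1(E'^\vee)=0$'' --- is false, and the discrepancy is where most of the listed rank ranges come from. The linear-independence argument only shows $r\le h^1(\Ff^\vee)+2$ and only detects \emph{trivial} direct summands; an extension with linearly independent classes can still decompose with nontrivial factors. Concretely: for $c_1=(2,2)$, $c_2=3$ one has $h^1(\Ff^\vee)=1\ne 0$, yet the unique nontrivial extension is $\Oo_Q(1,0)\oplus\Oo_Q(0,1)\oplus\Oo_Q(1,1)$, so no indecomposable $r=3$ bundle exists (and indeed the theorem lists only $r=2$ there). Similarly $h^1(\Ff^\vee)=2$ for $\Ff\in\mathfrak{M}(4)$ but only $r=3$ occurs; $h^1(\Ff^\vee)=3$ for $\Ff\in\mathfrak{M}(5)$ but the rank-$4$ and rank-$5$ extensions without trivial factor are all of the form $\Oo_Q(1,0)\oplus\Oo_Q(0,1)\oplus\Aa_P$, hence decomposable; and for $c_2=6$ the rank-$6$ bundles all split as $(T\PP^3(-1)|_Q)^{\oplus 2}$ or $\Gg_5\oplus\Oo_Q(1,0)$. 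There is no ``numerical indecomposability criterion'' that substitutes for this: the paper's only general lemma of that kind applies to bundles of maximal type ($c_2=c_1^2$), where $h^0(\Ee)=r+1$ forces any decomposition to produce a trivial factor. Everything else is settled by ad hoc structural arguments (uniformity and splitting type, saturation of images of maps from $\Oo_Q(1,0)\oplus\Oo_Q(0,1)$ or from $T\PP^3(-1)|_Q$, invariance under the involution swapping the rulings, and semicontinuity from the generic member of the relevant irreducible family). Without supplying these arguments your proof establishes only an upper bound on the rank, not the stated lists.
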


In the second section we fix the notations and we explain the preliminaries. In the third section we show that any globally generated vector bundle with $c_1=(0,0)$ or $c_1=(a,0)$ 
and either $a\le 2$ or rank two splits. In proposition \ref{1.1} and proposition \ref{prop1.2} we classify indecomposable rank two globally generated vector bundles with $c_1=(1,1)$ and $c_1=(1,2)$. In the fourth section we deal with the case $c_1=(2,2)$. In the lemmas \ref{nost}, \ref{le1}, \ref{le2} and \ref{le3} we give a complete classification. We can find indecomposable rank two globally generated vector bundles for any $3\leq c_2\leq 8$ except $c_2=7$.

We apply an old method of associating to a rank $2$ vector bundle $\Ff$ on $Q$ a $0$-dimensional subscheme $Z$ on $Q$, and relate the global generatedness of $\Ff$ and the ideal sheaf $\Ii_Z$ twisted by $\Oo_Q(c_1)$ with $c_1=c_1(\Ff)$. Since $\Ii_Z(c_1)$ is globally generated, $Z$ is contained in a complete intersection of two hypersurfaces in $|\Oo_Q(c_1)|$. It gives us an upper bound for the possible second Chern class of $\Ff$. We then check the global generatedness case by case.

In the last section we study the higher rank case. The goal of this section is to describe the possible rank and Chern classes with which indecomposable vector bundles on $Q$ exist (see theorem \ref{hr}). If $\Ee$ is a globally generated vector bundle on $Q$ of rank $r\ge 3$,  then $\Ee$ can be expressed as an extension of a globally generated vector bundle $\Ff$ of rank $2$ by trivial factors. In other words, $\Ee$ can be determined by linearly independent elements in $\Ext ^1( \Ff, \Oo_Q)$ and it gives us an upper bound for the possible rank of indecomposable vector bundle. Then we check the existence of indecomposable and globally generated vector bundles case by case with these bounds.

We are grateful to the anonymous referees for several crucial corrections and suggestions.

%%%%%%%%%%%%%%%%%%%%%%%%%%%%%%%%%
\section{Preliminaries}
Throughout the article, our base field is $\CC$, the field of complex numbers.

 Let $Q$ be a smooth quadric surface isomorphic to $\PP V_1 \times \PP V_2$ for two 2-dimensional vector spaces $V_1$ and $V_2$ and then it is embedded into $\PP^3\simeq \PP V$ by the Segre map, where $V=V_1\otimes V_2$. Let us denote $f^*\Oo_{\PP^1}(a) \otimes g^*\Oo_{\PP^1}(b)$ by $\Oo_Q(a,b)$ and $\Ee\otimes \Oo_Q(a,b)$ by $\Ee(a,b)$ for coherent sheaves $\Ee$ on $Q$, where $f$ and $g$ are the projections from $Q$ to each factors. Then the canonical line bundle $\omega_Q$ of $Q$ is $\Oo_Q(-2,-2)$. We also denote by $\Ee^\vee$ the dual sheaf of $\Ee$.

For a coherent sheaf $\Ee$ of rank $r$ on $Q$ with the Chern classes $c_1=(a,b)\in \ZZ^{\oplus 2}$ and $c_2=c\in \ZZ$, we have :
\begin{align*}
c_1(\Ee (s,t))&=(a+rs,b+rt)\\
c_2(\Ee(s,t)) &=c+(r-1)(at+bs)+2st{r \choose 2}\\
\chi (\Ee)&=(a+1)(b+1)+r-c-1
\end{align*}
for $(s,t)\in \ZZ^{\oplus 2}$.\\
We use the following notion of stability: a vector bundle $\Ee$ of rank $2$ with $c_1=(0,0)$ is stable if and only if $H^0(\Ee)=0, H^0(\Ee(-1,1))= 0, H^0(\Ee(1,-1))= 0$
(see \cite{LP}).

We denote by $H_*^i(\Ee)$ the graded module $\oplus_{t\in \mathbb Z}H^i(Q, \Ee(t,t))$. We always consider the cohomology of sheaves. $\Ee$ is said to be {\it arithmetically Cohen--Macaulay} ({\it ACM} for short) if $H_*^1(\Ee)=0$.

The Castelnuovo-Mumford criterion states that if $H^i(\Ee(-i,-i))=0$ for all $i>0$, then $\Ee$ is globally generated. We also introduce another criterion for global generatedness which is stronger in some sense.

\begin{theorem}\label{gg}\cite{bm}
For a torsion-free coherent sheaf $\Ee$ on $Q$, the condition
$$H^1(\Ee(-1,0))=H^1(\Ee(0,-1))=H^2(\Ee(-1,-1))=0$$
implies that $\Ee$ is globally generated.
\end{theorem}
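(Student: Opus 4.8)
The plan is to verify global generation one point at a time, exploiting the two rulings of $Q=\PP^1\times\PP^1$. Recall that a coherent sheaf $\Ee$ is globally generated precisely when, for every closed point $x\in Q$, the evaluation map $H^0(\Ee)\longrightarrow \Ee\otimes k(x)$ onto the fibre is surjective (Nakayama). So I would fix $x=(x_1,x_2)$ and let $F=f^{-1}(x_1)\in|\Oo_Q(1,0)|$ be the fibre of the first projection through $x$: it is a smooth rational curve with $\Oo_Q(0,1)|_F\cong\Oo_{\PP^1}(1)$ and $\Oo_Q(1,0)|_F\cong\Oo_{\PP^1}$. The idea is to descend the problem to $F\cong\PP^1$, where global generation is transparent.

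The first step is to lift sections from $F$ to $Q$. Since $\Ee$ is torsion-free, multiplication by the section of $\Oo_Q(1,0)$ cutting out $F$ is injective on every twist of $\Ee$, so tensoring the sequence $0\rightarrow\Oo_Q(-1,0)\rightarrow\Oo_Q\rightarrow\Oo_F\rightarrow 0$ with $\Ee$ remains short exact:
\[
0\rightarrow\Ee(-1,0)\rightarrow\Ee\rightarrow\Ee|_F\rightarrow 0.
\]
The hypothesis $H^1(\Ee(-1,0))=0$ then yields $H^0(\Ee)\onto H^0(\Ee|_F)$. Consequently, if $\Ee|_F$ is globally generated, then each global section of $\Ee|_F$ lifts to $\Ee$; and since $\Ee\otimes k(x)=(\Ee|_F)\otimes k(x)$, the composite $H^0(\Ee)\onto H^0(\Ee|_F)\rightarrow\Ee\otimes k(x)$ is surjective, i.e.\ $\Ee$ is generated at $x$. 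It therefore suffices to prove global generation of $\Ee|_F$ on $\PP^1$.

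To see this, I would twist the defining sequence of $F$ by $\Oo_Q(0,-1)$ and again tensor with $\Ee$, obtaining
\[
0\rightarrow\Ee(-1,-1)\rightarrow\Ee(0,-1)\rightarrow(\Ee|_F)(-1)\rightarrow 0,
\]
using $\Oo_Q(0,-1)|_F\cong\Oo_{\PP^1}(-1)$. The relevant portion of the cohomology sequence is $H^1(\Ee(0,-1))\rightarrow H^1\big((\Ee|_F)(-1)\big)\rightarrow H^2(\Ee(-1,-1))$, and the two outer hypotheses $H^1(\Ee(0,-1))=0$ and $H^2(\Ee(-1,-1))=0$ force $H^1\big((\Ee|_F)(-1)\big)=0$. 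Now every coherent sheaf $\Gg$ on $\PP^1$ decomposes as a torsion part plus a bundle $\bigoplus_i\Oo_{\PP^1}(a_i)$; the vanishing $H^1(\Gg(-1))=0$ forces each $a_i\ge 0$, so the locally free part is globally generated, while the torsion part is globally generated automatically. Hence $\Ee|_F$ is globally generated, and letting $x$ range over $Q$ completes the proof. Note that all three hypotheses are consumed, and the symmetric argument along fibres of the second projection would interchange the roles of $H^1(\Ee(-1,0))$ and $H^1(\Ee(0,-1))$.

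The only genuinely delicate point is working with a torsion-free rather than locally free $\Ee$. I must check that multiplication by the fibre equation is injective so that the two restriction sequences stay short exact — this is exactly where torsion-freeness enters — and I should keep in mind that $\Ee|_F$ may acquire torsion along the finite locus where $\Ee$ is not locally free. The latter is harmless: torsion sheaves on a curve are globally generated and contribute nothing to $H^1\big((\Ee|_F)(-1)\big)$, so the $\PP^1$-argument is unaffected. Apart from this bookkeeping, the proof is a clean two-step descent: lift sections using $H^1(\Ee(-1,0))=0$, then certify generation on the fibre via the $\PP^1$-vanishing distilled from $H^1(\Ee(0,-1))=0$ and $H^2(\Ee(-1,-1))=0$.
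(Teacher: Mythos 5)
The paper does not prove this statement; it is quoted from the reference [bm] and used as a black box, so there is no internal proof to compare against. Your argument is a correct, self-contained proof. The two restriction sequences are legitimately short exact because the kernel of multiplication by the equation of the fibre $F$ would be a torsion subsheaf of $\Ee$, which vanishes by hypothesis; the sequence $0\to\Ee(-1,0)\to\Ee\to\Ee|_F\to 0$ together with $H^1(\Ee(-1,0))=0$ lifts sections from $F$, and the twisted sequence $0\to\Ee(-1,-1)\to\Ee(0,-1)\to(\Ee|_F)(-1)\to 0$ together with the other two vanishings gives $H^1\bigl((\Ee|_F)(-1)\bigr)=0$, which on $\PP^1$ (splitting off the torsion part, which is harmless for both $H^1$ and global generation) forces all twists of the locally free part to be nonnegative. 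This is exactly the standard fibre-restriction mechanism behind multigraded Castelnuovo--Mumford regularity criteria on products of projective spaces, which is the context of [bm]; your version is arguably more elementary than invoking the general machinery, and it makes visible where torsion-freeness is genuinely needed (injectivity of multiplication by the fibre equation) and that a single ruling suffices while consuming all three hypotheses.
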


\begin{definition}
A globally generated vector bundle $\Ee$ fitting into the sequence
\begin{equation}\label{eqii1}
0\to \Oo_Q(-a,-b)\to \Oo_Q^{\oplus (r+1)} \to \Ee \to 0
\end{equation}
is called to be {\it of maximal type}.
\end{definition}

\begin{remark}\label{ff1}
\begin{enumerate}
\item A vector bundle $\Ee$ of maximal type has $c_1=(a,b)$ and $c_2=c_1^2=2ab$. Thus it achieves the maximal possibility of the second Chern class. Let $\Ff$ be an extension of $\Ee$ by $\Oo_Q$. Since $\Ext^1 (\Oo_Q, \Oo_Q)=0$, so $\Ff$ is of maximal type. In other words, the property of maximal type is preserved under the extension by $\Oo_Q$.
\item Every vector bundle $\Ee$ of maximal type satisfies $H^0(\Ee(-x,-y))=0$ for all pairs of nonnegative integers $(x,y)$ with $x^2+y^2>1$. In fact, in the case of $c_1=(1,0)$ or $(0,1)$, we have $\Ee \cong \Oo_Q^{\oplus (r-1)}\oplus \Oo_Q(c_1)$.
\item  For a rank $r$ globally generated vector bundle $\Ee$ of maximal type with $c_1(\Ee)=(a,b)$, we have $h^0(\Ee)=r+1$ from the sequence (\ref{eqii1}) if $h^1(\Oo_Q(-a,-b))=0$. The condition holds if either $a>0$ and $b>0$ or $(a,b)=(0,1)$ or $(a,b)=(1,0)$.  
\end{enumerate}
\end{remark}

\begin{lemma}\label{mi}
Let us fix a vector bundle $\Ee$ fitting in the exact sequence (\ref{eqii1}) with $h^1(\Oo_Q(-a,-b))=0$.
If $\Ee$ has no trivial factor, then it is indecomposable.
\end{lemma}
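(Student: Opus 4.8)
The plan is to argue by contradiction using a count of global sections, so first I would extract two cohomological consequences of the hypothesis. Since the inclusion in (\ref{eqii1}) is nonzero, we have $\Hom(\Oo_Q(-a,-b),\Oo_Q)\ne 0$, which forces $a,b\ge 0$; hence we may assume $(a,b)\ne(0,0)$, because if $(a,b)=(0,0)$ then (\ref{eqii1}) reads $0\to\Oo_Q\to\Oo_Q^{\oplus(r+1)}\to\Ee\to 0$ and gives $\Ee\cong\Oo_Q^{\oplus r}$, which already has a trivial factor and is excluded. With $a,b\ge 0$ and not both zero we get $H^0(\Oo_Q(-a,-b))=0$, and combining this with the standing hypothesis $h^1(\Oo_Q(-a,-b))=0$, the long exact cohomology sequence of (\ref{eqii1}) yields $h^0(\Ee)=r+1$ (this is exactly Remark \ref{ff1}(3)). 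Moreover (\ref{eqii1}) presents $\Ee$ as a quotient of $\Oo_Q^{\oplus(r+1)}$, so $\Ee$ is globally generated.

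The second ingredient is a general fact I would isolate as a preliminary observation: if $\Aa$ is a globally generated vector bundle of rank $s\ge 1$ on $Q$, then $h^0(\Aa)\ge s$, with equality if and only if $\Aa\cong\Oo_Q^{\oplus s}$. The inequality holds because the evaluation map is fiberwise surjective; for the equality case, $h^0(\Aa)=s$ makes evaluation a surjection $\Oo_Q^{\oplus s}\twoheadrightarrow\Aa$ of locally free sheaves of the same rank, whose kernel is torsion-free of rank $0$ and hence zero, so the map is an isomorphism.

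Now suppose for contradiction that $\Ee$ is decomposable, $\Ee\cong\Aa\oplus\Bb$ with $s:=\op{rank}\Aa\ge 1$ and $t:=\op{rank}\Bb\ge 1$, so $s+t=r$. As direct summands of the globally generated bundle $\Ee$, both $\Aa$ and $\Bb$ are globally generated, whence $h^0(\Aa)\ge s$ and $h^0(\Bb)\ge t$. But $h^0(\Aa)+h^0(\Bb)=h^0(\Ee)=r+1=(s+t)+1$, so the two nonnegative excesses $h^0(\Aa)-s$ and $h^0(\Bb)-t$ sum to $1$, and one of them must vanish. By the equality case above, the corresponding summand is a trivial bundle, so $\Ee$ has a trivial factor, contradicting the hypothesis. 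Since any decomposition into several summands can be grouped into two, this proves $\Ee$ is indecomposable.

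The whole argument is a section count, and the one point needing care — the main obstacle — is pinning down $h^0(\Ee)=r+1$ \emph{exactly} rather than merely $\le r+1$: this is where both halves of the input, $H^0(\Oo_Q(-a,-b))=0$ and $h^1(\Oo_Q(-a,-b))=0$, are used, and where one must first dispose of the degenerate case $(a,b)=(0,0)$. Once that equality is secured, the excess bookkeeping forces a trivial summand and everything else is formal.
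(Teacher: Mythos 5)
Your proof is correct and follows essentially the same route as the paper's: both arguments use $h^0(\Ee)=r+1$ (Remark \ref{ff1}) together with global generation of each summand to force $h^0(\Ee_i)=\mathrm{rk}(\Ee_i)$ for one summand, which must then be trivial. You are somewhat more careful than the paper in disposing of the case $(a,b)=(0,0)$ and in justifying why $h^0=\mathrm{rank}$ implies triviality, but the underlying section-counting argument is identical.
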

\begin{proof}
Let us assume that it is decomposed as $\Ee \cong \Ee_1 \oplus \Ee_2$.  Since
$h^0(\Ee )=\mathrm{rk}(\Ee )+1$ (Remark \ref{ff1}) and $\Ee_1$ and $\Ee_2$ are globally generated, we have $h^0(\Ee _i) = \mathrm{rk}(\Ee_i)$ for some $i$. In particular $\Ee_i$ is trivial and so $\Ee$ has a trivial factor.
\end{proof}

\begin{remark}\label{ff2}
For a vector bundle $\Ee$ fitting in the exact sequence (\ref{eqii1}) with $a=0$ and $r\ge 2$, let us call $\phi : \Oo_Q(0,-b)\to \Oo_Q^{\oplus (r+1)}$ the
map with $\mathrm{coker}(\phi )=\Ee$. Let $\pi : Q\to \PP^1$ denote the projection onto the second factor. Since $a=0$, the map $\phi$ is the pull-back by $\pi$
of a map $v: \Oo _{\PP^1}(-b)\to \Oo _{\PP^1}^{\oplus (r+1)}$ with locally free cokernel. Since $\Ee \cong \pi ^\ast (\mathrm{coker}(v))$ and $r\ge 2$, $\Ee$ is decomposable.
\end{remark}

\begin{lemma}\label{trivial}
Let $\Ff$ be a globally generated sheaf on $Q$. Assume the existence of a non-zero map $f: \Ff \to \Oo _Q$. Then $\Ff$ has $\Oo _Q$ as a factor.
\end{lemma}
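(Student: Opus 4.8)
The plan is to show that $f$ is a split surjection with image $\Oo_Q$. First I would look at $\Image(f) \subseteq \Oo_Q$, which is a nonzero coherent subsheaf since $f \neq 0$; as a quotient of the globally generated sheaf $\Ff$ it is itself globally generated.

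The first step is to check that $f$ is surjective, i.e. that $\Image(f) = \Oo_Q$. Being globally generated and nonzero, $\Image(f)$ has a nonzero global section, and the inclusion $\Image(f) \into \Oo_Q$ gives $H^0(\Image(f)) \subseteq H^0(\Oo_Q) = \CC$; hence $H^0(\Image(f)) = \CC$. A nonzero element, regarded as a morphism $\Oo_Q \to \Image(f) \into \Oo_Q$, is multiplication by a nonzero constant and so is an isomorphism onto $\Oo_Q$. As its image lies in $\Image(f)$, we get $\Image(f) = \Oo_Q$, producing a short exact sequence $0 \to \Kk \to \Ff \to \Oo_Q \to 0$ in which the surjection is $f$ and $\Kk = \ker(f)$.

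The second step, which I expect to be the actual obstacle, is to split this sequence. In general a sheaf surjection onto a line bundle need not induce a surjection on $H^0$ (the generator of $H^0(\Oo_Q)$ could die in $H^1(\Kk)$), so I cannot simply lift the constant $1$. Global generation of $\Ff$ is exactly what saves the argument: composing the evaluation $H^0(\Ff) \tensor \Oo_Q \onto \Ff$ with $f$ shows that the constants $f(\sigma)$, for $\sigma$ ranging over $H^0(\Ff)$, generate $\Oo_Q$ as a sheaf; since these lie in $H^0(\Oo_Q) = \CC$, at least one $f(\sigma)$ must be a nonzero constant. Rescaling gives $\sigma \in H^0(\Ff) = \Hom(\Oo_Q, \Ff)$ with $f \circ \sigma = \id_{\Oo_Q}$.

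Finally $\sigma$ is a section of $f$, so the sequence splits and $\Ff \cong \Oo_Q \oplus \Kk$; in particular $\Oo_Q$ is a factor of $\Ff$, as desired. The whole argument hinges on transporting global generation through $f$ onto $\Oo_Q$ in the last step, which pins down a nonzero constant in the image of $H^0(f)$ and yields the splitting.
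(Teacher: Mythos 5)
Your argument is correct and follows essentially the same route as the paper: both establish that a non-zero map to $\Oo_Q$ composed with the evaluation (or with a trivial surjection $\Oo_Q^{\oplus n}\onto\Ff$ coming from global generation) must produce a non-zero constant endomorphism of $\Oo_Q$, which is an isomorphism and hence splits off $\Oo_Q$ as a direct factor. The extra detail you supply in verifying surjectivity of $f$ via $\Image(f)$ is fine but is the same observation the paper makes in one line.
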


\begin{proof}
Since $\Ff$ is globally generated, every non-zero map $\Ff \to \Oo _Q$ is surjective.
Since $\Ff$ is globally generated, there is an integer $n>0$ and a surjection $h: \Oo _Q^{\oplus n} \to \Ff$, say induced by $s_1,\dots ,s_n\in H^0(\mathcal{H}om(\Oo_Q,\Ff))
$. Since $f\circ h$
is surjective, there is $s_i\ne 0$ such that $f\circ s_i\ne 0$. Hence $f\circ s_i: \Oo _Q\to \Oo _Q$ is surjective. Hence $f\circ s_i$ is an isomorphism and so $s_i(\Oo _Q)$ is a direct factor of $\Ff$.
\end{proof}

%%%%%%%%%%%%%%%%%%%%%%%%%%%%%%

\section{Rank two globally generated vector bundle with $c_1< (2,2)$ }
Let us assume that $\Ee$ is a globally generated vector bundle of rank $r$ on $Q$ with the Chern classes $(c_1, c_2)$ and then $\Ee$ fits into the following sequence for general $r-1$ sections of $H^0(\Ee)$ :
\begin{equation}\label{eqa1}
0\to \Oo_Q^{\oplus (r-1)} \to \Ee \to \Ii_Z (c_1) \to 0,
\end{equation}
where $Z$ is a reduced $0$-dimensional subscheme of $Q$ whose length is $c_2$ due to the parts (E) and (F) at page 4 in \cite{am}. Since $\Ii_Z(c_1)$ is globally generated, $Z$ is contained in the complete intersection of two hypersurfaces of bidegree $c_1$. In particular, we have $c_2 \le c_1^2$.

\begin{proposition}\label{prop1}\cite{sierra}
Let $\Ee$ be a globally generated vector bundle of rank $r$ on $Q$ such that $H^0(\Ee(-c_1))\not= 0$, where $c_1$ is the first Chern class of $\Ee$. Then we have
$$\Ee \simeq \Oo_Q^{\oplus (r-1)} \oplus \Oo_Q(c_1).$$
\end{proposition}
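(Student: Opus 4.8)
The hypothesis furnishes a nonzero section $s \in H^0(\Ee(-c_1))$, equivalently a nonzero morphism $\phi\colon \Oo_Q(c_1)\to\Ee$. The plan is to enlarge $\phi$ to a morphism out of $\Oo_Q^{\oplus(r-1)}\oplus\Oo_Q(c_1)$ and to recognize it as an isomorphism by a determinant computation. The crucial structural fact is that $\det\Ee=\Oo_Q(c_1)$, so the source and the target of the enlarged morphism have the \emph{same} determinant $\Oo_Q(c_1)$; its determinant therefore lives in $\Hom(\Oo_Q(c_1),\Oo_Q(c_1))=H^0(\Oo_Q)=\CC$ and is a constant, so it suffices to check that this constant is nonzero, which can be done at a single well-chosen point.

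First I would record that $\phi$ is injective as a map of sheaves (a nonzero map from a line bundle into a torsion-free sheaf), and that the locus $U\subseteq Q$ where the induced fiber map $\phi_p\colon\Oo_Q(c_1)|_p\to\Ee_p$ is nonzero is a dense open subset, namely the complement of the zero scheme of $s$. Fix a point $p\in U$, so that $\phi(p)\ne 0$ in $\Ee_p$. Because $\Ee$ is globally generated, the evaluation $H^0(\Ee)\to\Ee_p$ is surjective, so I can pick sections $s_1,\dots,s_{r-1}\in H^0(\Ee)$ whose values $s_1(p),\dots,s_{r-1}(p)$ complete $\phi(p)$ to a basis of the fiber $\Ee_p\cong\CC^{\,r}$ (choose a complement of $\langle\phi(p)\rangle$ and lift a basis of it through the surjective evaluation).

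These data assemble into a morphism
$$\Psi=(s_1,\dots,s_{r-1},\phi)\colon \Oo_Q^{\oplus(r-1)}\oplus\Oo_Q(c_1)\lra \Ee$$
of vector bundles of the same rank $r$. By the choice of $p$ the fiber map $\Psi_p$ is an isomorphism, so $\det\Psi\in H^0(\Oo_Q)=\CC$ does not vanish at $p$; being a constant, it is then nowhere zero, whence $\Psi_q$ is an isomorphism for every $q\in Q$ and $\Psi$ is an isomorphism of bundles. This yields $\Ee\simeq\Oo_Q^{\oplus(r-1)}\oplus\Oo_Q(c_1)$, as desired.

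The step I expect to require the most care is the identification of $\det\Psi$ as a global section of $\Oo_Q$ together with the reduction of ``isomorphism'' to ``nonvanishing of a single constant'': this is exactly where the equality $\det\Ee=\Oo_Q(c_1)$ enters, and it is what allows a purely pointwise verification at $p$ to propagate to all of $Q$ (using $H^0(\Oo_Q)=\CC$ on the connected surface $Q$). Everything else—injectivity of $\phi$, density of $U$, and the existence of sections completing $\phi(p)$ to a basis—follows from global generation and standard facts about morphisms of sheaves.
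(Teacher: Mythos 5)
Your proof is correct, and the paper itself offers no proof to compare against: it simply cites this as a result of Sierra (reference \cite{sierra}). Your determinant argument --- completing the nowhere-obstructed map $\Oo_Q(c_1)\to\Ee$ to a morphism $\Oo_Q^{\oplus(r-1)}\oplus\Oo_Q(c_1)\to\Ee$ of bundles of equal rank and determinant, and observing that $\det\Psi\in H^0(\Oo_Q)=\CC$ is a nonzero constant --- is the standard argument for this splitting and is sound as written.
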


As an automatic consequence, $\Ee\simeq  \Oo_Q^{\oplus r}$ is the only globally generated vector bundle of rank $r$ on $Q$ with $c_1=(0,0)$.

\begin{definition}
Let $\ge$ be a partial ordering on $\ZZ^{\oplus 2}$ defined by $(a',b')\ge (a,b)$ if $a' \ge a$ and $b'\ge b$. For a vector bundle $\Ee$ on $Q$, a maximal element with respect to $\ge$ in the following set
$$\{ (a,b)\in \ZZ^{\oplus 2} ~|~ h^0(\Ee (-a,-b))\ne 0 \}$$
is called {\it an index} of $\Ee$.
\end{definition}

\begin{remark}[Due to one of the referees]
In general there may exist more than one indices for a given vector bundle. If $(m,n)$ is an index of a globally generated vector bundle $\Ee$ of rank 2 on $Q$ with $c_1(\Ee)=(a,b)$, then we have $(m,n)\leq (a,b)$. On the other hand, there exists an index of the form $(a,0)$ or $(0,b)$ with $a,b\geq 0$. In general, an arbitrary index $(m,n)$ of $\Ee$ does not have to satisfy $(0,0)\leq (m,n)$. A referee gave the following example. Fixing a generator of $\Lambda^2 V_i$ where $Q\cong \PP V_1 \times \PP V_2$, we get isomorphisms $V_i \cong V_i^\vee$. Let 
$$\tau : \Oo_Q(-1,-1)\to \Oo_Q \otimes (V_1 \otimes V_2)$$
be the canonical injective morphism. Let $\triangle \subset V_1 \otimes V_2$ be a linear subspace of dimension $1$: $\triangle =\CC \delta$. We can also view $\triangle$ as a point in $\PP \mathrm{Hom} (V_1^\vee, V_2)$. Then let 
$$\tau_{\triangle} : \Oo_Q(-1,-1) \to \Oo_Q \otimes ( (V_1 \otimes V_2)/\triangle)=\Oo_Q\otimes \CC^3$$
be the composition of $\tau$ with the quotient $V_1 \otimes V_2 \to (V_1 \otimes V_2)/\triangle$. Then $\tau_{\triangle}$ is injective as a morphism of vector bundles if and only if $\triangle$ does not belong to the quadric of non-bijective morphisms in $\PP \mathrm{Hom}(V_1^\vee, V_2)$, which is the same as the quadric of decomposable elements in $\PP (V_1 \otimes V_2)$. Supposing $\delta$ is bijective, $\Ee_{\triangle}:=\mathrm{coker}(\tau_{\delta})$ is a globally generated vector bundle of rank $2$ with Chern classes $c_1=(1,1)$ and $c_2=2$. From the exact sequence
\begin{equation}
0\to \Oo_Q(-1,-1) \to \Oo_Q\otimes \CC^3 \to \Ee_{\triangle} \to 0
\end{equation}
we have $\mathrm{Hom}(\Oo_Q(-1,1), \Ee_{\triangle}) \cong H^1 (\Oo_Q(0,-2))\cong \CC$. We also deduce immediately that $\mathrm{Hom}(\Oo_Q(0,1), \Ee_{\triangle})=\{0\}$, and an exact sequence
$$\xymatrix{0\ar[r] &\mathrm{Hom}(\Oo_Q(-1,2), \Ee_{\triangle}) \ar[r] & H^1(\Oo_Q(0,-3)) \ar@{=}[d] \ar[r]^-\phi & H^1(\Oo_Q(1,-2))\otimes \CC^3 \ar@{=}[d]\\
&&V_2 & V_1^\vee \otimes (V_1\otimes V_2)/\triangle}$$
The map $\phi : V_2 \to V_1^\vee \otimes (V_1 \otimes V_2)/\triangle$ is the canonical injection and thus we have $\mathrm{Hom}(\Oo_Q(-1,2), \Ee_{\triangle})=\{0\}$. This proves that $(-1,1)$ is an index of $\Ee_{\triangle}$ and so is $(1,-1)$ similarly. Thus we have exact sequences
\begin{align*}
0\to \Oo_Q(-1,1) \to \Ee_{\triangle} \to \Oo_Q(2,0) \to 0,\\
0\to \Oo_Q(1,-1) \to \Ee_{\triangle} \to \Oo_Q(0,2) \to 0,\\
0\to \Oo_Q(-1,1)\oplus \Oo_Q(1,-1) \to \Ee_{\triangle} \to \Oo_C(1,1) \to 0,
\end{align*} 
where $C$ is the curve determined by $\delta$, viewed as an element of $V_1^\vee \otimes V_2^\vee$. Later we show by classification that $\Ee_{\triangle}$ is isomorphic to a pull-back of $T\PP^2(-1)$, the tangent bundle of $\PP^2$ twisted by $-1$, under a linear projection from $Q$ to $\PP^2$. 
\end{remark}

\begin{lemma}\label{prop4}
Let $\Ee$ be a globally generated vector bundle of rank $r$ on $Q$ with $c_1=(a,0)$. If $r>2$, then assume $a\le 2$.Then we have
$$\Ee \cong \bigoplus_{i=1}^r\Oo_Q(a_i,0)$$
where $a_i$'s are nonnegative integers whose sum is $a$.
Similar answer can be given when $c_1=(0,b)$.
\end{lemma}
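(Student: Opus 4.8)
The plan is to restrict $\Ee$ to the fibers of a suitable projection, prove that these restrictions are trivial, and then upgrade this fiberwise triviality to the statement that $\Ee$ is a pullback from one of the two rulings; the splitting of vector bundles on $\PP^1$ does the rest. I treat the case $c_1=(a,0)$, the case $(0,b)$ being symmetric. Let $f\colon Q\to \PP V_1$ be the first projection, whose fibers $F\cong \{p\}\times \PP V_2$ are copies of $\PP^1$. Since $\Oo_Q(a,0)|_F\cong \Oo_{\PP^1}$, the restriction $\Ee|_F$ is a vector bundle of rank $r$ and degree $c_1(\Ee)\cdot F=0$ on $\PP^1$; moreover it is globally generated, being the restriction of a globally generated sheaf. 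By Grothendieck's splitting theorem $\Ee|_F\cong \bigoplus_{i=1}^r\Oo_{\PP^1}(d_i)$ with all $d_i\ge 0$ (global generation) and $\sum_i d_i=0$ (degree), which forces $\Ee|_F\cong \Oo_{\PP^1}^{\oplus r}$ for every fiber $F$.

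The main step is to convert this fiberwise triviality into a genuine pullback. Since $h^0(F,\Ee|_F)=r$ and $h^1(F,\Ee|_F)=0$ are constant as $F$ varies, cohomology and base change (Grauert's theorem) shows that $R^1f_*\Ee=0$, that $\Ww:=f_*\Ee$ is locally free of rank $r$ on $\PP V_1$, and that the formation of $\Ww$ commutes with base change. Consequently the counit $f^*f_*\Ee\to \Ee$ of the adjunction restricts on each fiber $F$ to the evaluation map $H^0(F,\Ee|_F)\otimes \Oo_F\to \Ee|_F$, which is an isomorphism because $\Ee|_F$ is trivial. A morphism of vector bundles of equal rank that is an isomorphism on every fiber is itself an isomorphism, so $\Ee\cong f^*\Ww$.

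Finally, $\PP V_1\cong \PP^1$, so $\Ww\cong \bigoplus_{i=1}^r\Oo_{\PP V_1}(a_i)$ and hence $\Ee\cong \bigoplus_{i=1}^r f^*\Oo_{\PP V_1}(a_i)=\bigoplus_{i=1}^r\Oo_Q(a_i,0)$. Restricting $\Ee$ to a section $\PP V_1\times \{q\}$ of $f$ identifies $\Ww$ with a globally generated bundle, whence each $a_i\ge 0$; comparing first Chern classes gives $\sum_i a_i=a$. I expect the only genuine obstacle to be the base-change step of the second paragraph, since everything else is formal. Note that this argument is uniform in $r$ and $a$ and, in particular, does not use the extra hypothesis $a\le 2$ for $r>2$.
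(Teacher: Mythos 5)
Your proof is correct, and it takes a genuinely different route from the paper. The paper argues via indices: for $r=2$ it chooses a maximal twist $\Oo_Q(c,0)\into \Ee$ with quotient $\Ii_Z(a-c,0)$, uses global generation of that quotient to force $Z=\emptyset$, and concludes that the resulting extension of line bundles splits; for $r>2$ it quotients $\Ee$ by $r-2$ general sections to reduce to the rank-two case and then needs the hypothesis $a\le 2$ (together with Lemma \ref{trivial} and induction on the rank) to peel off direct factors. You instead observe that $\Ee$ restricts trivially to every fiber of the first projection, so that by Grauert and cohomology-and-base-change the counit $f^*f_*\Ee\to\Ee$ is a fiberwise, hence global, isomorphism, and Grothendieck's theorem on $\PP^1$ finishes the argument. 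Your route is uniform in $r$ and $a$, requires no case division or induction, and in particular dispenses with the hypothesis $a\le 2$ for $r>2$, so it proves a strictly stronger statement; it is essentially the same mechanism the paper itself invokes in Remark \ref{ff2} for cokernels of maps pulled back from a ruling, just applied systematically. What the paper's argument buys in exchange is that it stays entirely within the elementary index-and-extension framework used for all the other first Chern classes, avoiding relative duality and base-change machinery.
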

\begin{proof}
Let us assume that the rank of $\Ee$ is $2$ and $(c,0)$ be an index of $\Ee$. Then $\Ee$ fits into the sequence
$$0\to \Oo_Q(c,0) \to \Ee \to \Ii_Z(a-c,0) \to 0.$$
Since $\Ii_Z(a-c,0)$ is globally generated, so $Z$ must be empty and thus we have $\Ee \cong \Oo_Q(c,0)\oplus \Oo_Q(a-c,0)$. 

Now assume $r>2$ and $a\le 2$. Since $\Ee$ is globally generated, we have an exact sequence
$$0\to \Oo_Q^{\oplus (r-2)} \to \Ee \to \Ff \to 0$$
where $\Ff$ is globally generated and thus we have $\Ff \cong \Oo_Q(b_1,0)\oplus \Oo_Q(b_2,0)$ with $b_1+b_2=a$. If $b_i \le 1$ for
all $i$, then this exact sequence splits. If $b_i\ge 2$ for some $i$, say $b_1\ge 2$, then $b_1=a=2$ and $b_2=0$. Lemma
\ref{trivial} gives $\Ee \cong \Oo _Q\oplus \Gg$ for some $\Gg$. Apply induction on the rank to the bundle $\Gg$.
\end{proof}

Now let us focus on the case of rank $2$. From a general section $s\in H^0(\Ee)$, we have
\begin{equation}\label{eqa3}
0\to \Oo_Q \stackrel{s}{\to} \Ee \to \Ii_Z(c_1) \to 0.
\end{equation}
Since $\Ii_Z(c_1)$ is also globally generated, a general curve of bidegree $c_1$ containing $Z$ is smooth by the Bertini theorem.

%\begin{definition}
%A triple $(a,b,c)\in \ZZ^{\oplus 3}$ is called to be {\it effective} if there exists a globally generated vector bundle of rank $2$ on $Q$ with the Chern classes $c_1=(a,b)$ and $c_2=c$. A non-effective triple is called a {\it gap}.
%\end{definition}

%\begin{remark}
%$ $
%\begin{enumerate}
%\item If $(a,b,c)\in \ZZ^{\oplus 3}$ is effective, then we have $0\le c \le 2ab$.
%\item For $a\ge 0$, a triple $(a,0,c)$ is effective if and only if $c=0$. Similarly $(0,b,c)$ with $b\ge 0$ is effective if and only if $c=0$.
%\end{enumerate}
%\end{remark}

%Note that $\Ee$ is generated by a $4$-dimensional subspace $V\subset H^0(\Ee)$ since we have $\mathrm{rank}(\Ee)+\dim (Q)=4$. So we have an exact sequence
%$$0 \to \Ff^\vee \to V \otimes \Oo_Q \to \Ee \to 0,$$
%where $\Ff$ is a globally generated vector bundle of rank $2$ on $Q$ with the Chern classes $c_1=c_1(\Ee)$ and $c_2=c_1(\Ee)^2-c_2(\Ee)$. $\Ff$ is called the G-dual of $\Ee$. Thus $(a,b,c)$ is effective if and only if $(a,b,2ab-c)$ is effective. Now notice that it is enough to consider the case when $0\le c \le ab$.

\begin{proposition}\label{1.1}
Let $\Ee$ be a globally generated vector bundle of rank $2$ on $Q$ with $c_1=(1,1)$. If $\Ee$ does not split, then $\Ee$ is isomorphic to $\Aa_P:=\phi_P^* (T\PP^2(-1))$, where $\phi _P: Q \to \PP^2$ is the linear projection with the center $P\in \PP^3\setminus Q$.
\end{proposition}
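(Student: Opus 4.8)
The plan is to apply the Serre-type presentation (\ref{eqa3}) to a general section and show that indecomposability forces the second Chern class to be maximal, after which a net of $(1,1)$-forms recovers the center of projection. Concretely, I would fix a general $s\in H^0(\Ee)$ and consider
\[
0\to \Oo_Q \to[s] \Ee \to \Ii_Z(1,1)\to 0,
\]
where $Z$ is reduced, $0$-dimensional of length $c_2$. Since $\Ii_Z(1,1)$ is globally generated, $Z$ lies in the complete intersection of two curves of bidegree $(1,1)$, whence $c_2\le c_1^2=2$. If $c_2=0$ then $Z=\emptyset$ and the sequence is an extension of $\Oo_Q(1,1)$ by $\Oo_Q$; as $\Ext^1(\Oo_Q(1,1),\Oo_Q)=H^1(\Oo_Q(-1,-1))=0$ it splits, contrary to hypothesis. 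So a non-split $\Ee$ has $c_2\in\{1,2\}$.

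Next I would eliminate $c_2=1$. Then $Z=\{p\}$ is one point, and a computation with $0\to\Ii_p\to\Oo_Q\to\Oo_p\to0$ twisted by $\Oo_Q(-1,-1)$, using $H^1(\Oo_Q(-1,-1))=H^2(\Oo_Q(-1,-1))=0$, shows $\Ext^1(\Ii_p(1,1),\Oo_Q)\cong\CC$. On the other hand $\Oo_Q(1,0)\oplus\Oo_Q(0,1)$ has $c_1=(1,1)$ and $c_2=1$, and a general section of it vanishes at a single point, exhibiting it as a \emph{non-trivial} extension of some $\Ii_p(1,1)$ by $\Oo_Q$ (non-trivial because the middle term is locally free while $\Oo_Q\oplus\Ii_p(1,1)$ is not). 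As $\Ext^1$ is one-dimensional, every non-split extension of this shape is isomorphic to $\Oo_Q(1,0)\oplus\Oo_Q(0,1)$, hence splits. Thus a non-split $\Ee$ must have $c_2=2$.

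It remains to treat $c_2=2$ and identify $\Ee$. I would first show $h^0(\Ee)=3$: the formula in Section~2 gives $\chi(\Ee)=3$; Serre duality together with $\Ee^\vee\cong\Ee(-1,-1)$ gives $h^2(\Ee)=h^0(\Ee(-3,-3))=0$ directly from the twisted section sequence; and $h^1(\Ee)=h^1(\Ii_Z(1,1))=0$ because $\Oo_Q(1,1)$ is very ample and so evaluates surjectively onto the length-$2$ scheme $Z$. Surjectivity of $H^0(\Ee)\otimes\Oo_Q\to\Ee$ then produces a line-bundle kernel of first Chern class $(-1,-1)$, namely
\[
0\to \Oo_Q(-1,-1)\to[\psi] \Oo_Q^{\oplus 3}\to \Ee\to 0,
\]
so $\Ee$ is of maximal type. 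The map $\psi$ is a triple $(\sigma_0,\sigma_1,\sigma_2)$ of sections of $\Oo_Q(1,1)$; local freeness of $\Ee$ forces them to have no common zero, hence to be linearly independent, spanning a net $W\subset H^0(\Oo_Q(1,1))=H^0(\PP^3,\Oo_{\PP^3}(1))$. Such a net is the family of hyperplanes through a single point $P\in\PP^3$, and base-point-freeness on $Q$ is equivalent to $P\notin Q$. I would then match the displayed sequence with the pullback along $\phi_P$ of the Euler sequence $0\to\Oo_{\PP^2}(-1)\to\Oo_{\PP^2}^{\oplus3}\to T\PP^2(-1)\to0$, using $\phi_P^*\Oo_{\PP^2}(1)=\Oo_Q(1,1)$ and $\phi_P^*H^0(\Oo_{\PP^2}(1))=W$, to conclude $\Ee\cong\phi_P^*(T\PP^2(-1))=\Aa_P$.

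The step I expect to be most delicate is the identification at the end: one must verify that a maximal-type bundle is determined up to isomorphism by the net $W$ (equivalently by the $GL_3\times GL_1$-orbit of $\psi$), and that this net is precisely the linear system defining the projection from $P$, so that the two presentations coincide. The $c_2=1$ elimination is the other subtle point, since there the failure of indecomposability has to be extracted from the one-dimensionality of $\Ext^1(\Ii_p(1,1),\Oo_Q)$ rather than from an obvious splitting.
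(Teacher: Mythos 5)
Your proposal is correct, and it reaches the conclusion by a genuinely different route than the paper. The paper organizes the case analysis by the index of $\Ee$: if $(1,1)$, $(1,0)$ or $(0,1)$ is an index, the corresponding exact sequence forces a splitting; if the index is $(0,0)$, the degree bound forces $\deg (Z)=2$, so $Z$ is a complete intersection of two hyperplane sections and the Koszul resolution yields $0\to \Oo_Q(-1,-1)\to \Oo_Q^{\oplus 3}\to \Ee \to 0$. You instead organize by $c_2$, disposing of $c_2=1$ via the one-dimensionality of $\Ext^1(\Ii_p(1,1),\Oo_Q)$ together with the observation that $\Oo_Q(1,0)\oplus \Oo_Q(0,1)$ realizes the unique non-trivial (because locally free) extension for every $p$; the paper's index argument handles this case more directly, but your computation is sound. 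The real divergence is at the end: having obtained the resolution, you read off a base-point-free net $W\subset H^0(\Oo_Q(1,1))\cong H^0(\Oo_{\PP^3}(1))$, identify it with the hyperplanes through a point $P\notin Q$, and match the presentation with the pulled-back Euler sequence, pinning down $\Ee\cong \Aa_P$ by a change of basis of $W$. The paper instead notes that $\Ee$ is stable with $c_1=(1,1)$, $c_2=2$, quotes \cite{huh2} for the moduli space being $\PP^3\setminus Q$, and checks via jumping conics that the $\Aa_P$ are pairwise non-isomorphic, hence exhaust that moduli space. Your route is more self-contained and constructive (no appeal to the moduli space), at the small cost of verifying that the three sections defining $\psi$ are linearly independent and without common zeros; the paper's route yields the parametrization of all such bundles by $\PP^3\setminus Q$ as a byproduct.
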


\begin{proof}
By Proposition \ref{prop1}, we can assume that $H^0(\Ee(-1,-1))=0$. If $(1,0)$ is an index of $\Ee$, then we have the sequence
$$0 \to \Oo_Q(1,0) \to \Ee \to \Ii_{Z'} (0,1) \to 0,$$
where $Z'$ is a 0-dimensional subscheme of $Q$ whose length is $c_2-1$. Since $\Ii_{Z'}(0,1)$ is globally generated, $Z'$ is contained in the complete intersection of two hypersurfaces of bidegree $(0,1)$. Thus $Z'$ is an empty set and so $\Ee$ is isomorphic to $\Oo_Q(1,0)\oplus \Oo_Q(0,1)$. We obtain the same answer when $(0,1)$ is an index of $\Ee$.

Now assume that $(0,0)$ is the index of $\Ee$ and then $\Ee$ fits into the sequence (\ref{eqa3}) with $c_1=(1,1)$ and so $\Ii_Z(1,1)$ is globally generated. In particular, $Z$ is contained in the complete intersection of two hyperplane sections of $Q$ and so $\deg (Z)\leq 2$. If $\deg (Z)\leq 1$, then $h^0(\Ee(-1,0))$ or $h^0(\Ee (0,-1))$ would be nonzero, a contradiction. So we have $\deg (Z)=2$ and $Z$ is the complete intersection, not on any ruling of $Q$. From the locally free resolution of $\Ii_Z$
$$0\to \Oo_Q(-2,-2) \to \Oo_Q(-1,-1)^{\oplus 2} \to \Ii_Z \to 0,$$
we obtain the locally free resolution of $\Ee$ :
$$0\to \Oo_Q(-1,-1) \to \Oo_Q^{\oplus 3} \to \Ee \to 0.$$
In particular, $\Ee$ is stable with respect to $\Oo_Q(1,1)$. Conversely, any stable vector bundle $\Ee$ of rank $2$ on $Q$ with the Chern classes $c_1=(1,1)$ and $c_2=2$ is globally generated, since it fits into the sequence
$$0\to \Oo_Q \to \Ee \to \Ii_Z (1,1) \to 0$$
with a $0$-dimensional subscheme $Z$ of length $2$. Note that the moduli space was shown to be isomorphic to $\PP^3 \setminus Q$ in \cite{huh2} using the set of jumping conics. Thus it is enough to prove that $\Aa_P$ is not isomorphic to $\Aa_O$ if $P\ne O$. It is true since the set of jumping conics of $\Aa_P$ is the hyperplane in $(\PP^3)^\vee$ corresponding to the point $P\in \PP^3$.
\end{proof}

\begin{remark}
Without confusion, we simply denote $\Aa_P$ by $\Aa$. The bundle $\Aa$ is of maximal type and so its extension by $\Oo_Q^{\oplus (r-2)}$ is also of maximal type. Since $h^1(\Aa^\vee)=1$, so the only possible extension of $\Aa$ by $\Oo_Q^{\oplus (r-2)}$ without any trivial factor happens when $r=3$. Later we will see that such bundle is always isomorphic to $T\PP^3(-1)|_Q$ for any $P\in \PP^3\setminus Q$.
\end{remark}

\begin{proposition}\label{prop1.2}
Let $\Ee$ be a globally generated vector bundle of rank $2$ on $Q$ with $c_1=(1,2)$. If $\Ee$ does not split, then it is one of the following:
\begin{enumerate}
\item $(c_2=2) :  0\to \Oo_Q \to \Oo_Q(1,0)\oplus \Oo_Q(0,1)^{\oplus 2} \to \Ee \to 0$
\item $(c_2=3) : 0\to \Oo_Q(-1,-1) \to \Oo_Q(0,1)\oplus \Oo_Q^{\oplus 2} \to \Ee \to 0$
\item $(c_2=4) : 0\to \Oo_Q(-1,-2) \to \Oo_Q^{\oplus 3} \to \Ee \to 0$
\end{enumerate}
Similar answer can be given when $c_1=(2,1)$.
\end{proposition}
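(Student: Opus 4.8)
The plan is to adapt the argument of Proposition \ref{1.1}: analyse $\Ee$ through a distinguished sub-line-bundle, identify the associated $0$-dimensional scheme as a complete intersection, and then read off the resolution by the horseshoe lemma. By Proposition \ref{prop1} I may assume $H^0(\Ee(-1,-2))=0$, so that no splitting with an $\Oo_Q(1,2)$-factor occurs. From the section sequence (\ref{eqa3}) and the global generation of $\Ii_Z(1,2)$ one gets $c_2=\deg Z\le c_1^2=4$, so only $c_2\in\{0,1,2,3,4\}$ can occur. The value $c_2=0$ is discarded at once, since then (\ref{eqa3}) is an extension of $\Oo_Q(1,2)$ by $\Oo_Q$ classified by $\Ext^1(\Oo_Q(1,2),\Oo_Q)=H^1(\Oo_Q(-1,-2))=0$, which forces $\Ee$ to split.

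For $1\le c_2\le 3$ the crucial step is to show that $(0,1)$ is an index. Using the Chern-class formulas of Section 2 one computes $\chi(\Ee(0,-1))=4-c_2>0$, while Serre duality gives $h^2(\Ee(0,-1))=h^0(\Ee^\vee(0,1)\otimes\omega_Q)=h^0(\Ee(-3,-3))=0$, because a nonzero section there would multiply (by a section of $\Oo_Q(2,1)$) up to a nonzero section of $\Ee(-1,-2)$; hence $h^0(\Ee(0,-1))>0$ and $(0,1)$ lies in the index set. Any index $(m,n)$ must satisfy $(m,n)\le(1,2)$, since otherwise the quotient of $\Ee$ by the saturation of the corresponding map would be a globally generated twisted ideal sheaf with a negative twist. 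The only elements strictly above $(0,1)$ that remain are $(0,2),(1,1),(1,2)$: the last is excluded by our assumption, and each of the first two is then maximal and would yield a globally generated quotient $\Ii_{Z'}(1,0)$ or $\Ii_{Z'}(0,1)$ whose two defining curves meet in a scheme of length $0$, so $Z'=\emptyset$ and $\Ee\cong\Oo_Q(0,2)\oplus\Oo_Q(1,0)$ or $\Ee\cong\Oo_Q(1,1)\oplus\Oo_Q(0,1)$ (the relevant groups $H^1(\Oo_Q(-1,2))$ and $H^1(\Oo_Q(1,0))$ both vanish), contradicting non-splitness. Thus $(0,1)$ is an index and I obtain
$$0\to\Oo_Q(0,1)\to\Ee\to\Ii_{Z'}(1,1)\to0,\qquad \deg Z'=c_2-1.$$

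Now I would finish case by case. When $c_2=1$ we have $Z'=\emptyset$, so $\Ee\cong\Oo_Q(0,1)\oplus\Oo_Q(1,1)$ splits and nothing new appears. When $c_2\in\{2,3\}$ the sheaf $\Ii_{Z'}(1,1)$ is globally generated with $\deg Z'=c_2-1\le 2=(1,1)^2$, so $Z'$ is a single reduced point when $c_2=2$ and a complete intersection of two $(1,1)$-curves when $c_2=3$; in both cases $\Ii_{Z'}$ admits the explicit length-two resolution $0\to\Oo_Q(-1,-1)\to\Oo_Q(-1,0)\oplus\Oo_Q(0,-1)\to\Ii_{Z'}\to0$ and $0\to\Oo_Q(-2,-2)\to\Oo_Q(-1,-1)^{\oplus2}\to\Ii_{Z'}\to0$ respectively. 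Twisting by $(1,1)$ and applying the horseshoe lemma to the displayed extension — the required liftings exist because $H^1(\Oo_Q)=H^1(\Oo_Q(-1,1))=H^1(\Oo_Q(0,1))=0$ — produces exactly resolutions (1) and (2). For the maximal value $c_2=4$ I instead use (\ref{eqa3}) directly: here $\deg Z=4=c_1^2$ forces $Z$ to be the complete intersection of two $(1,2)$-curves, so the Koszul resolution twisted by $(1,2)$ reads $0\to\Oo_Q(-1,-2)\to\Oo_Q^{\oplus2}\to\Ii_Z(1,2)\to0$, and splicing it with (\ref{eqa3}) by the horseshoe lemma (lifting is automatic since $\Ext^1(\Oo_Q^{\oplus2},\Oo_Q)=0$) gives $0\to\Oo_Q(-1,-2)\to\Oo_Q^{\oplus3}\to\Ee\to0$, i.e. resolution (3). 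The statement for $c_1=(2,1)$ follows by interchanging the two rulings of $Q$.

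I expect the main obstacle to be the index bookkeeping in the second step: proving that $(0,1)$ is genuinely an index (the Riemann--Roch estimate together with the $h^2$-vanishing) while simultaneously excluding the dominating indices $(0,2)$ and $(1,1)$, since it is precisely this dichotomy that both selects the non-split bundles and pins down the $\Oo_Q(0,1)$-summand in the middle terms of (1) and (2). A secondary point requiring care is the verification that the schemes $Z'$ (and $Z$ when $c_2=4$) are honest complete intersections, with no component shared by the two generating curves, as this is what legitimizes feeding the Koszul resolutions into the horseshoe lemma.
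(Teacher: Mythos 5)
Your proof is correct and follows essentially the same route as the paper's: isolate the complete-intersection case $c_2=4$, and otherwise show that $(0,1)$ is an index, rule out the dominating indices $(0,2)$, $(1,1)$, $(1,2)$ via the vanishing of the relevant $\Ext^1$ groups, and splice the Koszul resolution of $\Ii_{Z'}$ into the extension by $\Oo_Q(0,1)$. The only (harmless) difference is that you produce the section of $\Ee(0,-1)$ by Riemann--Roch plus Serre duality, whereas the paper obtains it directly from $h^0(\Ii_Z(1,1))>0$ when $\deg (Z)\le 3$.
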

\begin{proof}
$\Ee$ fits into the sequence (\ref{eqa3}) with $c_1=(1,2)$. Since $\Ii_Z(1,2)$ is globally generated, $Z$ is contained in a complete intersection of two hypersurfaces of bidegree $(1,2)$ in $Q$. In particular, $\deg (Z)$ is at most 4. If $\deg (Z)=4$, then $Z$ is the complete intersection and so $\Ii_Z$ admits a locally free resolution:
$$0\to \Oo_Q(-2,-4) \to \Oo_Q(-1,-2)^{\oplus 2} \to \Ii_Z \to 0.$$
It induces a locally free resolution (3) in the list.

Assume that $\deg (Z)\leq 3$ and so $h^0(\Ii_Z(1,1))\not= 0$. In particular, we have $h^0(\Ee(0,-1))\not= 0$ and so an index of $\Ee$ is $(a,b)$ with $b\ge 1$ and $\Ee$ fits into the sequence
$$0\to \Oo_Q(a,b) \to \Ee \to \Ii_{Z'}(1-a,2-b) \to 0.$$
If either $a=1$ or $b=2$, then $Z'$ is empty and so we have $\Ee \simeq \Oo_Q(a,b)\oplus \Oo_Q(1-a,2-b)$. Now let us assume that $(a,b)=(0,1)$. Since $\Ii_{Z'}(1,1)$ is globally generated, $Z'$ is contained in a complete intersection of two hypersurfaces of bidegree $(1,1)$ in $Q$ and so $\deg (Z')\leq 2$. If $Z'$ is an empty set, then $\Ee$ is isomorphic to $\Oo_Q(0,1)\oplus \Oo_Q(1,1)$. If $\deg (Z')=2$, then $Z'$ is the complete intersection and so $\Ii_{Z'}$ admits a locally free resolution:
$$0\to \Oo_Q(-2,-2) \to \Oo_Q(-1,-1)^{\oplus 2} \to \Ii_{Z'} \to 0.$$
It induces a locally free resolution (2) in the list. If $\deg (Z')=1$, then $Z'$ is a complete intersection of two hypersurfaces of bidegree $(1,0)$ and $(0,1)$ and so $\Ii_{Z'}$ admits a locally free resolution:
$$0\to \Oo_Q(-1,-1)\to \Oo_Q(-1,0)\oplus \Oo_Q(0,-1) \to \Ii_{Z'} \to 0.$$
It induces the resolution (1) in the list.
\end{proof}

\begin{remark}
When we combine the exact sequence
$$0\to \Oo_Q(0,-1) \to \Oo_Q^{\oplus 2} \to \Oo_Q(0,1) \to 0$$
to the resolution (2) in the list, we have
\begin{equation}
0\to \Oo_Q(-1,-1)\oplus \Oo_Q(0,-1) \to \Oo_Q^{\oplus 4} \to \Ee \to 0.
\end{equation}
Similarly, the resolution (1) becomes the sequence,
\begin{equation}
0\to \Oo_Q(0,-1)^{\oplus 2} \oplus \Oo_Q(-1,0)\to \Oo_Q^{\oplus 5} \to \Ee \to 0.
\end{equation}
\end{remark}

\section{Case $c_1=(2,2)$}

Let $\mathfrak{M}(k)$ be the moduli space of stable vector bundles of rank $2$ on $Q$ with the Chern classes $c_1=(2,2)$ and $c_2=k$ with respect to the ample line bundle $\Oo_Q(1,1)$. By the Bogomolov inequality, the space is empty if $k<2$. In \cite{Soberon}, it was proven to be a smooth, rational and irreducible variety of dimension $4k-11$.

In this section, we assume that $\Ee$ is a globally generated vector bundle of rank $2$ on $Q$ with $c_1(\Ee)=(2,2)$. If $(2,2)$ is the index of $\Ee$, then we have $\Ee \cong \Oo_Q\oplus \Oo_Q(2,2)$ by Proposition \ref{prop1}. If $(2,1)$ is an index of $\Ee$, then we have
$$0\to \Oo_Q(2,1) \to \Ee \to \Ii_Z(0,1) \to 0.$$
Since $\Ii_Z(0,1)$ is globally generated, so $Z$ is an empty set and we have $\Ee \cong \Oo_Q(2,1)\oplus \Oo_Q(0,1)$. In general, if $(a,b)$ is an index of $\Ee$ with $a+b\ge 3$, then we have $\Ee \cong \Oo_Q(a,b)\oplus \Oo_Q(2-a,2-b)$.

\begin{lemma}\label{nost}
If $(0,2)$ is an index of a non-splitting bundle $\Ee$, then $\Ee$  arises in the following  extension:
 $$0\to \Oo_Q(0,2) \to \Ee \to \Oo_Q(2,0) \to  0.$$
A similar result holds for the case of index $(2,0)$.
\end{lemma}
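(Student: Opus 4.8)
The plan is to extract the subbundle inclusion $\Oo_Q(0,2)\hookrightarrow \Ee$ directly from the index and then to prove that the corresponding quotient is the honest line bundle $\Oo_Q(2,0)$ rather than a twisted ideal sheaf. Since $(0,2)$ is an index of $\Ee$, we have $h^0(\Ee(0,-2))\ne 0$, so there is a nonzero section $s\in H^0(\Ee(0,-2))$, equivalently a nonzero map $\Oo_Q(0,2)\to \Ee$. First I would analyze the zero scheme of $s$: writing its divisorial part as an effective divisor $D$ of bidegree $(d_1,d_2)$, and using that every effective divisor on $Q$ has nonnegative bidegree (so $d_1,d_2\ge 0$), the section factors as $\Oo_Q(0,2)\hookrightarrow \Oo_Q(d_1,d_2+2)\hookrightarrow \Ee$. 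If $D\ne 0$ this produces a nonzero element of $H^0(\Ee(-d_1,-(d_2+2)))$ with $(d_1,d_2+2)>(0,2)$, contradicting the maximality built into the index $(0,2)$. Hence $D=0$, the section $s$ vanishes in codimension two, and the standard construction on the smooth surface $Q$ yields
$$0\to \Oo_Q(0,2)\to \Ee \to \Ii_Z(2,0)\to 0$$
for a $0$-dimensional subscheme $Z\subset Q$ (the zero scheme of $s$), since $c_1(\Ee)-(0,2)=(2,0)$.

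The second step is to show $Z=\emptyset$. As a quotient of the globally generated bundle $\Ee$, the sheaf $\Ii_Z(2,0)$ is globally generated. The key observation is that $H^0(\Ii_Z(2,0))\subseteq H^0(\Oo_Q(2,0))$, and that $\Oo_Q(2,0)=f^\ast\Oo_{\PP^1}(2)$ restricts trivially to every fiber $F=\{p\}\times\PP^1$ of the first projection $f$. Thus if $Z$ had a point on some fiber $F$, every global section of $\Ii_Z(2,0)$ would restrict to a section of $\Oo_F\cong\Oo_{\PP^1}$ vanishing at that point, hence vanish identically on $F$; then $\Ii_Z(2,0)$ would fail to be globally generated at any point of $F\setminus Z$, a contradiction. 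Therefore $Z=\emptyset$ and the sequence becomes the desired extension
$$0\to \Oo_Q(0,2)\to \Ee \to \Oo_Q(2,0)\to 0.$$

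I expect the main obstacle to be the first step, namely verifying cleanly that the chosen section has no divisorial zeros: one must combine the maximality in the definition of an index with the fact that every effective divisor on $Q$ has nonnegative bidegree, so that a nontrivial divisorial component would strictly enlarge the index. The global generation argument in the second step is then essentially forced by the one-dimensionality of $\Oo_Q(2,0)$ along the second ruling. The non-splitting hypothesis is not actually needed to produce the sequence; it merely records that in the case of interest the resulting extension is non-split, and the symmetric statement for index $(2,0)$ follows by exchanging the two rulings.
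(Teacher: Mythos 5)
Your proposal is correct and follows essentially the same route as the paper: produce the exact sequence $0\to \Oo_Q(0,2)\to \Ee\to \Ii_Z(2,0)\to 0$ from the index and then use global generation of $\Ii_Z(2,0)$ (members of $|\Oo_Q(2,0)|$ being unions of fibers of the first projection) to conclude $Z=\emptyset$. You merely spell out two steps the paper leaves implicit, namely that maximality of the index kills the divisorial part of the section's zero locus, and the fiberwise argument forcing $\deg(Z)=0$.
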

\begin{proof}
We have the following exact sequence
$$0\to \Oo_Q(0,2) \to \Ee \to \Ii_Z(2,0) \to 0$$
and since $\Ii_Z(2,0)$ is globally generated, we have $\deg (Z)=0$. So $\Ii_Z$ is a line bundle and we get the claimed extension. Since $\dim(\Ext^1( \Oo_Q(2,0), \Oo_Q(0,2)))=3$ the bundle $\Ee$ may be indecomposable.
\end{proof}

\begin{lemma}\label{le1}
If $(1,1)$ is an index of non-splitting $\Ee$, then $\Ee$ satisfies one of the following:
\begin{enumerate}
\item $(c_2=3) : 0\to \Oo_Q \to \Oo_Q(1,1)\oplus \Oo_Q(1,0)\oplus \Oo_Q(0,1) \to \Ee \to 0$\item $(c_2=4) : 0\to \Oo_Q(-1,-1) \to \Oo_Q^{\oplus 2} \oplus \Oo_Q(1,1) \to \Ee \to 0$.
\end{enumerate}
\end{lemma}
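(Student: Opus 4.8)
The plan is to run the same index/ideal-sheaf analysis used in Proposition \ref{prop1.2} and Lemma \ref{nost}, now with the twist $(1,1)$. Since $(1,1)$ is an index, $h^0(\Ee(-1,-1))\neq 0$, and a nonzero section gives a map $\Oo_Q(1,1)\to \Ee$. First I would check that this map is a subbundle inclusion off a $0$-dimensional locus: if it vanished on an effective divisor $D\in|\Oo_Q(d_1,d_2)|$ with $(d_1,d_2)\geq (0,0)$ nonzero, then $h^0(\Ee(-1-d_1,-1-d_2))\neq 0$ with $(1+d_1,1+d_2)>(1,1)$, contradicting the maximality of the index $(1,1)$. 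Hence I obtain
\[
0\to \Oo_Q(1,1)\to \Ee \to \Ii_Z(1,1)\to 0
\]
with $Z$ a $0$-dimensional subscheme, and a Chern class computation gives $c_2(\Ee)=2+\deg(Z)$. Because $\Ii_Z(1,1)$ is a quotient of the globally generated $\Ee$, it is itself globally generated, so $Z$ lies in the complete intersection of two $(1,1)$-curves and $\deg(Z)\leq 2$. If $\deg(Z)=0$ the sequence is an extension of $\Oo_Q(1,1)$ by itself, hence splits since $\Ext^1(\Oo_Q(1,1),\Oo_Q(1,1))=H^1(\Oo_Q)=0$; this is excluded by the non-splitting hypothesis. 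So $\deg(Z)\in\{1,2\}$, corresponding to $c_2=3$ and $c_2=4$.

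For $\deg(Z)=2$ I would argue that $Z$ is exactly the complete intersection of two $(1,1)$-curves: global generation of $\Ii_Z(1,1)$ forces its base locus into $Z$, so two general sections meet in a $0$-dimensional scheme of degree $(1,1)^2=2$ containing $Z$, whence equality. This is the step that rules out $Z$ lying on a single ruling, where $\Ii_Z(1,1)$ would fail to be globally generated along that ruling. The Koszul resolution of such a $Z$, twisted by $(1,1)$, reads
\[
0\to \Oo_Q(-1,-1)\to \Oo_Q^{\oplus 2}\to \Ii_Z(1,1)\to 0.
\]
For $\deg(Z)=1$, $Z$ is a reduced point, the intersection of a $(1,0)$-ruling and a $(0,1)$-ruling, so its ideal sheaf twisted by $(1,1)$ is resolved by
\[
0\to \Oo_Q\to \Oo_Q(1,0)\oplus \Oo_Q(0,1)\to \Ii_Z(1,1)\to 0.
\]

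In both cases I would splice the displayed resolution $0\to K\to P\to \Ii_Z(1,1)\to 0$ with the sequence for $\Ee$ by forming the fiber product $W=\Ee\times_{\Ii_Z(1,1)}P$. This yields two short exact sequences
\[
0\to K\to W\to \Ee\to 0,\qquad 0\to \Oo_Q(1,1)\to W\to P\to 0.
\]
The second splits because $\Ext^1(P,\Oo_Q(1,1))=0$ (here $H^1(\Oo_Q(1,1))=H^1(\Oo_Q(1,0))=H^1(\Oo_Q(0,1))=0$), so $W\cong \Oo_Q(1,1)\oplus P$, and the first sequence becomes $0\to K\to \Oo_Q(1,1)\oplus P\to \Ee\to 0$. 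Substituting $(K,P)=(\Oo_Q(-1,-1),\Oo_Q^{\oplus 2})$ gives resolution (2), and $(K,P)=(\Oo_Q,\Oo_Q(1,0)\oplus \Oo_Q(0,1))$ gives resolution (1). The main obstacle is the pair of structural facts about $Z$ established in the first two paragraphs: using maximality of the index to keep $Z$ zero-dimensional, and using global generation to pin down $Z$ as a genuine complete intersection rather than a subscheme of a ruling. Once these are in hand, the fiber-product splicing and the vanishing of the relevant $\Ext^1$ groups are routine.
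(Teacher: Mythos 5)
Your proposal is correct and follows essentially the same route as the paper: use the index $(1,1)$ to produce $0\to\Oo_Q(1,1)\to\Ee\to\Ii_Z(1,1)\to 0$, bound $\deg(Z)\le 2$ by global generation, discard $\deg(Z)=0$ as the split case, and splice in the Koszul resolutions of $\Ii_Z$ for $\deg(Z)=1,2$. The only difference is presentational — you spell out the fiber-product splicing and the complete-intersection structure of $Z$, which the paper leaves implicit (the paper also records, beyond the stated implication, that the resulting bundles are globally generated).
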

\begin{proof}
We have the following exact sequence
$$0\to \Oo_Q(1,1) \to \Ee \to \Ii_Z(1,1) \to 0$$
and since $\Ii_Z(1,1)$ is globally generated, we have $\deg (Z)\le 2$. If $\deg (Z)=0$, then we have $\Ee \cong \Oo_Q(1,1)^{\oplus 2}$. If $\deg (Z)=1$, say $Z=\{P\}$, then we have
$$h^1(\Ii_P)=h^2(\Ii_P(-1,-1))=0$$
and so $\Ii_P(1,1)$ is globally generated due to Castelnuovo-Mumford criterion. Since $h^1(\Oo_Q(1,1))=0$, so $\Ee$ is globally generated. Note that $\Ii_P$ admits the following resolution
$$0\to \Oo_Q(-1,-1) \to \Oo_Q(-1,0)\oplus \Oo_Q(0,-1) \to \Ii_P\to 0$$
and so $\Ee$ admits the following resolution (1). Similarly when $\deg (Z)=2$, we obtain the resolution (2).
\end{proof}

If $\Ee$ has an index $(a,b)$ with $a+b\le 1$, then $\Ee$ fits into the sequence
$$0\to \Oo_Q(a,b) \to \Ee \to \Ii_{Z}(2-a,2-b) \to 0.$$
Since $\Ii_Z(2-a,2-b)$ is globally generated, so we have $\deg (Z)\le 2(2-a)(2-b)$. Thus we have $c_2=c_2(\Ee)=\deg (Z)+a(2-b)+b(2-a)\le 8-2(a+b)$. Since $\Ee$ is stable, we have $\Ee \in \mathfrak{M}(c_2)$.

\begin{lemma}\label{le2}
Let $\Ee$ be a globally generated vector bundle of rank $2$ on $Q$ with $c_1=(2,2)$ and index $(1,0)$. Then $\Ee$ is in $\mathfrak{M}(c_2)$ with $4\le c_2 \le 6$. In fact, we have the following:
\begin{enumerate}
\item Every bundle $\Ee \in \mathfrak{M}(4)$ is globally generated.
\item A general bundle in $\mathfrak{M}(5)$ is globally generated.
\item If $c_2(\Ee)=6$, then it fits into the sequence
$$0\to \Oo_Q(-1,-2) \to \Oo_Q^{\oplus 2} \oplus \Oo_Q(1,0) \to \Ee \to 0.$$
\end{enumerate}
Similar answer can be given when $(0,1)$ is an index of $\Ee$.
\end{lemma}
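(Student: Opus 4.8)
The plan is to analyze the index sequence, combine a Riemann--Roch count with the global generation criterion of Theorem \ref{gg}, and treat $c_2=4,5,6$ separately. By the discussion preceding the lemma, the index $(1,0)$ places $\Ee$ in
\[
0\to \Oo_Q(1,0) \to \Ee \to \Ii_Z(1,2) \to 0,
\]
with $c_2=\deg Z+2$, with $\deg Z\le 4$ (so $c_2\le 6$), and with $\Ee$ stable, i.e. $\Ee\in\mathfrak{M}(c_2)$. First I would establish the lower bound $c_2\ge4$, i.e. $\deg Z\ge 2$. If $\deg Z=0$ then $\Ee\cong\Oo_Q(1,0)\oplus\Oo_Q(1,2)$, so $(1,2)$ is an index; if $\deg Z=1$, say $Z=\{P\}$, then twisting the sequence by $(-1,-1)$ and using $h^0(\Ii_P(0,1))=1$ gives $h^0(\Ee(-1,-1))\ne0$, so $(1,1)$ is an index. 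Either conclusion contradicts maximality of $(1,0)$ (equivalently stability, as a slope-$2$ subbundle cannot destabilize a stable bundle). Hence $2\le\deg Z\le4$ and $4\le c_2\le6$.

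For (1), let $\Ee\in\mathfrak{M}(4)$, so $\deg Z=2$. Serre duality together with $\Ee^\vee\cong\Ee(-2,-2)$ gives $H^2(\Ee(-1,-1))=0$, and likewise $H^2(\Ee(-1,0))=H^2(\Ee(0,-1))=0$, by stability. A Riemann--Roch computation yields $\chi(\Ee(-1,0))=2$; in particular $h^0(\Ee(-1,0))\ge2>0$, which, with the stability bound $a+b\le1$ on any index, shows that every member of $\mathfrak{M}(4)$ really does have index $(1,0)$, so this case exhausts $\mathfrak{M}(4)$. Twisting the index sequence by $(-1,0)$ gives $h^0(\Ee(-1,0))=1+h^0(\Ii_Z(0,2))$; were the two points of $Z$ to impose only one condition on $|\Oo_Q(0,2)|$ they would impose one condition on $|\Oo_Q(0,1)|$ as well, forcing $h^0(\Ee(-1,-1))\ne0$ and the forbidden index $(1,1)$. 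Thus $h^0(\Ii_Z(0,2))=1$, so $h^0(\Ee(-1,0))=2$ and $h^1(\Ee(-1,0))=0$. Since $\Oo_Q(1,1)$ is very ample the two points impose independent conditions on $|\Oo_Q(1,1)|$, giving $h^0(\Ee(0,-1))=h^0(\Ii_Z(1,1))=2=\chi(\Ee(0,-1))$ and hence $h^1(\Ee(0,-1))=0$. Theorem \ref{gg} then shows $\Ee$ is globally generated.

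For (2) the same scheme applies with $\deg Z=3$ and $\chi(\Ee(-1,0))=1$: when $Z$ consists of three points in general position one gets $h^0(\Ii_Z(0,2))=0$ and $h^0(\Ii_Z(1,1))=1$, hence $h^1(\Ee(-1,0))=h^1(\Ee(0,-1))=0$ and Theorem \ref{gg} gives global generation. Since global generation is open on the irreducible space $\mathfrak{M}(5)$, it then suffices to exhibit one such bundle, which I would do by forming a general extension of $\Ii_Z(1,2)$ by $\Oo_Q(1,0)$ with $Z$ three general points and checking it is locally free and stable; a general member of $\mathfrak{M}(5)$ is therefore globally generated (while on the special locus, where two of the three points share a coordinate, the criterion fails). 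For (3), $\deg Z=4$ equals the degree of the complete intersection of two curves of bidegree $(1,2)$, so $Z$ is that complete intersection and $\Ii_Z(1,2)$ has the Koszul resolution $0\to\Oo_Q(-1,-2)\to\Oo_Q^{\oplus 2}\to\Ii_Z(1,2)\to0$. Lifting $\Oo_Q^{\oplus 2}\to\Ii_Z(1,2)$ along $\Ee\to\Ii_Z(1,2)$ (possible because $H^1(\Oo_Q(1,0))=0$) and combining with $\Oo_Q(1,0)\hookrightarrow\Ee$ produces a surjection $\Oo_Q^{\oplus 2}\oplus\Oo_Q(1,0)\to\Ee$, whose kernel a Chern class count identifies as $\Oo_Q(-1,-2)$, giving the asserted sequence.

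The main obstacle I anticipate is the reverse direction in (2): proving that the extension of $\Ii_Z(1,2)$ by $\Oo_Q(1,0)$ attached to three general points is locally free and stable (a Cayley--Bacharach/Serre-duality check), so that the globally generated locus of $\mathfrak{M}(5)$ is genuinely nonempty. The remaining effort is the careful bookkeeping of the conditions imposed by $Z$ on $|\Oo_Q(0,2)|$ and $|\Oo_Q(1,1)|$, which is precisely what pins down when the relevant $h^1$ vanishes; the symmetric statements for index $(0,1)$ follow by interchanging the two factors of $Q$.
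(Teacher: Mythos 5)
Your proof is correct, and for parts (1) and (3) it follows essentially the same route as the paper: the index sequence $0\to \Oo_Q(1,0)\to\Ee\to\Ii_Z(1,2)\to0$, the bound $2\le\deg(Z)\le4$ obtained exactly as you do, the observation that $h^0(\Ii_Z(0,1))\ne0$ would force the forbidden index $(1,1)$, and an application of Theorem \ref{gg} (the paper applies it to $\Ii_Z(1,2)$ and then passes to $\Ee$ using $h^1(\Oo_Q(1,0))=0$, while you apply it to $\Ee$ directly -- a cosmetic difference). The genuine divergence is in part (2). The paper does not argue via three general points and openness; instead it fixes a smooth curve $C\in|\Oo_Q(1,2)|$ (a rational normal cubic in $\PP^3$), takes \emph{any} degree-$5$ subscheme $Z'\subset C$, notes that $\Ii_{Z',C}(2)$ has degree $1$ on $C\cong\PP^1$ and is therefore globally generated, and lifts this to global generation of $\Ii_{Z'}(2,2)$ via surjectivity of $H^0(\Oo_Q(2,2))\to H^0(C,\Oo_C(2))$; the bundles in $0\to\Oo_Q\to\Ee\to\Ii_{Z'}(2,2)\to0$ are then globally generated. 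That construction produces an explicit large family of globally generated members of $\mathfrak{M}(5)$ with no semicontinuity argument needed. Your version stays inside the index-$(1,0)$ framework, checks the hypotheses of Theorem \ref{gg} for three general points, and then invokes irreducibility of $\mathfrak{M}(5)$ plus openness of global generation; the existence step you flag as a possible obstacle is in fact harmless, since the Cayley--Bacharach condition governing local freeness of the extension of $\Ii_Z(1,2)$ by $\Oo_Q(1,0)$ is taken with respect to $|\Oo_Q(-2,0)|=\emptyset$ and is vacuous, and stability for general $Z$ follows from the vanishings $h^0(\Ii_Z(0,1))=h^0(\Ii_Z(1,0))=0$. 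Both routes are sound; the paper's is more constructive, yours is more uniform with the $c_2=4$ case.
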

\begin{proof}
Let $\Ee$ be an arbitrary bundle with index $(1,0)$ and so fitting into the sequence
\begin{equation}\label{11}
0\to \Oo_Q(1,0) \to \Ee \to \Ii_Z(1,2) \to 0
\end{equation}
with $\deg (Z)\le 4$. If $\deg (Z)\le 1$, then we obtain $h^0(\Ii_Z(0,1))\ne 0$ and so $h^0(\Ee(-1,-1))\ne 0$. It is a contradiction to the assumption that $(1,0)$ is an index. Thus we have $\deg (Z)\ge 2$. Assume that $\deg (Z)=2$. If $H^0(\Ii_Z(0,1)) \ne 0$, then $h^0(\Ee(-1,-1))\ne 0$ and it is contradicting to the assumption that $(1,0)$ is an index. Thus $h^0(\Ii_Z(0,1))=0$ and so $h^0(\Ii_Z(0,2))=1$. It implies that $h^1(\Ii_Z(0,2))=0$. We can also prove that $h^1(\Ii_Z(1,1))=h^2(\Ii_Z(0,1))=0$. Thus $\Ii_Z(1,2)$ is globally generated by Theorem \ref{gg}. From the exact sequence, we also obtain the stability of $\Ee$. Now let us pick a bundle $\Ee$ from $\mathfrak{M}(4)$. From the stability condition, we have
$$h^0(\Ee (-1,-1))=h^0(\Ee(0,-2))=h^0(\Ee(-2,0))=0.$$
Since $\chi (\Ee(-1,0))=2$ and $h^2(\Ee(-1,0))=0$, we have the sequence (\ref{11}) with $Z$ of length $2$ such that $h^0(\Ii_Z(0,1))=0$. As before, it implies the globally generatedness of $\Ee$.

Let us assume that $\deg (Z)=3$ and in particular we have $c_2(\Ee)=5$. Let $C\subset Q$ be a smooth curve of type $(1,2)$ and then it is a rational normal curve of $\PP ^3$. Let $Z'\subset C$ be any zero-dimensional subscheme of degree $5$. Since $C\cong \PP^1$ and $\deg (\Ii_{Z',C}(2))=1$, $\Ii_{Z',C}(2)$ is globally generated. It implies that the sheaf $\Ii_{Z'}(2,2)$ is globally generated since the restriction map $H^0(\Oo_Q(2,2)) \to H^0(C, \Oo_C(2))$ is surjective. Thus the bundles fitting into the sequence
$$0\to \Oo_Q \to \Ee \to \Ii_{Z'}(2,2) \to 0$$
are globally generated.

 If $\deg (Z)=4$, then $Z$ is a complete intersection of two hypersurfaces of bidegree $(1,2)$ and thus we obtain the resolution (3) by a routine way.
\end{proof}

\begin{remark}\label{re44}$ $
\begin{enumerate}
\item For $\Ee \in \mathfrak{M}(k)$ with $k\in \{4,5\}$, we can observe that $(1,0)$ and $(0,1)$ are the indices of $\Ee$ simultaneously, but not $(1,1)$.
\item By twisting with $\Oo_Q(-1,-1)$, we can identify $\mathfrak{M}(4)$ with $\mathfrak{M} ((0,0),2)$, the moduli space of stable vector bundles of rank $2$ on $Q$ with $c_1=(0,0)$ and $c_2=2$. Le Potier \cite{LP} analyzes the restriction to the quadric $Q$ of null correlation bundles $\Nn$. Let $\mathfrak{M}_{\mathbb
P^3}^0(0,1)$ be the open subset of $\mathfrak{M}_{\mathbb P^3}(0,1)$ consisting of all bundles $\Nn$ such that $\Nn |_{\cal Q}$ is
stable on $Q$. He shows that the restriction gives an \`etale quasi-finite morphism from $\mathfrak{M}_{\mathbb P^3}^0(0,1)$ onto an
open proper subset $U \subset \mathfrak{M}((0,0),2)$. The generic bundle $\Ee$ of $U$ has a twin pair (a Tjurin pair) of null correlation bundles restricting to it, while there are bundles $\Ee$ in $U$ with a unique null correlation bundle
restricting to it. In \cite{Soberon} Soberon-Chavez compactifies $\mathfrak{M}((0,0),2)$ using only the non-split and non-stable bundles described in Lemma \ref{nost}. In \cite{mr} is given an example of a bundle in  $\mathfrak{M}((0,0),2)$ but not in $U$, i.e. a stable bundle which is not the restriction of a null correlation bundle.
\item $\mathfrak{M} (5)$ is invariant by the involution $\sigma : Q\to Q$
which exchanges the two rulings of $Q$, i.e. $\sigma ^\ast (\Ee )\in \mathfrak{M} (5)$
for each $\Ee \in \mathfrak{M} (5)$. For a fixed $\Ee \in \mathfrak{M} (5)$, we obviously have $c_2(\sigma ^\ast (\Ee ))=5$.
Since $\sigma ^\ast (\Oo _Q(2,2)) \cong \Oo _Q(2,2)$, we have $c_1(\sigma ^\ast (\Ee )) = (2,2)$.
Assume that $\sigma ^\ast (\Ee )$ is not stable, i.e. assume the existence of
a line bundle $\Ll = \Oo _Q(a,b) \subset \sigma ^\ast (\Ee )$ such that $a+b \ge 4$. We have
$\Oo _Q(b,a) \cong \sigma ^\ast (\Ll) \subset \sigma ^\ast (\sigma ^\ast (\Ee )) \cong \Ee$.
Hence $\Ee$ is not stable, a contradiction.
\end{enumerate}
\end{remark}

When the index is $(0,0)$ we need the following definition:
\begin{definition}
Let $Z\subset Q$ be a locally complete intersection (l.c.i.) $0$-dimensional subscheme. For $(a,b)\in \ZZ^{\oplus 2}$, $Z$ is said to satisfy {\it Cayley-Bacharach} for curves of bidegree $(a,b)$, simply $\mathrm{CB}(a,b)$, if any curve of bidegree $(a,b)$ containing a subscheme of $Z$ with colength 1 contains $Z$.
\end{definition}
\begin{proposition}\cite{gh}
For a l.c.i. $0$-dimensional subscheme $Z\subset Q$, there exists an exact sequence
$$0\to \Oo_Q \to \Ee \to \Ii_Z(a,b) \to 0$$
where $\Ee$ is a vector bundle of rank $2$ if and only if $Z$ satisfies $\mathrm{CB}(a-2,b-2)$.
\end{proposition}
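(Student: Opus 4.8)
The plan is to prove this via the Serre correspondence, reducing local freeness of the middle term to a pointwise generation condition on the extension class and then matching that condition with Cayley--Bacharach. First I would recall that extensions $0\to\Oo_Q\to\Ee\to\Ii_Z(a,b)\to 0$ are classified by $\Ext^1(\Ii_Z(a,b),\Oo_Q)$, and that away from $Z$ the sheaf $\Ii_Z(a,b)$ is already invertible, so $\Ee$ is automatically locally free off $Z$. Hence being a vector bundle is a condition local at the finitely many points of $Z$, and the standard criterion is that $\Ee$ is locally free at $P\in Z$ if and only if the extension class generates the stalk of $\mathcal{E}xt^1(\Ii_Z(a,b),\Oo_Q)$ at $P$.

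Next I would compute the local and global Ext groups. Applying $\mathcal{H}om(-,\Oo_Q)$ to $0\to\Ii_Z\to\Oo_Q\to\Oo_Z\to 0$ and using that $Z$ is a $0$-dimensional l.c.i. (so $\mathcal{E}xt^i(\Oo_Z,\Oo_Q)=0$ for $i<2$, while $\mathcal{E}xt^2(\Oo_Z,\Oo_Q)$ is invertible on $Z$ by local duality) gives $\mathcal{H}om(\Ii_Z(a,b),\Oo_Q)\cong\Oo_Q(-a,-b)$ and $\mathcal{E}xt^1(\Ii_Z(a,b),\Oo_Q)\cong\mathcal{E}xt^2(\Oo_Z,\Oo_Q)(-a,-b)$, an invertible $\Oo_Z$-module of length $\deg Z$. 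The local-to-global spectral sequence then yields $0\to H^1(\Oo_Q(-a,-b))\to\Ext^1(\Ii_Z(a,b),\Oo_Q)\xrightarrow{\rho}H^0(\mathcal{E}xt^1)\xrightarrow{\delta}H^2(\Oo_Q(-a,-b))$. By the previous step, $\Ee$ is a vector bundle precisely when $\rho(\xi)$ is a nowhere-vanishing section of the invertible sheaf $\mathcal{E}xt^1$ on $Z$; by Nakayama this amounts to fiberwise nonvanishing at each closed point $P\in Z$.

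Then I would invoke Serre duality to bring in the curves of bidegree $(a-2,b-2)$. Since $\omega_Q=\Oo_Q(-2,-2)$, duality gives $\Ext^1(\Ii_Z(a,b),\Oo_Q)\cong H^1(\Ii_Z(a-2,b-2))^\vee$ and $H^2(\Oo_Q(-a,-b))\cong H^0(\Oo_Q(a-2,b-2))^\vee$, and identifies $\delta$ with the transpose of the restriction map $H^0(\Oo_Q(a-2,b-2))\to H^0(\mathcal{E}xt^1)^\vee$ that evaluates a curve of bidegree $(a-2,b-2)$ on $Z$. Because $Z$ is l.c.i., hence Gorenstein, each closed point $P\in Z$ admits a unique colength-$1$ subscheme $Z'_P$, obtained by removing the socle, so both the local-freeness conditions and the CB conditions are indexed by the same finite set of points. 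Using the residual sequence $0\to\Ii_Z(a-2,b-2)\to\Ii_{Z'_P}(a-2,b-2)\to\CC_P\to 0$, the possibility of making $\rho(\xi)$ nonzero in the fiber at $P$ translates into injectivity of the connecting map $\CC_P\to H^1(\Ii_Z(a-2,b-2))$, which holds exactly when every curve of bidegree $(a-2,b-2)$ through $Z'_P$ also passes through $Z$, i.e. $\mathrm{CB}(a-2,b-2)$ at $P$.

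Finally I would assemble the existence statement. If $\mathrm{CB}(a-2,b-2)$ holds, each of the finitely many one-dimensional images in $H^1(\Ii_Z(a-2,b-2))$ is nonzero, so a general class $\xi$, namely a functional avoiding the finite union of the proper hyperplanes annihilating them (possible since $\CC$ is infinite), is simultaneously a generator at every point of $Z$ and produces a locally free $\Ee$; conversely, if CB fails at some $P$ then $\rho(\xi)$ vanishes at $P$ for every $\xi$, and no such extension is a bundle. The step I expect to be the main obstacle is the identification in the third paragraph: matching the coboundary $\delta$ with the dual restriction map, and verifying that fiberwise nonvanishing of $\rho(\xi)$ at $P$ corresponds precisely to injectivity of the connecting map for the unique socle quotient $Z'_P$. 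This hinges on the Gorenstein/local-duality description of $\mathcal{E}xt^2(\Oo_Z,\Oo_Q)$ and the compatibility of the two dualities; once it is in place, the general-position gluing is routine.
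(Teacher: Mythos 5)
The paper gives no proof of this proposition at all --- it is quoted from Griffiths--Harris \cite{gh} as a known result (the Serre correspondence with the Cayley--Bacharach condition), so there is nothing internal to compare against. Your argument is the standard proof of that cited result and it is correct in all essentials: the local criterion that $\Ee$ is locally free at $P\in Z$ iff the class generates the stalk of $\mathcal{E}xt^1(\Ii_Z(a,b),\Oo_Q)\cong \mathcal{E}xt^2(\Oo_Z,\Oo_Q)(-a,-b)$, which is an invertible $\Oo_Z$-module precisely because $Z$ is l.c.i.\ (hence Gorenstein); the five-term exact sequence identifying $\operatorname{im}\rho=\ker\delta$; and the Serre-duality identification of $\delta$ with the transpose of restriction $H^0(\Oo_Q(a-2,b-2))\to H^0(\Oo_Z)$. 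The step you flag as the main obstacle does close up: writing $W$ for the image of restriction and $\mathrm{Soc}_P\subset H^0(\Oo_Z)$ for the (one-dimensional, by Gorenstein-ness) socle at $P$, a section of $\omega_Z$ vanishes in the fiber at $P$ exactly when it annihilates $\mathrm{Soc}_P$, so $\ker\delta=W^\perp$ admits an element nonvanishing at $P$ iff $\mathrm{Soc}_P\not\subseteq W$, which is precisely the statement that no curve of bidegree $(a-2,b-2)$ cuts out $Z'_P$ without cutting out $Z$, i.e.\ $\mathrm{CB}(a-2,b-2)$ at $P$; your reformulation via injectivity of $\CC_P\to H^1(\Ii_Z(a-2,b-2))$ is equivalent. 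The final general-position argument over the infinite field $\CC$, and the converse, are as you say. The only cosmetic caveat is that the ``only if'' half of the pointwise local-freeness criterion deserves its usual one-line justification (via $\mathcal{E}xt^1(\Ee,\Oo_Q)=0$ for $\Ee$ locally free), but this is standard.
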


\begin{lemma}\label{le3}
If $(0,0)$ is the index of $\Ee$, then we have one and only one of the following cases:
\begin{enumerate}
\item $\Ee\in \mathfrak{M}(6)$ with index $(0,0)$.
\item $\Ee$ is of maximal type.
\end{enumerate}
\end{lemma}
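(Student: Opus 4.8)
The plan is to read off the possible values of $c_2$ from the hypothesis that $(0,0)$ is an index, to show that $\Ee$ is stable, and then to split into the two cases according to whether $c_2=6$ or $c_2=8$, excluding the intermediate value $c_2=7$ by a Cayley--Bacharach argument, which is the real content.

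First I would record what the index hypothesis buys us. From a general section we obtain the sequence (\ref{eqa3}) with $c_1=(2,2)$, namely $0\to \Oo_Q\to \Ee\to \Ii_Z(2,2)\to 0$ with $Z$ reduced of length $c_2$; global generation of $\Ii_Z(2,2)$ forces $Z$ onto a complete intersection of two curves of bidegree $(2,2)$, so $c_2\le 8$. Twisting the sequence by $(-1,0)$ and using $h^0(\Oo_Q(-1,0))=h^1(\Oo_Q(-1,0))=0$ gives $h^0(\Ee(-1,0))=h^0(\Ii_Z(1,2))$, and symmetrically $h^0(\Ee(0,-1))=h^0(\Ii_Z(2,1))$. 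Since $(0,0)$ is an index both vanish; because $Z$ imposes at most $\deg Z$ conditions on the $6$-dimensional space $H^0(\Oo_Q(1,2))$, the vanishing $h^0(\Ii_Z(1,2))=0$ forces $\deg Z\ge 6$. Hence $c_2\in\{6,7,8\}$.

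Next I would establish stability. If $\Oo_Q(a,b)\hookrightarrow \Ee$ were a saturated line subbundle with $a+b\ge 2$, its quotient $\Ii_{Z'}(2-a,2-b)$, being a quotient of the globally generated $\Ee$, is globally generated, which forces $a\le 2$ and $b\le 2$; together with $a+b\ge 2$ this yields $a,b\ge 0$, so $(a,b)\ge (0,0)$ with $(a,b)\ne (0,0)$, contradicting the maximality of the index $(0,0)$. Thus $\Ee$ is stable and lies in $\mathfrak{M}(c_2)$. For $c_2=8$ we have $\deg Z=8$, so $Z$ is itself the complete intersection; the Koszul resolution of $\Ii_Z$ twisted by $(2,2)$ reads $0\to \Oo_Q(-2,-2)\to \Oo_Q^{\oplus 2}\to \Ii_Z(2,2)\to 0$, and splicing it with (\ref{eqa3}) (using $\Ext^1(\Oo_Q^{\oplus 2},\Oo_Q)=0$) produces $0\to \Oo_Q(-2,-2)\to \Oo_Q^{\oplus 3}\to \Ee\to 0$, i.e. $\Ee$ is of maximal type. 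Conversely a bundle of maximal type with $c_1=(2,2)$ has $c_2=2\cdot 2\cdot 2=8$ by Remark \ref{ff1}. This is case (2), whereas $c_2=6$ together with the index $(0,0)$ is exactly case (1); the two alternatives are disjoint since maximal type forces $c_2=8$.

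The main obstacle is excluding $c_2=7$. In this case $Z\subset W$, where $W$ is a complete intersection of two $(2,2)$-curves with $\deg W=8$, so $Z$ is a colength-one subscheme of $W$, say $W=Z\cup\{p\}$ with $p\notin Z$. By the Cayley--Bacharach property of a complete intersection \cite{gh}, $W$ satisfies $\mathrm{CB}(2,2)$, since the relevant bidegree is $(2,2)+(2,2)+(-2,-2)=(2,2)$ (recall $\omega_Q=\Oo_Q(-2,-2)$). Consequently every curve of bidegree $(2,2)$ containing the colength-one subscheme $Z$ of $W$ must contain $W$, hence passes through $p$. Therefore every global section of $\Ii_Z(2,2)$ vanishes at the point $p\notin Z$, so $\Ii_Z(2,2)$, being a quotient of $\Ee$, fails to be globally generated at $p$, contradicting the global generation of $\Ee$. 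This rules out $c_2=7$ and leaves precisely the two alternatives in the statement.
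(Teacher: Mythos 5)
Your proof is correct, and its skeleton --- bounding $c_2\le 8$ by the complete-intersection containment, forcing $c_2\ge 6$ from $h^0(\Ee(-1,0))=h^0(\Ii_Z(1,2))=0$, and reading off maximal type from the Koszul resolution when $\deg (Z)=8$ --- coincides with the paper's. Where you genuinely diverge is the exclusion of $c_2=7$, which is the real content: the paper isolates this as Proposition \ref{length7} and proves it by a residual-scheme case analysis (Lemmas \ref{a1} and \ref{a2}, followed by the two cases $w=4$ and $w=3$ for a $(1,1)$-curve meeting $Z$ maximally), whereas you invoke Cayley--Bacharach for the length-$8$ complete intersection $W\supset Z$. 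Your route is shorter and stays inside the paper's own toolkit: the claim that $W$ satisfies $\mathrm{CB}(2,2)$ follows from the quoted criterion of \cite{gh}, since the twisted Koszul complex $0\to\Oo_Q\to\Oo_Q(2,2)^{\oplus 2}\to\Ii_W(4,4)\to 0$ exhibits the required rank-$2$ bundle with $(a,b)=(4,4)$, hence $\mathrm{CB}(a-2,b-2)=\mathrm{CB}(2,2)$; and your argument in fact reproves Proposition \ref{length7} in full generality, not only for the $Z$ arising from a section of $\Ee$. One small refinement: you should not assume $W=Z\cup\{p\}$ with $p\notin\mathrm{Supp}(Z)$, since the colength-one residual may be an infinitesimal thickening supported at a point of $Z$; this costs nothing, because $\mathrm{CB}(2,2)$ gives $H^0(\Ii_Z(2,2))=H^0(\Ii_W(2,2))$ while $\Ii_W(2,2)\subsetneq\Ii_Z(2,2)$, and Nakayama at the point where the two ideals differ already contradicts global generation. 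Your stability paragraph, needed so that case (1) lands in $\mathfrak{M}(6)$, is also sound and matches what the paper asserts just before Lemma \ref{le2}.
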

\begin{proof}
$\Ee$ fits into the sequence
$$0\to \Oo_Q \to \Ee \to \Ii_Z(2,2) \to 0$$
with $\deg (Z)\le 8$. If $\deg (Z)\le 5$, then we have $h^0(\Ii_Z(1,2))\ne 0$ and so $(0,0)$ cannot be the index of $\Ee$. Thus we have $\deg (Z)\ge 6$. If $\deg (Z)=8$, then $Z$ is a complete intersection of two hypersurfaces of bidegree $(2,2)$. Since $\Ii_Z$ admits the resolution $$0\to \Oo_Q(-4,-4) \to \Oo_Q(-2,-2)^{\oplus 2} \to \Ii_Z \to 0,$$
so we have the resolution
$$0\to \Oo_Q(-2, -2) \to \Oo_Q^{\oplus 3} \to \Ee \to 0$$
and in particular, $\Ee$ is of maximal type. The case of $\deg (Z)=7$ is not possible since $\Ii_Z(2,2)$ is not globally generated (see Proposition \ref{length7}).

Now let us assume that $\deg (Z)=6$. Let $\mathrm{Hilb}^6(Q)$ be the Hilbert scheme of all $0$-dimensional subschemes of $Q$ with degree $6$. Since $H^0(\Ii_{Z'}(0,0))=0$ for all $Z'\subset Z$ with $\deg (Z')>0$, the {\it Cayley-Bacharach} condition is satisfied and so we can associate a vector bundle of rank $2$ with $c_1=(2,2)$ and $c_2=6$ to each $Z\in \mathrm{Hilb}^6(Q)$. Let us define the subset $\Uu$ to be the set of all $Z$ such that $\Ii_Z(2,2)$ is globally generated and so is its associated vector bundle of rank $2$. Since $\mathrm{Hilb}^6(Q)$ is smooth and irreducible with dimension $12$, so $\Uu$ is non-empty, irreducible and of dimension $12$.

Let us also define a subset $\Vv\subset \mathrm{Hilb}^6(Q)$:
$$\Vv =\{ Z \in \mathrm{Hilb}^6(Q) ~|~ H^0(\Ii_Z(1,2))=H^0(\Ii_Z(2,1))=0\}.$$
For $Z\in \Vv$, we have $H^1(\Ii_Z(1,2))=H^1(\Ii_Z(2,1))=0$ and $H^2(\Ii_Z(1,1))=0$. Thus $\Ii_Z(2,2)$ is globally generated by Theorem \ref{gg} and so we have $\Vv \subset \Uu$. Note that the union of $6$ general points is contained in $\Vv$. From its definition, a vector bundle $\Ee$ is associated to $Z\in \Vv$ if and only if the index of $\Ee$ is $(0,0)$.

For each $Z\in \Vv$, the dimension of $\Ext^1 (\Ii_Z(2,2), \Oo_Q)$ is $5$ and so the set of all globally generated vector bundles of rank $2$ with $c_1=(2,2)$ and $c_2=6$ is parametrized by an irreducible variety. Each such a bundle is obviously stable. For each $Z\in \Vv$, we have $h^1(\Ii_Z(1,2))=0$ and so $h^1(\Ii_Z(2,2))=0$. Thus we have $h^0(\Ee)=4$ and $h^1(\Ee)=0$.
\end{proof}

\begin{proposition}\label{length7}
For $0$-dimensional subschemes $Z$ of $Q$ with length $7$, $\Ii_Z(2,2)$ is not globally generated.
\end{proposition}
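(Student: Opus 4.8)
The plan is to argue by contradiction: suppose $\Ii_Z(2,2)$ is globally generated for some $0$-dimensional $Z\subset Q$ with $\deg(Z)=7$. Since $h^0(\Oo_Q(2,2))=9$ and $\deg(Z)=7$, imposing vanishing along $Z$ drops the space of sections by at most $7$, so $h^0(\Ii_Z(2,2))\ge 2$ and $|\Ii_Z(2,2)|$ contains at least a pencil of curves of bidegree $(2,2)$ through $Z$. The goal is to show that the base scheme of this linear system is strictly larger than $Z$, which is incompatible with global generation.

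First I would dispose of the degenerate possibility that $|\Ii_Z(2,2)|$ has a fixed curve component: such a component lies in the base locus, which is then positive-dimensional, so $\Ii_Z(2,2)$ already fails to be globally generated at its points outside $Z$. Otherwise the system has no fixed component, two general members $C_1,C_2\in|\Ii_Z(2,2)|$ meet properly, and $W:=C_1\cap C_2$ is a complete intersection of length $(2,2)\cdot(2,2)=8$ containing $Z$. Because $\deg(W)=8=\deg(Z)+1$, the subscheme $Z$ has colength $1$ in $W$.

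The key input is the Cayley--Bacharach property of the complete intersection $W$. By adjunction, $\omega_W\cong(\omega_Q\otimes\Oo_Q(2,2)\otimes\Oo_Q(2,2))|_W\cong\Oo_Q(2,2)|_W$, using $\omega_Q=\Oo_Q(-2,-2)$; hence $W$ satisfies $\mathrm{CB}(2,2)$, meaning every curve of bidegree $(2,2)$ containing a colength-$1$ subscheme of $W$ must contain all of $W$. Applying this to the colength-$1$ subscheme $Z\subset W$, every member of $|\Ii_Z(2,2)|$ contains $W$. Thus the base scheme of $|\Ii_Z(2,2)|$ contains $W\supsetneq Z$, and $\Ii_Z(2,2)$ cannot be globally generated at a point of $W\setminus Z$, the desired contradiction.

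The main obstacle, and the reason the statement is sharp, is the numerology $8-1=7$: it is precisely when $\deg(Z)=7$ that $Z$ becomes a colength-$1$ subscheme of a $(2,2)$-complete intersection, so that $\mathrm{CB}(2,2)$ forces the extra base point; for $\deg(Z)\le 6$ one only gets colength $\ge 2$ and this mechanism breaks down. The technical points I would need to secure are the Cayley--Bacharach property of $W$ (standard for complete intersections, which I would reprove through the self-duality $\omega_W\cong\Oo_Q(2,2)|_W$ together with Serre duality on $W$) and the reduction to a genuine length-$8$ complete intersection, i.e. ruling out or absorbing the case of a fixed component in the pencil.
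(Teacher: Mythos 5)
Your argument is correct, but it takes a genuinely different route from the paper's. You reduce the statement to the classical Cayley--Bacharach theorem: after discarding a possible fixed component, two general members of $|\Ii_Z(2,2)|$ cut out a length-$8$ complete intersection $W\supseteq Z$ with $\omega_W\cong\Oo_Q(2,2)|_W$, so $W$ satisfies $\mathrm{CB}(2,2)$, and the colength-$1$ inclusion $Z\subset W$ forces $H^0(\Ii_Z(2,2))=H^0(\Ii_W(2,2))$, which kills global generation along $W\setminus Z$. The two points you flag as needing care are both securable; in fact the CB property of $W$ follows from the Griffiths--Harris criterion already quoted in the paper (the proposition following the definition of $\mathrm{CB}(a,b)$, from \cite{gh}) applied to the twisted Koszul sequence $0\to\Oo_Q\to\Oo_Q(2,2)^{\oplus 2}\to\Ii_W(4,4)\to 0$: since the middle term is a rank-$2$ vector bundle, $W$ satisfies $\mathrm{CB}(4-2,4-2)=\mathrm{CB}(2,2)$, so no independent duality argument is needed. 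The paper argues quite differently: it first notes that $h^0(\Ii_Z(2,2))\ge 3$ because $Z$, having length $7<8$, cannot be a complete intersection, whence $h^1(\Ii_Z(2,2))>0$ by Riemann--Roch; it then contradicts this positivity by a case analysis on the maximal degree $w$ of $Z\cap E$ for $E\in|\Oo_Q(1,1)|$ (necessarily $3\le w\le 4$), repeatedly peeling off $(1,1)$-curves and lines via the residual exact sequence (\ref{eqa10}) and Lemmas \ref{a1} and \ref{a2}. Your approach is shorter, more conceptual, and explains the sharpness of the bound ($7=8-1$ is precisely colength one in a $(2,2)$-complete intersection, which is where the CB mechanism first bites); the paper's approach is longer and more case-based but entirely elementary and self-contained, relying only on restriction and residuation against low-degree curves.
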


Before proving the proposition, let us collect some materials on the residual scheme.

\begin{definition}
Let $A\subset Q$ be a $0$-dimensional subscheme and $D\subset Q$ be an effective divisor of type $(u,v)\in \ZZ^{\oplus 2}$. {\it The residual scheme of $A$ with respect to $D$} is defined as the closed subscheme of $Q$ with $(\Ii_A : \Ii_D)$ as its ideal sheaf. It is denoted by $\mbox{Res}_D(A)$.
\end{definition}

\begin{remark}
From the definition, we have
\begin{itemize}
\item $\mbox{Res}_D(A) \subseteq A$ and
\item $\deg (A) = \deg (A\cap D) +\deg (\mbox{Res}_D(A))$.
\end{itemize}
For all $m, n\in \ZZ$ we also have an exact sequence
\begin{equation}\label{eqa10}
0 \to \Ii _{\mbox{Res}_D(A)}(m-u,n-v) \to \Ii _A(m,n)\to  \Ii _{D\cap A,D}(m,n) \to 0
\end{equation}
\end{remark}

\begin{lemma}\label{a1}
For $L_1\in \vert \Oo _Q(1,0)\vert$ and $L_2\in \vert \Oo _Q(0,1)\vert$, let us take a zero-dimensional subscheme $A\subset L_1\cup L_2$ such that $\deg (A) \le 3$ and $\deg (A\cap L_i) \le 2$ for
all $i$. Then we have $h^1(\Ii _A(1,1)) =0$.
\end{lemma}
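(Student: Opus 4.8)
The plan is to strip off one of the two lines by means of the residual exact sequence \eqref{eqa10}, reducing the vanishing to a cohomology computation on $L_1\cong\PP^1$ together with a very short subscheme of $L_2$. Write $a_i=\deg (A\cap L_i)$, so that $a_i\le 2$, and recall from the additivity of degrees above that $\deg (\mbox{Res}_{L_1}(A))=\deg (A)-a_1$.

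The first point is the bound $\deg (A)\le a_1+a_2$. Since $A\subseteq L_1\cup L_2$ we have $\Ii_{L_1}\Ii_{L_2}\subseteq \Ii_{L_1}\cap \Ii_{L_2}=\Ii_{L_1\cup L_2}\subseteq \Ii_A$, hence $\Ii_{L_2}\subseteq (\Ii_A:\Ii_{L_1})$ and thus $\mbox{Res}_{L_1}(A)\subseteq L_2$. As $\mbox{Res}_{L_1}(A)\subseteq A$ too, we get $\mbox{Res}_{L_1}(A)\subseteq A\cap L_2$ and therefore $\deg (A)-a_1=\deg (\mbox{Res}_{L_1}(A))\le a_2$. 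Using the symmetry interchanging the two rulings, I may assume $a_1\ge a_2$ (otherwise I run the same argument with $D=L_2$). Then $2a_1\ge a_1+a_2\ge \deg (A)$, and since $\deg (A)\le 3$ this forces $\deg (\mbox{Res}_{L_1}(A))=\deg (A)-a_1\le 1$.

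Next I apply \eqref{eqa10} with $D=L_1$ and $(m,n)=(1,1)$,
$$0\to \Ii_{\mbox{Res}_{L_1}(A)}(0,1)\to \Ii_A(1,1)\to \Ii_{L_1\cap A,L_1}(1,1)\to 0,$$
and check that the two outer sheaves have no $h^1$. For the quotient, $L_1\cong \PP^1$ and $\Oo_Q(1,1)|_{L_1}\cong \Oo_{\PP^1}(1)$, so $\Ii_{L_1\cap A,L_1}(1,1)\cong \Oo_{\PP^1}(1-a_1)$; as $a_1\le 2$ we have $1-a_1\ge -1$ and hence $h^1=0$. For the subsheaf, $\mbox{Res}_{L_1}(A)$ is a subscheme of $L_2$ of degree at most $1$, so $h^1(\Ii_{\mbox{Res}_{L_1}(A)}(0,1))=0$: when the degree is $1$ this holds because $\Oo_Q(0,1)$ is globally generated and so admits a section not vanishing at the single point, while the empty case is trivial. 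The long exact sequence of \eqref{eqa10} then yields $h^1(\Ii_A(1,1))=0$.

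I expect the main obstacle to be the degree bound $\deg (A)\le a_1+a_2$ and, through it, the estimate $\deg (\mbox{Res}_{L_1}(A))\le 1$. Without this control the residual could have degree $2$, and a length-$2$ subscheme of $L_2$ does contribute $h^1(\Ii_{(-)}(0,1))=1$ because $\Oo_Q(0,1)$ restricts to the trivial bundle on $L_2$; the containment $\mbox{Res}_{L_1}(A)\subseteq A\cap L_2$ forced by $A\subseteq L_1\cup L_2$, together with $\deg (A)\le 3$, is precisely what prevents this. The remaining inputs are only the standard identifications $\Oo_Q(1,1)|_{L_1}\cong \Oo_{\PP^1}(1)$ and the global generation of $\Oo_Q(0,1)$.
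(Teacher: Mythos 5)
Your proof is correct and takes essentially the same route as the paper: peel off the line meeting $A$ in the larger degree via the residual sequence (\ref{eqa10}), then check that the quotient term on $L_1\cong\PP^1$ has degree $\ge -1$ and that the residual, being of length at most $1$, satisfies $h^1(\Ii_{\mbox{Res}_{L_1}(A)}(0,1))=0$. The only difference is cosmetic: you justify the inequality $\deg(A)\le\deg(A\cap L_1)+\deg(A\cap L_2)$ via the containment $\mbox{Res}_{L_1}(A)\subseteq A\cap L_2$, which the paper uses without comment.
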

\begin{proof}
Since $\deg (A) \le \deg (A\cap L_1)+\deg (A\cap L_2)$, there is $i\in \{1,2\}$ such that $\deg (L_i\cap A) =2$. Without loss of generality, let us assume that $i=1$.
Since $\deg (L_1\cap A) =2$, we have $h^1(\Ii _{A\cap L_1}(1,1)) =0$. Since $\deg (\mbox{Res}_{L_1}(A)) \le 1$, we have $h^1(\Ii _{\mbox{Res}_{L_1}(A)}(0,1)) =0$ and then we can apply the sequence (\ref{eqa10}).
\end{proof}

\begin{lemma}\label{a2}
For $L_1\in \vert \Oo _Q(1,0)\vert$ and $L_2\in \vert \Oo _Q(0,1)\vert$, let us take a zero-dimensional subscheme $A\subset L_1\cup L_2$ such that $\deg (A\cap L_i) \le 2$ for
all $i$. Then we have $h^1(\Ii _A(2,2)) =0$.
\end{lemma}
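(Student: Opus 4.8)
The plan is to prove the vanishing by residuating $A$ successively along the two rulings, exactly in the spirit of the proof of Lemma \ref{a1}, reducing everything to line bundles of nonnegative degree on the $\PP^1$'s $L_1$ and $L_2$. First I would apply the residual sequence (\ref{eqa10}) with $D=L_1$ (of type $(1,0)$, so $u=1$, $v=0$) and $m=n=2$, giving
\begin{equation*}
0\to \Ii_{\mbox{Res}_{L_1}(A)}(1,2) \to \Ii_A(2,2) \to \Ii_{A\cap L_1, L_1}(2,2)\to 0.
\end{equation*}
From the associated long exact cohomology sequence it suffices to show that the two $H^1$ terms flanking $H^1(\Ii_A(2,2))$ both vanish.

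For the quotient term, note that $L_1\cong \PP^1$ and $\Oo_Q(2,2)|_{L_1}\cong \Oo_{\PP^1}(2)$, so $\Ii_{A\cap L_1, L_1}(2,2)$ is a line bundle on $\PP^1$ of degree $2-\deg(A\cap L_1)\ge 0$ by hypothesis; hence $h^1(\Ii_{A\cap L_1, L_1}(2,2))=0$. For the kernel term, set $B=\mbox{Res}_{L_1}(A)$. Since $A\subseteq L_1\cup L_2$, the ideal $(\Ii_A:\Ii_{L_1})$ contains both $\Ii_A$ and $\Ii_{L_2}$ (the latter because $\Ii_{L_1}\Ii_{L_2}\subseteq \Ii_A$), so $B\subseteq A\cap L_2$; in particular $B\subseteq L_2$ and $\deg(B)\le \deg(A\cap L_2)\le 2$. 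It remains to prove $h^1(\Ii_B(1,2))=0$.

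For this I would residuate once more, applying (\ref{eqa10}) to $B$ with $D=L_2$ (of type $(0,1)$) and $m=1$, $n=2$. Because $B\subseteq L_2$, the residual $\mbox{Res}_{L_2}(B)$ is empty and $B\cap L_2=B$, so the sequence reads
\begin{equation*}
0\to \Oo_Q(1,1)\to \Ii_B(1,2)\to \Ii_{B,L_2}(1,2)\to 0.
\end{equation*}
Here $h^1(\Oo_Q(1,1))=0$, while $\Oo_Q(1,2)|_{L_2}\cong \Oo_{\PP^1}(1)$ makes $\Ii_{B,L_2}(1,2)$ a line bundle on $\PP^1$ of degree $1-\deg(B)\ge -1$, whose $H^1$ therefore vanishes. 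Consequently $h^1(\Ii_B(1,2))=0$, and feeding this back into the first sequence yields $h^1(\Ii_A(2,2))=0$.

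The only genuinely delicate point is the bound $\deg(B)=\deg(\mbox{Res}_{L_1}(A))\le 2$, which amounts to controlling the scheme structure of $A$ at the intersection point $L_1\cap L_2$; I handle this uniformly via the scheme-theoretic inclusion $B\subseteq A\cap L_2$ coming from $\Ii_{L_1}\Ii_{L_2}\subseteq \Ii_A$, avoiding any case analysis. Everything else reduces to the same line-bundle-on-$\PP^1$ computation already used in Lemma \ref{a1}, so no new ideas beyond iterated residuation are required.
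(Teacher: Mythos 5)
Your proof is correct and follows essentially the same route as the paper: residuate along $L_1$ via the sequence (\ref{eqa10}), observe that $\mbox{Res}_{L_1}(A)\subset L_2$ has degree at most $\deg(A\cap L_2)\le 2$, and reduce both terms to line bundles of degree $\ge -1$ on $\PP^1$ (plus $h^1(\Oo_Q(1,1))=0$). The only difference is organizational --- the paper splits off $\deg(A)\le 3$ by citing Lemma \ref{a1} and runs this argument only for $\deg(A)=4$, whereas your double residuation handles all cases uniformly and makes the bound $\deg(\mbox{Res}_{L_1}(A))\le 2$ explicit via $\Ii_{L_1}\Ii_{L_2}\subseteq \Ii_A$.
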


\begin{proof}
Since $\deg (A) \le \deg (A\cap L_1)+\deg (A\cap L_2)$, we have $\deg (A)\le 4$. Lemma \ref{a1} gives the
case of $\deg (A)\le 3$. Assume $\deg (A)=4$. We have $h^1(\Ii _{\mbox{Res}_{L_1}}(1,2)) =0$ because $\mbox{Res}_{L_1}(A)\subset L_2$, $L_2\cong \mathbb {P}^1$
and $\deg (\mbox{Res}_{L_1}(A) \cap L_2) \le \deg (A\cap L_2) \le 2$. We then apply the sequence (\ref{eqa10}).
\end{proof}

\begin{proof}[Proof of Proposition \ref{length7}]
Let us take $Z\subset Q$ with $\deg (Z)=7$ and assume that $\Ii _Z(2,2)$ is globally generated. In particular, we have $\deg (Z\cap L) \le 2$ for all lines $L\subset Q$.
Since $\deg (Z) < 8$, $Z$ is not a complete intersection and so $h^0(\Ii _Z(2,2)) \ge 3$. It implies that $h^1(\Ii _Z(2,2)) >0$.

Let us take a curve $E\in \vert \Oo _Q(1,1)\vert$ such that $w:= \deg (A\cap E) $ is maximal. Assume for the moment that $E$ is irreducible. Since $\Ii _Z(2,2)$ is globally generated, we have $w \le 4$. If $E$ is reducible, say $E =L_1\cup L_2$, then the same is
true, because $\deg (A\cap (L_1\cup L_2)) \le \deg (A\cap L_1)+\deg (A\cap L_2)$. In both cases we have $h^1(\mathcal {I}_{E\cap Z}(2,2)) =0$; if $E$ is reducible, this is Lemma \ref{a2}. Hence the sequence (\ref{eqa10}) gives $h^1(\Ii _{\mbox{Res}_E(Z)}(1,1)) > 0$.

\quad (a) Let us assume first that $w =4$ and so $\deg (\mbox{Res}_E(Z)) =3$. We have $\deg (\mbox{Res}_E(Z)\cap L) \le \deg (Z\cap L) \le 2$ for each line $L$. First assume the existence of a line $R_1$, say of type $(1,0)$, such that $\deg (R_1\cap  \mbox{Res}_E(Z)) =2$. We have $h^1(R_1,\Ii _{R_1\cap  \mbox{Res}_E(Z),R_1}(1,1)) =0$ and $\deg (\mbox{Res}_{R_1}(\mbox{Res}_E(Z)))=1$. Hence $h^1(\Ii _{\mbox{Res}_{R_1}(\mbox{Res}_E(Z))}(0,1))=0$. We apply the sequence (\ref{eqa10}) to get a contradiction. Now assume $\deg (\mbox{Res}_E(Z)\cap L)
\le 1$ for each line. Since $h^0(\Oo _Q(1,1)) =4$, there is a smooth conic $D$ such that $\deg (D\cap \mbox{Res}_E(Z)) \ge 2$. Since $\deg (\mbox{Res}_E(Z)) =3$, we have $\deg (\mbox{Res}_D(\mbox{Res}_E(Z))) \le 1$ and so $h^1(\Ii _{\mbox{Res}_D(\mbox{Res}_E(Z))})=0$. Since $D$ is a smooth conic and $\deg (D\cap \mbox{Res}_E(Z)) \le 3$, we have $h^1(D,\Ii _{\mbox{Res}_E(Z),D}(1,1)) =0$, contradicting (\ref{eqa10}).

\quad (b) Now assume that $w=3$ and so $\deg (\mbox{Res}_E(Z)) =4$. Let us take $F\in \vert \Oo _Q(1,1)\vert$ such that $z:= \deg (F\cap \mbox{Res}_E(Z))$
is maximal. Since $3 \le z \le w =3$, we have $z=3$ and so $h^1(F,\Ii _{F\cap \mbox{Res}_E(Z),F}(1,1)) =0$, using Lemma \ref{a1} if $F$ is reducible.
From (\ref{eqa10}), we get $h^1(\Ii _{\mbox{Res}_F(\mbox{Res}_E(Z)})) >0$. But we have $\deg (\mbox{Res}_F(\mbox{Res}_E(Z)))=1$ and this is absurd.
\end{proof}

%%%%%%%%%%%%%%%%%%%%%%%%%%%%%%

\section{Higher Rank Case}

Let $\Ee$ be a globally generated vector bundle of rank $r \ge3$ on $Q$ and it fits into the following sequence
\begin{equation}\label{heqa}
0\to \Oo_Q^{\oplus (r-2)} \to \Ee \to \Ff \to 0
\end{equation}
where $\Ff$ is a globally generated vector bundle of rank $2$ on $Q$ with the Chern classes $c_i(\Ff)=c_i(\Ee)$ for $i=1,2$. Conversely, if $\Ff$ is a globally generated vector bundle of rank $2$, then any coherent sheaf $\Ee$ fitting into the sequence (\ref{heqa}) is globally generated since $h^1(\Oo_Q)=0$. Note that $h^0(\Ee)=h^0(\Ff)+r-2$.

\begin{lemma}
If $\Ee$ has no trivial summand, then we have
$$\mathrm{rank} (\Ee) \le h^1(\Ff^\vee)+2.$$
\end{lemma}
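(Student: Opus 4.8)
The plan is to extract the bound directly from the vector space of extension classes that produces $\Ee$. Since $\Ee$ is the middle term of the sequence (\ref{heqa}), it is classified by an element
$$\xi \in \Ext^1(\Ff,\Oo_Q^{\oplus(r-2)})\cong \Ext^1(\Ff,\Oo_Q)\otimes\CC^{r-2}.$$
As $\Ff$ is locally free, $\mathcal{E}xt^j(\Ff,\Oo_Q)=0$ for $j>0$, so the local-to-global spectral sequence yields $\Ext^1(\Ff,\Oo_Q)\cong H^1(\Ff^\vee)$. Thus $\xi$ is a tuple $(e_1,\dots,e_{r-2})$ with $e_i\in H^1(\Ff^\vee)$, and the desired inequality $r\le h^1(\Ff^\vee)+2$ is equivalent to the assertion that $e_1,\dots,e_{r-2}$ are linearly independent over $\CC$.

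The heart of the argument is therefore the claim: if $e_1,\dots,e_{r-2}$ are linearly dependent, then $\Ee$ has a trivial summand. First I would record that moving $\xi$ within its orbit under $\Aut(\Oo_Q^{\oplus(r-2)})=\mathrm{GL}_{r-2}(\CC)$ — which acts on $\xi$ through the $\CC^{r-2}$ factor by invertible linear combinations of the $e_i$ — leaves the middle term $\Ee$ isomorphic. If the $e_i$ satisfy a nontrivial relation $\sum_i\lambda_ie_i=0$, I would take an invertible matrix whose last column is $(\lambda_1,\dots,\lambda_{r-2})^{T}$ and use it to move $\xi$ into a tuple whose last entry is $0$, without changing $\Ee$ up to isomorphism.

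Next I would split off the corresponding factor. Decomposing $\Oo_Q^{\oplus(r-2)}=\Oo_Q^{\oplus(r-3)}\oplus\Oo_Q$ along the last coordinate and using the additivity $\Ext^1(\Ff,\Oo_Q^{\oplus(r-2)})\cong\bigoplus_{i=1}^{r-2}\Ext^1(\Ff,\Oo_Q)$, the transported class has the shape $(\xi',0)$; an extension whose class has a vanishing component in one trivial summand splits that summand off, so $\Ee\cong\Ee'\oplus\Oo_Q$, contradicting the hypothesis. This proves the claim, so the $e_i$ are independent and $r-2=\dim_\CC\langle e_1,\dots,e_{r-2}\rangle\le h^1(\Ff^\vee)$, as required.

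The delicate point — the one step I would write out in full rather than assert — is the splitting lemma invoked twice above: that modifying the extension class by $\mathrm{GL}_{r-2}(\CC)$ leaves $\Ee$ isomorphic, and that a zero component of the class in a direct summand of $\Oo_Q^{\oplus(r-2)}$ forces that $\Oo_Q$ to be a direct factor of $\Ee$. Both follow formally from the bifunctoriality of $\Ext^1(-,-)$ and the pushout description of extensions, so I anticipate no geometric difficulty; the entire content of the lemma is the bookkeeping that converts linear dependence of the $e_i$ into a trivial direct summand.
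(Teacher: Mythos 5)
Your proof is correct and follows essentially the same route as the paper: the paper likewise views $\Ee$ as given by extension classes $e_1,\dots,e_{r-2}\in H^1(\Ff^\vee)$, uses a change of basis of $\Oo_Q^{\oplus(r-2)}$ to make one class vanish when they are linearly dependent, and concludes that $\Ee$ acquires a trivial factor. You simply spell out the bifunctoriality and pushout bookkeeping that the paper leaves implicit.
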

\begin{proof}
If $r:=\mathrm{rank}(\Ee)> h^1(\Ff^\vee)+2$, then $\Ee$ is given by extension classes $e_1, \cdots, e_{r-2} \in H^1(\Ff^\vee)$ and they are linearly dependent. By changing a basis of the trivial bundle $\Oo_Q^{\oplus (r-2)}$, we reduce to the case of $e_{r-2}=0$ and so we have $\Ee \cong \Gg \oplus \Oo_Q$ for some vector bundle $\Gg$ of rank $r-1$. In particular, $\Ee$ has a trivial factor.
\end{proof}

\begin{corollary}
There exists an indecomposable and globally generated vector bundle on $Q$ with $c_1=(2,2)$ only if the rank is at most $8$.
\end{corollary}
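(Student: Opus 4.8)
The plan is to deduce this immediately from the Lemma above together with the numerical bound $c_2\le c_1^2$ recorded at the beginning of Section 3, the only real work being a uniform estimate on $h^1(\Ff^\vee)$. First I would set up the reduction: let $\Ee$ be an indecomposable globally generated vector bundle on $Q$ with $c_1=(2,2)$ and $r:=\mathrm{rank}(\Ee)$. For $r\le 2$ there is nothing to prove, so I assume $r\ge 3$. Being indecomposable of rank $\ge 2$, $\Ee$ has no trivial summand, so the preceding Lemma applies and gives $r\le h^1(\Ff^\vee)+2$, where $\Ff$ is the globally generated rank $2$ quotient appearing in the sequence (\ref{heqa}), which satisfies $c_1(\Ff)=c_1(\Ee)=(2,2)$ and, by the remark that $c_2\le c_1^2$, also $c_2(\Ff)\le 8$. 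Thus the whole statement reduces to the uniform estimate $h^1(\Ff^\vee)\le 6$ over all globally generated rank $2$ bundles $\Ff$ with $c_1=(2,2)$.

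For that estimate I would use that $\Ff$ has rank $2$, so $\Ff^\vee\cong\Ff(-2,-2)$, giving $c_1(\Ff^\vee)=(-2,-2)$ and $c_2(\Ff^\vee)=c_2(\Ff)=:c$. Feeding this into the Riemann--Roch formula stated in Section 2 yields $\chi(\Ff^\vee)=2-c$. Next, since $\omega_Q=\Oo_Q(-2,-2)$, Serre duality identifies $h^2(\Ff^\vee)=h^0(\Ff\otimes\omega_Q)=h^0(\Ff(-2,-2))=h^0(\Ff^\vee)$, so that $h^1(\Ff^\vee)=2\,h^0(\Ff^\vee)+c-2$. Because $c\le 8$, the term $c-2$ is already $\le 6$, and the only thing that could spoil the bound is the term $h^0(\Ff^\vee)$.

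The final step controls $h^0(\Ff^\vee)$. I would observe that $h^0(\Ff^\vee)=h^0(\Ff(-2,-2))\ne 0$ forces $(2,2)$ to be an index of $\Ff$ (indices of $\Ff$ are always $\le c_1=(2,2)$, so this is the only possibility), and then Proposition \ref{prop1} forces $\Ff\cong\Oo_Q\oplus\Oo_Q(2,2)$, which is split with $c=0$ and hence $h^1(\Ff^\vee)=0$. In every remaining case $h^0(\Ff^\vee)=0$ and $h^1(\Ff^\vee)=c-2\le 6$. Either way $h^1(\Ff^\vee)\le 6$, so $r\le 8$, as claimed. The argument is essentially routine once the Lemma is in hand; the only point requiring care is the bookkeeping of $h^0(\Ff^\vee)$, and I expect that to be the mildly delicate step rather than a genuine obstacle. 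As an alternative to the $\chi$-computation one can bypass it entirely and simply read $h^1(\Ff^\vee)$ off the classification of Lemmas \ref{nost}--\ref{le3}, where the extremal value $h^1(\Ff^\vee)=6$ is attained precisely by the maximal-type bundle with $c_2=8$.
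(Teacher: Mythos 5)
Your reduction is exactly the paper's: indecomposable of rank $r\ge 3$ means no trivial summand, so the preceding Lemma gives $r\le h^1(\Ff^\vee)+2$ for the rank $2$ bundle $\Ff$ in the sequence (\ref{heqa}), and everything hinges on $h^1(\Ff^\vee)\le 6$. Where you diverge is in how that bound is obtained. The paper simply reads it off the case-by-case classification of rank $2$ globally generated bundles with $c_1=(2,2)$ carried out in Section~4 (Lemmas \ref{nost}--\ref{le3}), where the extremal value $6$ occurs for the maximal-type bundle. You instead derive it a priori: $\Ff^\vee\cong\Ff(-2,-2)$, Riemann--Roch gives $\chi(\Ff^\vee)=2-c_2$, Serre duality gives $h^2(\Ff^\vee)=h^0(\Ff^\vee)$, and the only input beyond $c_2\le c_1^2=8$ is that $h^0(\Ff(-c_1))\ne 0$ forces the split case $\Oo_Q\oplus\Oo_Q(2,2)$ by Proposition \ref{prop1} (your detour through ``index'' is unnecessary here, since Proposition \ref{prop1} applies directly to $H^0(\Ff(-c_1))\ne 0$, but it is harmless). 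This is correct --- your formula $h^1(\Ff^\vee)=2h^0(\Ff^\vee)+c_2-2$ reproduces all the values $2,3,4,6$ quoted later in Section~5 --- and it buys logical independence from the Section~4 classification, at the cost of being slightly less informative: the paper's route also tells you exactly which $\Ff$ realize each value of $h^1(\Ff^\vee)$, which is what the rest of Section~5 actually needs.
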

\begin{proof}
From the classification of such vector bundles of rank $2$, we have $h^1(\Ff^\vee) \le 6$ and thus the rank of $\Ee$ is at most $8$.
\end{proof}

The goal of this section is to describe the possible rank and Chern classes with which indecomposable vector bundles on $Q$ exist. The main result is the following:

\begin{theorem}\label{hr}
There exists a globally generated and indecomposable vector bundle of rank $r\ge 3$ with the Chern classes $(c_1, c_2)\in \ZZ^{\oplus 3}$ with $c_1=(a,b)\le (2,2)$ and $a\le b$ if and only if
\begin{align*}
(c_1, c_2;r)\in \{  &(1,1,2;3), (1,2,3;3),(1,2,4;r=3,4,5),(2,2,4;3)\\
&(2,2,5;3),(2,2,6;r=3,4,5), (2,2,8;r=3,4,5,6,7,8) \}.
\end{align*}
\end{theorem}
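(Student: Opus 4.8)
The plan is to reduce the entire higher-rank classification to the rank-$2$ classification already established, together with the extension mechanism encoded in the sequence (\ref{heqa}). The starting observation is that any globally generated indecomposable $\Ee$ of rank $r\ge 3$ sits in
$$0\to \Oo_Q^{\oplus (r-2)} \to \Ee \to \Ff \to 0,$$
where $\Ff$ is a globally generated rank-$2$ bundle with $c_i(\Ff)=c_i(\Ee)$; such $\Ee$ is determined by $r-2$ linearly independent classes $e_1,\dots,e_{r-2}\in \Ext^1(\Ff,\Oo_Q)\cong H^1(\Ff^\vee)$. By the Lemma preceding Corollary, the absence of a trivial summand forces $r\le h^1(\Ff^\vee)+2$, so the first task is to compute $h^1(\Ff^\vee)$ for each rank-$2$ bundle appearing in Propositions \ref{1.1}, \ref{prop1.2} and Lemmas \ref{nost}, \ref{le1}, \ref{le2}, \ref{le3}. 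These are all given by explicit resolutions, so $h^1(\Ff^\vee)$ is a routine cohomology computation from the dualized sequences; this pins down the maximal admissible rank for each $(c_1,c_2)$ and produces the \emph{upper} bound on the list.

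Next I would run through the candidate $(c_1,c_2)$ one at a time. For each $\Ff$ on the rank-$2$ list I compute $N:=h^1(\Ff^\vee)$ and then, for every $3\le r\le N+2$, construct an explicit $\Ee$ by choosing $r-2$ linearly independent extension classes and verify two things: that $\Ee$ is globally generated (automatic, since $h^1(\Oo_Q)=0$, as noted after (\ref{heqa})) and that $\Ee$ has no trivial summand. For indecomposability I would invoke Lemma \ref{trivial}: a globally generated $\Ee$ is decomposable with a trivial factor exactly when $\Hom(\Ee,\Oo_Q)\ne 0$ in a way splitting off $\Oo_Q$, so it suffices to check that no nonzero map $\Ee\to\Oo_Q$ exists, equivalently that the chosen classes stay linearly independent and span transversally. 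In the maximal-type cases I can instead appeal directly to Lemma \ref{mi} and Remark \ref{ff1}(1), which say that maximal type is preserved under extension by $\Oo_Q$ and that a maximal-type bundle with no trivial factor is automatically indecomposable; this handles $(2,2,8;r=3,\dots,8)$ cleanly since there $\Ff$ is of maximal type and $h^1(\Ff^\vee)=6$.

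The complementary direction—showing that ranks beyond the listed ones, and Chern data off the list, cannot occur—follows from the bound $r\le h^1(\Ff^\vee)+2$ together with the exhaustiveness of the rank-$2$ classification: any indecomposable higher-rank $\Ee$ projects onto some rank-$2$ globally generated $\Ff$ with the same Chern classes, and if that $(c_1,c_2)$ did not already appear in the Main Theorem's rank-$2$ list, no such $\Ff$ exists. The main obstacle I anticipate is the indecomposability verification in the intermediate cases, where $r$ is strictly between $3$ and $h^1(\Ff^\vee)+2$: linear independence of the $e_i$ rules out splitting off a single $\Oo_Q$ via Lemma \ref{trivial}, but one must also exclude subtler decompositions $\Ee\cong\Ee_1\oplus\Ee_2$ with both factors nontrivial. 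Here I would argue by comparing $h^0$ and the induced indices: a nontrivial direct-sum decomposition would force one summand to be a lower-rank globally generated bundle with prescribed Chern classes, and by counting $h^0(\Ee)=h^0(\Ff)+r-2$ against the $h^0$ of the hypothetical summands—exactly the counting used in the proof of Lemma \ref{mi}—one derives that some summand must be trivial, contradicting the no-trivial-summand hypothesis. This $h^0$-bookkeeping, done case by case using the explicit resolutions, is where the real work lies.
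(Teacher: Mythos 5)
Your overall architecture matches the paper's: reduce to the rank-$2$ classification via the sequence $0\to \Oo_Q^{\oplus(r-2)}\to\Ee\to\Ff\to 0$, bound the rank by $h^1(\Ff^\vee)+2$, and then decide indecomposability case by case. The existence half is also handled the way the paper does it for the maximal-type cases (Lemma \ref{mi} plus Remark \ref{ff1}). But there is a genuine gap in your treatment of the non-existence half for the intermediate ranks, and it is exactly where the paper spends most of its effort.

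The bound $r\le h^1(\Ff^\vee)+2$ only rules out a \emph{trivial} summand; it does not produce the upper bounds appearing in the theorem. The list excludes $(1,2,3;4)$, $(2,2,4;4)$, $(2,2,5;r=4,5)$ and $(2,2,6;6)$, yet in each of these cases $r-2\le h^1(\Ff^\vee)$, so extensions by linearly independent classes (hence with no trivial summand) do exist and are globally generated. The point is that all of them decompose into \emph{non-trivial} factors: for instance the paper proves $\mathfrak{G}_4=\{\Oo_Q(1,0)\oplus\Oo_Q(0,1)\oplus\Aa_P\}$ and $\mathfrak{G}_5=\{\Oo_Q(1,0)\oplus\Oo_Q(0,1)\oplus T\PP^3(-1)|_Q\}$ for $\Ff\in\mathfrak{M}(5)$, and $\mathfrak{U}=\{(T\PP^3(-1)|_Q)^{\oplus 2}\}$ for rank $6$ with $c_2=6$ and index $(0,0)$. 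Your proposed remedy --- the $h^0$-count from Lemma \ref{mi} forcing one summand to be trivial --- fails here: that count only works when $h^0(\Ee)=\mathrm{rk}(\Ee)+1$, i.e.\ in the maximal-type cases, whereas $\Oo_Q(1,0)\oplus\Oo_Q(0,1)\oplus\Aa_P$ has $h^0=7$ at rank $4$ and neither summand is trivial, so no contradiction arises and the case is not excluded. The paper's actual arguments for these cases are of a different nature: analysis of the saturation of the image of canonical maps $\Oo_Q(1,0)\oplus\Oo_Q(0,1)\to\Ee$, the involution exchanging the rulings to force symmetry of line-bundle factors, the splitting criteria of Ancona--Ottaviani (Theorem 6.1 and 6.7 of \cite{AO}) to extract $\Oo_Q(1,1)$ or $\Oo_Q(0,1)$ summands, uniformity and semicontinuity arguments (Proposition \ref{split}), and rigidity of $(T\PP^3(-1)|_Q)^{\oplus 2}$ (Proposition \ref{uu1}). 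Without some substitute for these, your plan cannot establish the "only if" direction of the theorem for the intermediate ranks, and would in fact overcount the list.
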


\begin{proposition}
Let $\Ee$ be a globally generated and indecomposable vector bundle of rank $r\ge 3$ with $c_1=(1,1)$. Then we have
$$\Ee \cong T\PP^3(-1)_{|_Q}.$$
\end{proposition}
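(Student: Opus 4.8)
The plan is to pin down both the rank and the rank-two quotient, and then to recognize $T\PP^3(-1)|_Q$ as the unique nonsplit extension. First I would use the presentation (\ref{heqa}),
$$0\to \Oo_Q^{\oplus (r-2)} \to \Ee \to \Ff \to 0,$$
in which $\Ff$ is globally generated of rank $2$ with $c_1(\Ff)=(1,1)$. By Proposition \ref{1.1} either $\Ff$ splits or $\Ff\cong \Aa$. If $\Ff$ split it would be $\Oo_Q(1,1)\oplus \Oo_Q$ or $\Oo_Q(1,0)\oplus \Oo_Q(0,1)$, and in either case a K\"unneth computation gives $h^1(\Ff^\vee)=0$; since an indecomposable $\Ee$ of rank $\ge 3$ has no trivial summand, the rank bound of this section forces $\mathrm{rank}(\Ee)\le h^1(\Ff^\vee)+2=2$, a contradiction. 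Hence $\Ff\cong \Aa$. Dualizing the resolution $0\to \Oo_Q(-1,-1)\to \Oo_Q^{\oplus 3}\to \Aa\to 0$ from Proposition \ref{1.1} and taking cohomology yields $h^1(\Aa^\vee)=1$, so the same rank bound gives $r\le 3$ and hence $r=3$. Thus $\Ee$ is an extension
$$0\to \Oo_Q\to \Ee\to \Aa\to 0$$
which, by indecomposability of $\Ee$, is nonsplit; and since $\dim_\CC \Ext^1(\Aa,\Oo_Q)=h^1(\Aa^\vee)=1$, it is determined up to isomorphism, every nonzero class producing an isomorphic middle term.

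It remains to identify this middle term, which I would do by realizing $T\PP^3(-1)|_Q$ as such an extension directly. With $Q=\PP V_1\times \PP V_2\subset \PP V$, $V=V_1\otimes V_2$, the center $P$ of $\phi_P$ determines a line $\langle v_0\rangle\subset V$ and a quotient $W=V/\langle v_0\rangle$. Restricting the tautological sequence of $\PP V$ to $Q$ gives
$$0\to \Oo_Q(-1,-1)\to V\otimes \Oo_Q\to T\PP^3(-1)|_Q\to 0,$$
while pulling the tautological sequence of $\PP W=\PP^2$ back by $\phi_P$ gives
$$0\to \Oo_Q(-1,-1)\to W\otimes \Oo_Q\to \Aa\to 0,$$
since $\phi_P^\ast \Oo_{\PP W}(1)=\Oo_Q(1,1)$ and, because $P\notin Q$, the composite of the tautological inclusion of $\PP V$ with $V\to W$ is exactly the tautological inclusion of $\PP W$. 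The surjection $V\otimes \Oo_Q\to W\otimes \Oo_Q$ with kernel $\Oo_Q$ then fits these two sequences into a commutative ladder with identical left terms, and the snake lemma yields
$$0\to \Oo_Q\to T\PP^3(-1)|_Q\to \Aa\to 0.$$

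Finally I would check this extension is nonsplit, equivalently that $T\PP^3(-1)|_Q$ has no trivial factor. Restricting the dual Euler sequence of $\PP^3$ to $Q$ gives $0\to \Omega^1_{\PP^3}(1)|_Q\to \Oo_Q^{\oplus 4}\to \Oo_Q(1,1)\to 0$, and on global sections the map $\CC^4\to H^0(\Oo_Q(1,1))$ is restriction of linear forms to the nondegenerate quadric $Q$, hence an isomorphism of four-dimensional spaces. Therefore $\Hom(T\PP^3(-1)|_Q,\Oo_Q)=H^0(\Omega^1_{\PP^3}(1)|_Q)=0$, so Lemma \ref{trivial} shows $T\PP^3(-1)|_Q$ has no trivial factor; being of maximal type with $h^1(\Oo_Q(-1,-1))=0$, it is indecomposable by Lemma \ref{mi}. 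Its extension class is thus the nonzero generator of $\Ext^1(\Aa,\Oo_Q)$, and by the uniqueness above $\Ee\cong T\PP^3(-1)|_Q$; this holds for every $P$ since $T\PP^3(-1)|_Q$ does not depend on $P$. The main obstacle is the middle step: producing the surjection $T\PP^3(-1)|_Q\to \Aa$ with kernel $\Oo_Q$ for an arbitrary center $P$, which is the geometric content identifying the restricted Euler bundle with the nonsplit extension of $\Aa_P$.
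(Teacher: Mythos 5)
Your proof is correct, and its first half (forcing $\Ff\cong\Aa_P$ because all split candidates have $h^1(\Ff^\vee)=0$, then using $h^1(\Aa_P^\vee)=1$ to pin down $r=3$ and the uniqueness of the nonsplit extension) is exactly the paper's argument. Where you diverge is the identification of that unique extension with $T\PP^3(-1)|_Q$. The paper argues in the opposite direction: it takes a general section of $T\PP^3(-1)|_Q$, invokes the rank-two classification to conclude the quotient is $\Aa_P$ for \emph{some} $P$, and then transports this to every other center via the transitive action of $\Aut(Q)$ on $\PP^3\setminus Q$ together with the uniqueness of the center of projection of $\Aa_P$. You instead build the surjection $T\PP^3(-1)|_Q\to\Aa_P$ for an \emph{arbitrary} $P$ by fitting the restricted Euler sequence of $\PP V$ and the pulled-back Euler sequence of $\PP(V/\langle v_0\rangle)$ into a ladder and applying the snake lemma, and you verify nonsplitness explicitly via $H^0(\Omega^1_{\PP^3}(1)|_Q)=0$. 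Your route is more self-contained: it avoids the appeal to $\Aut(Q)$-transitivity and to the fact that $\Aa_P$ determines $P$ (which the paper only establishes via the jumping-conic description in Proposition \ref{1.1}), and it makes explicit the nonsplitness/no-trivial-factor check that the paper leaves implicit when it asserts the quotient of $T\PP^3(-1)|_Q$ by a general section cannot be a split bundle. The paper's route is shorter on the page but leans on more external facts; yours trades that for a concrete diagram chase. Both are sound.
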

\begin{proof}
Note that $h^1(\Oo_Q(-a,-b))=0$ for $a,b\in \{0,1\}$ and so the only possibility for $\Ff$ is $\Aa_P$ from Proposition \ref{1.1}. Since $h^1(\Aa_P^\vee)=h^1(\phi_P^*(\Omega_{\PP^2}^1(1)))=1$, so there exists a non-trivial extension
$$0\to \Oo_Q \to \Bb_P \to \Aa_P \to 0.$$
For any two points $P,O\in \PP^3\setminus Q$, there exists an automorphism $g\in \mathrm{Aut}(Q)$ such that $g^*(\Aa_P)\cong \Aa_O$ because $\mathrm{Aut}(Q)$ acts transitively on the set of all points of $\PP^3\setminus Q$ and any pull-back of twisted tangent bundle is uniquely determined by its center of projection. Thus $g^*(\Bb_P)\cong \Bb_O$. Conversely, since $T\PP^3(-1)|_Q$ is globally generated, so it fits into an exact sequence
$$0\to \Oo_Q \to T\PP^3(-1)|_Q \to \Ff \to 0$$
for some globally generated vector bundle $\Ff$ of rank 2 with $c_1(\Ff)=(1,1)$. From the classification of such bundles, we have $\Ff \cong \Aa_P$ for some $P$. Thus we have $T\PP^3(-1)|_Q \cong \Bb_P$ for each $P$.
\end{proof}

\begin{proposition}
Let $\Ee$ be a globally generated and indecomposable vector bundle of rank $r\ge 3$ with $c_1=(1,2)$. Then $\Ee$ is of maximal type or has $c_2=3$ with $\mathrm{rank}(\Ee)=3$.
\end{proposition}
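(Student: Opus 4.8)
The plan is to mimic the proof of the previous proposition (the case $c_1=(1,1)$) and of the lemmas for $c_1=(2,2)$: present $\Ee$ as an extension of a rank $2$ bundle by trivial summands, bound the rank by an $h^1$, and then eliminate the extremal ranks by producing an explicit splitting.

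First I would invoke the sequence (\ref{heqa}),
$$0\to \Oo_Q^{\oplus (r-2)} \to \Ee \to \Ff \to 0,$$
in which $\Ff$ is globally generated of rank $2$ with $c_1(\Ff)=(1,2)$ and $c:=c_2(\Ff)=c_2(\Ee)$. As $\Ee$ is indecomposable of rank $\ge 3$ it has no trivial summand, so the rank bound proved above yields $r\le h^1(\Ff^\vee)+2$. By Proposition \ref{prop1.2} and the elementary list of globally generated splittings $\Oo_Q(\alpha,\beta)\oplus\Oo_Q(1-\alpha,2-\beta)$, the candidates for $\Ff$ are the three bundles (1),(2),(3) of Proposition \ref{prop1.2} (with $c=2,3,4$) and the split bundles $\Oo_Q\oplus\Oo_Q(1,2)$, $\Oo_Q(0,1)\oplus\Oo_Q(1,1)$, $\Oo_Q(0,2)\oplus\Oo_Q(1,0)$ (with $c=0,1,2$); in particular $c\in\{0,1,2,3,4\}$ and every split $\Ff$ has $c\le 2$.

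The value $c=4$ gives the conclusion at once: here $\Ff$ must be the maximal type bundle (3), and by Remark \ref{ff1}(1) every extension of it by $\Oo_Q^{\oplus(r-2)}$ is again of maximal type. When $c\in\{0,1\}$ the bundle $\Ff$ is split with $h^1(\Oo_Q(-\alpha,-\beta))=0$ on each factor, so $h^1(\Ff^\vee)=0$; then (\ref{heqa}) splits and $\Ee$ carries a trivial summand, contradicting indecomposability. For $c=2$ one checks $h^1(\Ff^\vee)=1$ in both remaining cases, so $r\le 3$ and it is enough to identify the unique non-split extension of $\Ff$ by $\Oo_Q$. If $\Ff$ is the bundle (1), the connecting homomorphism shows this extension is exactly its defining sequence reversed, giving $\Ee\cong\Oo_Q(1,0)\oplus\Oo_Q(0,1)^{\oplus2}$; if $\Ff=\Oo_Q(0,2)\oplus\Oo_Q(1,0)$, the class sits in $H^1(\Oo_Q(0,-2))=H^1(\PP^1,\Oo_{\PP^1}(-2))$, so the extension is pulled back from the ruling and $\Ee\cong\Oo_Q(0,1)^{\oplus2}\oplus\Oo_Q(1,0)$. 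Both are decomposable, so $c=2$ is impossible.

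The hard part will be $c=3$, where $\Ff$ is the bundle (2) and a resolution computation gives $h^1(\Ff^\vee)=2$, hence only $r\le 4$; I must show $r=4$ does not occur. For $r=4$ the two classes form a basis of $\Ext^1(\Ff,\Oo_Q)$, so realizing $\Ee$ as the pushout of the defining sequence $0\to\Oo_Q(-1,-1)\to\Oo_Q(0,1)\oplus\Oo_Q^{\oplus2}\to\Ff\to0$ along a map $\Oo_Q(-1,-1)\to\Oo_Q^{\oplus2}$ presents it as
$$\Ee\cong\mathrm{coker}\Big(\Oo_Q(-1,-1)\xrightarrow{(s_1,\,t)}\Oo_Q(0,1)\oplus\Oo_Q^{\oplus4}\Big),$$
where the four $\Oo_Q$-components $t$ turn out to be a basis of $H^0(\Oo_Q(1,1))$, i.e.\ the restricted Euler map with $\mathrm{coker}(t)\cong T\PP^3(-1)|_Q$, and $s_1\in H^0(\Oo_Q(1,2))$. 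The decisive point is that the multiplication $H^0(\Oo_Q(1,1))\otimes H^0(\Oo_Q(0,1))\to H^0(\Oo_Q(1,2))$ is surjective, so $s_1=w\circ t$ for some $w\colon\Oo_Q^{\oplus4}\to\Oo_Q(0,1)$; applying the automorphism $\left(\begin{smallmatrix}1&-w\\0&I\end{smallmatrix}\right)$ of $\Oo_Q(0,1)\oplus\Oo_Q^{\oplus4}$ replaces $(s_1,t)$ by $(0,t)$, whence $\Ee\cong\Oo_Q(0,1)\oplus T\PP^3(-1)|_Q$ is decomposable. This contradiction forces $r=3$ and finishes the classification.
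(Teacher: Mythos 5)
Your argument is correct and reaches the stated classification, but the decisive step is handled quite differently from the paper. Both proofs share the same skeleton: present $\Ee$ as an extension of a rank $2$ bundle $\Ff$ by $\Oo_Q^{\oplus(r-2)}$, bound the rank by $h^1(\Ff^\vee)+2$, and run through the cases of Proposition \ref{prop1.2}. The cases $c_2\le 2$ and $c_2=4$ are treated essentially identically (your explicit treatment of the split $\Ff$'s is a spelled-out version of what the paper dismisses via Remark \ref{ff2} and the sentence ``splitness of $\Ff$ implies splitness of $\Ee$''). The divergence is in excluding $r=4$ when $c_2=3$: the paper computes $h^0(\Ee(-1))=h^1(\Ee(-2))=0$ and $h^1(\Ee(-2,-1))=1$ and then invokes Theorem 6.7 of Ancona--Ottaviani to split off $\Oo_Q(0,1)$, whereas you present $\Ee$ as $\mathrm{coker}\bigl(\Oo_Q(-1,-1)\xrightarrow{(s_1,t)}\Oo_Q(0,1)\oplus\Oo_Q^{\oplus4}\bigr)$, observe that indecomposability forces $t$ to be a basis of $H^0(\Oo_Q(1,1))$ (so $\mathrm{coker}(t)\cong T\PP^3(-1)|_Q$), and kill $s_1$ by a column operation using the surjectivity of $H^0(\Oo_Q(1,1))\otimes H^0(\Oo_Q(0,1))\to H^0(\Oo_Q(1,2))$. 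Your route is more elementary and self-contained, and it even identifies the decomposable rank-$4$ bundle explicitly as $\Oo_Q(0,1)\oplus T\PP^3(-1)|_Q$; the paper's route is shorter at the cost of citing Beilinson-type machinery. One remark: you do not prove that an indecomposable rank-$3$ bundle with $c_2=3$ actually exists. That is not required by the proposition as stated (it is a pure necessity statement), but the paper's proof includes this existence argument (the general quotient of $\Oo_Q(0,1)\oplus T\PP^3(-1)|_Q$ by a section) because it is needed for Theorem \ref{hr}; if your proof is to feed into the main theorem, that half still has to be supplied.
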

\begin{proof}
It is obvious that the splitness of $\Ff$ in the sequence (\ref{heqa}) implies the splitness of $\Ee$ in the case of $c_1=(1,2)$. If $\Ff$ admits the sequence (1) in Proposition \ref{prop1.2}, then we have $h^1(\Ff^\vee)=1$, i.e. there exists a unique non-trivial extension of $\Ff$ by $\Oo_Q$. It is clearly $\Oo_Q(1,0)\oplus \Oo_Q(0,1)^{\oplus 2}$ and, in particular, $\Ee$ splits.

If $\Ff$ is of maximal type, then we have $h^1(\Ff^\vee)=3$ and thus there exist indecomposable vector bundles of rank up to $5$ with $c_2=4$.

Let us assume that $\Ff$ admits the sequence (2) in Proposition \ref{prop1.2}, i.e. $c_2(\Ff)=3$. Since $h^1(\Ff^\vee)=2$, so the rank of $\Ee$ must be at most $4$. Let us assume that the rank of $\Ee$ is $4$, i.e. we have
$$0\to \Oo_Q^{\oplus 2} \to \Ee \to \Ff \to 0.$$
From the sequence (2), we have $h^0(\Ff(-1))=0$ and $h^1(\Ff(-2))=2$. Thus we have $h^0(\Ee(-1))=h^1(\Ee(-2))=0$. By Theorem 6.7 in \cite{AO} with $S''=\Oo_Q(0,-1)$ and $t=0$, we have $h^1(\Ee(-2,-1))=h^1(\Ff(-2,-1))=1$ and so $\Ee$ has $\Oo_Q(0,1)$ as its direct summand. Thus we have $\Ee \cong \Oo_Q(0,1)\oplus \Gg$ where $\Gg$ is a globally generated vector bundle of rank $3$ with $c_1=(1,1)$ and $c_2=2$. In particular, it is decomposable, a contradiction.

Let us assume $\mathrm{rank}(\Ee)=3$ and so it is an extension of $\Ff$ by $\Oo_Q$. In particular, we have $h^1(\Ee^\vee)=1$ and so there exists a non-trivial extension $\Gg$ of $\Ee$ by $\Oo_Q$. Since $\Gg$ is a globally generated vector bundle of rank $4$ with $c_1=(1,2)$ and $c_2=3$, so we have $\Gg \cong \Oo_Q(0,1)\oplus \Uu$ with either $\Uu \cong T\PP^3(-1)|_Q$ or $\Uu$ has a trivial factor; the latter case cannot occur because $\Gg$ has no trivial factor. Note that $h^0(\Ee(0,-1))=h^0(\Ff(0,-1))=1$. Let us assume that $\Ee$ is decomposable and then $\Oo_Q(0,1)$ must be a direct factor, i.e. we have $\Ee \cong \Oo_Q(0,1)\oplus \Gg$ where $\Gg$ is globally generated with $c_1=(1,1)$ and $c_2=2$. From the classification, we have $\Gg \cong \Aa_P$ for some $P\in \PP^3\setminus Q$.

Let us take a general section of $\Oo _Q(0,1)\oplus T\PP^3(-1)|_Q$. Hence
we get an exact sequence
\begin{equation}\label{eqa1}
0 \to \Oo _Q\stackrel{u}{\to} \Oo _Q(0,1)\oplus T\PP^3(-1)|_Q \stackrel{\psi}{\to} \Ee \to 0
\end{equation}

Assume that $\Ee$ is decomposable, i.e. $\Ee \cong \Oo _Q(0,1) \oplus \Aa _P$ for some $P$. Hence $\psi$ is given by a $2\times 2$
matrix of maps. Since $h^0(\Aa _P(0,-1)) = h^0({\Omega _{\PP^3}}|_Q(1,0)) = 0$
, using the exact sequence
$$0 \to {\Omega _{\PP^3}}|_Q (1,0) \to \Oo _Q(0,-1)^{\oplus 4} \to \Oo _Q(1,0)\to 0,$$
$\psi$ is the diagonal matrix associated to $\psi _1: \Oo _Q(0,1)\to \Oo _Q(0,1)$
and $\psi _2: T\PP^3(-1)|_Q \to \Aa _P$. $\psi _1$ must be the multiplication by a non-zero-constant,
while $\psi _2$ must be surjective and have $\Oo _Q$ as its quotient. Hence $\mbox{ker}(\psi)$ is contained
in the factor $\{0\}\oplus T\PP^3(-1)|_Q$, not just isomorphic to a subsheaf of that factor and so $\mbox{Im}(u)$ is contained
in the factor $\{0\}\oplus T\PP^3(-1)|_Q$ , again not just isomorphic to a subsheaf of that factor. This is not true for a general $u$ and thus there exists an indecomposable and globally generated vector bundle of rank $3$ with $c_1=(1,2)$ and $c_2=3$.
\end{proof}

Let us assume that $\Ee$ has the first Chern class $c_1=(2,2)$. If the associated vector bundle $\Ff$ of rank $2$ splits, then we have $h^1(\Ff^\vee)=0$ except when $\Ff \cong \Oo_Q(2,0)\oplus \Oo_Q(0,2)$.

\begin{lemma}
There exists an indecomposable extension $\Ee$ of $\Oo_Q(2,0)\oplus \Oo_Q(0,2)$ by trivial factors if and only if $\mathrm{rank}(\Ee)=3$.
\end{lemma}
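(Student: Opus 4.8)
The plan is to pin down the admissible ranks with the bound already proved, and then to dispose of the two surviving ranks by explicit cohomology. First I would set $\Ff=\Oo_Q(2,0)\oplus\Oo_Q(0,2)$ and compute, by K\"unneth,
$$\Ext^1(\Ff,\Oo_Q)=H^1(\Ff^\vee)=H^1(\Oo_Q(-2,0))\oplus H^1(\Oo_Q(0,-2))\cong\CC^2,$$
so $h^1(\Ff^\vee)=2$. The earlier lemma $\mathrm{rank}(\Ee)\le h^1(\Ff^\vee)+2$ then forces any extension with no trivial summand to have rank $3$ or $4$; since the rank $2$ case is $\Ee=\Ff$, which splits, it suffices to rule out indecomposable rank $4$ extensions and to produce an indecomposable rank $3$ one.

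For rank $4$ the extension class lies in $\Ext^1(\Ff,\Oo_Q^{\oplus 2})\cong\CC^{2\times 2}$ and I would view it as a $2\times 2$ matrix $M$, its columns recording the classes of the two summands of $\Ff$ and its rows the two trivial factors. As $\Aut(\Oo_Q^{\oplus 2})=GL_2$ acts on the rows by left multiplication, changing the class but not the isomorphism type of $\Ee$, I may normalize $M$. If $\mathrm{rank}(M)\le 1$ I clear a row, so one $\Oo_Q$ splits off and $\Ee$ has a trivial factor. If $\mathrm{rank}(M)=2$ I bring $M$ to the identity, whence the class is block diagonal and $\Ee$ splits as the direct sum of the two sub-extensions of $\Oo_Q(2,0)$ and of $\Oo_Q(0,2)$ by $\Oo_Q$ (each being the unique non-split extension pulled back from a ruling, so $\Oo_Q(1,0)^{\oplus 2}$ and $\Oo_Q(0,1)^{\oplus 2}$). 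In either case $\Ee$ is decomposable, which settles rank $4$.

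For rank $3$ the class is a pair $\xi=(\xi_1,\xi_2)\in H^1(\Oo_Q(-2,0))\oplus H^1(\Oo_Q(0,-2))$, and I would take both $\xi_1,\xi_2$ nonzero, giving
$$0\to\Oo_Q\to\Ee\to\Oo_Q(2,0)\oplus\Oo_Q(0,2)\to 0.$$
Every decomposition of a rank $3$ bundle has a line bundle summand, and $\Ee$ is globally generated, so it is enough to show $\Ee$ has no line bundle summand $\Oo_Q(p,q)$ with $p,q\ge 0$. Such a summand is both a sub and a quotient, hence needs $H^0(\Ee(-p,-q))\ne 0$; twisting the sequence and using the K\"unneth values of $H^0(\Ff(-p,-q))$ together with the connecting maps shows $H^0(\Ee(-p,-q))=0$ for all $(p,q)\ge(0,0)$ except $(0,0),(1,0),(0,1)$, where the vanishing at $(2,0)$ and $(0,2)$ is exactly where $\xi_1\ne 0$ and $\xi_2\ne 0$ are used. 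I would then eliminate the three survivors by the dual computation: applying $\Hom(-,\Oo_Q(p,q))$ gives $\Hom(\Ff,\Oo_Q(p,q))=0$ and a connecting map $\Hom(\Oo_Q,\Oo_Q(p,q))\to\Ext^1(\Ff,\Oo_Q(p,q))$ which is cup product with $\xi$; a short factor-by-factor K\"unneth check shows it is injective for $(0,0)$, $(1,0)$, $(0,1)$ (using $\xi\ne 0$, $\xi_2\ne 0$, $\xi_1\ne 0$ respectively). Hence $\Hom(\Ee,\Oo_Q(p,q))=0$ in all three cases, no such line bundle is a quotient summand, and $\Ee$ is indecomposable. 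Taking $\xi_1,\xi_2\ne 0$ therefore realizes an indecomposable rank $3$ extension, completing the proof.

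The main obstacle I anticipate is the bookkeeping of the connecting homomorphisms in both the $H^0(\Ee(-p,-q))$ and the $\Hom(\Ee,\Oo_Q(p,q))$ computations: one must identify each as cup product or multiplication by $\xi$ and verify injectivity within the K\"unneth splitting, since this is precisely where the hypotheses $\xi_1\ne 0$, $\xi_2\ne 0$ enter and where the asymmetry between the $(2,0)$ and $(0,2)$ directions is exploited.
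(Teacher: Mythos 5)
Your proof is correct, and while it follows the same skeleton as the paper's (the bound $\mathrm{rank}(\Ee)\le h^1(\Ff^\vee)+2=4$, then a rank-by-rank analysis), the execution of both remaining cases is genuinely different. For rank $4$ the paper argues that a non-trivial extension is ACM because the connecting map $H^1(\Ff(-2,-2))\to H^2(\Oo_Q(-2,-2)^{\oplus 2})$ is an isomorphism, and then identifies $\Ee\cong\Oo_Q(1,0)^{\oplus 2}\oplus\Oo_Q(0,1)^{\oplus 2}$ by Chern classes; your row-reduction of the $2\times 2$ matrix $M\in\Ext^1(\Ff,\Oo_Q^{\oplus 2})$ reaches the same dichotomy (trivial factor when $\mathrm{rank}(M)\le 1$, block-diagonal splitting into $\Oo_Q(1,0)^{\oplus 2}\oplus\Oo_Q(0,1)^{\oplus 2}$ when $M$ is invertible) and is in fact more complete, since the paper's ACM claim tacitly assumes the invertible case. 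For rank $3$ the paper takes a \emph{general} extension and excludes each candidate decomposition by appealing to the earlier classification of rank-$2$ globally generated bundles and to computations of indices and $h^0$ of twists; you instead isolate the exact condition $\xi_1\ne 0$, $\xi_2\ne 0$ and kill all potential line-bundle summands directly, by showing $\Hom(\Oo_Q(p,q),\Ee)=0$ outside $(0,0),(1,0),(0,1)$ and $\Hom(\Ee,\Oo_Q(p,q))=0$ at those three, with each vanishing traced to injectivity of a cup product with $\xi_1$ or $\xi_2$ in the K\"unneth decomposition. Your route is more self-contained (it does not invoke Proposition 3.7 or the index formalism) and yields the sharper statement that \emph{every} extension with both components nonzero is indecomposable, not merely a general one; the paper's route is shorter where the classification machinery is already available. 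The cohomological bookkeeping you flagged as the main risk does check out: the connecting maps are multiplication by $\xi_1$, $\xi_2$ on one-dimensional K\"unneth factors exactly as you describe.
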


\begin{proof}
Let $\Ff$ be $\Oo_Q(2,0)\oplus \Oo_Q(0,2)$. Since we have $h^1(\Ff^\vee)=2$ for a bundle $\Ff$ in the sequence (\ref{heqa}), so $\Ee$ has a trivial factor if $\mathrm{rank}(\Ee)\ge 5$.  If $\Ee$ is a vector bundle arising in a non-trivial extension $$0\to \Oo_Q^{\oplus 2} \to \Ee \to \Ff \to 0,$$
then it is ACM, since the map
$$H^1(\Oo_Q(0,-2)\oplus\Oo_Q(-2,0))\rightarrow H^2(\Oo_Q(-2,-2)^{\oplus 2})$$
is an isomorphism. Now by the Chern class computation we have $\Ee\cong \Oo_Q(1,0)^{\oplus 2}\oplus \Oo_Q(0,1)^{\oplus 2}$. In particular, it is decomposable.

Assume that $\Ee$ is a general extension of $\Ff$ by $\Oo_Q$. Because of the generality, $\Ee$ cannot be $\Oo_Q(0,1)^{\oplus 2} \oplus \Oo_Q(2,0)$ nor $\Oo_Q(1,0)^{\oplus 2} \oplus \Oo_Q(0,2)$. Assume that $\Ee$ is decomposable. Since $(1,1)$ is not an index of $\Ee$, so $\Ee$ cannot be isomorphic to $\Oo_Q(1,1)\oplus \Aa_P$. Thus the only possibility is either $\Oo_Q(1,0)\oplus \Gg$ where $\Gg$ fits into the sequence (2) in Proposition \ref{prop1.2}, or $\Oo_Q(0,1)\oplus \Gg '$. In the former case, we have $h^0(\Gg(0,-1))=1$ and so $h^0(\Ee(0,-1))=1$, which is contradicting to $h^0(\Ee(0,-1))=2$. The latter case is also impossible similarly. Thus $\Ee$ is indecomposable.
\end{proof}

\begin{remark}
If  $\Ff$ is a non trivial extension of  $\Oo_Q(2,0)$ and $\Oo_Q(0,2)$ given in lemma \ref{nost}, we get $h^1(\Ff(-2,-2))=h^1(\Oo_Q(0,-2)\oplus \Oo_Q(-2,0))=2$
and $h^0(\Ff(0,-1))=h^0(\Oo_Q(2,-1)\oplus \Oo_Q(0,1))=2$. So the proof of the above lemma holds also in this case and we can conclude that
there exists an indecomposable extension $\Ee$ of $\Ff$ by trivial factors if and only if $\mathrm{rank}(\Ee)=3$.
\end{remark}

Now assume that $\Ff$ does not split with $h^1(\Ff^\vee)>0$. Firstly let us deal with the case when $\Ff$ is unstable.

\begin{lemma}
There exists no indecomposable vector bundle $\Ee$ of rank at least $3$, whose associated bundle $\Ff$ is non-splitting and unstable.
\end{lemma}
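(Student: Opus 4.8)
The plan is to show that the two hypotheses are in fact incompatible: a rank $2$ globally generated bundle $\Ff$ on $Q$ with $c_1(\Ff)=(2,2)$ cannot be simultaneously non-splitting and unstable. Granting this, the sequence (\ref{heqa}) never has such an $\Ff$ as its quotient, so no bundle $\Ee$ of the asserted kind is produced and the statement holds vacuously.

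First I would record that, with respect to the polarization $\Oo_Q(1,1)$, the slope of $\Ff$ is $\mu(\Ff)=\tfrac12\,c_1(\Ff)\cdot(1,1)=2$, while a line bundle has $\mu(\Oo_Q(a,b))=a+b$. Assume $\Ff$ is unstable. Since $\Ff$ has rank $2$, the first step of its Harder--Narasimhan filtration is a saturated rank $1$ subsheaf, hence (being reflexive of rank $1$ on a smooth surface) a line subbundle $\Oo_Q(a,b)\hookrightarrow \Ff$ with $\mu(\Oo_Q(a,b))=a+b>\mu(\Ff)=2$; as $a+b$ is an integer, this forces $a+b\ge 3$.

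Next I would convert this destabilizing subbundle into an index of large slope. The inclusion $\Oo_Q(a,b)\hookrightarrow\Ff$ gives a nonzero section of $\Ff(-a,-b)$, so $h^0(\Ff(-a,-b))\neq 0$ and there is an index $(a',b')$ of $\Ff$ with $(a',b')\ge (a,b)$, whence $a'+b'\ge a+b\ge 3$. By the referee's remark in Section~2, every index of a globally generated rank $2$ bundle with $c_1=(2,2)$ satisfies $(a',b')\le (2,2)$; together with $a'+b'\ge 3$ this leaves only $(a',b')\in\{(2,1),(1,2),(2,2)\}$. Finally I would invoke the splitting observation made at the beginning of this section: if $(a',b')$ is an index of $\Ff$ with $a'+b'\ge 3$, then $\Ff\cong\Oo_Q(a',b')\oplus\Oo_Q(2-a',2-b')$. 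This contradicts the assumption that $\Ff$ does not split.

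Hence no non-splitting unstable $\Ff$ exists, and a fortiori no indecomposable $\Ee$ of rank $\ge 3$ can have such an $\Ff$ as the quotient in (\ref{heqa}), which is exactly the claim. The one place that needs care is the middle step---passing from an abstract destabilizing subsheaf to a genuine index of slope $\ge 3$: it relies on first saturating, so that the subobject is really a line bundle $\Oo_Q(a,b)$ producing a nonzero element of $H^0(\Ff(-a,-b))$, and then on the a priori bound $(a',b')\le(2,2)$ for indices. Once these two points are in place, the splitting statement closes the argument at once.
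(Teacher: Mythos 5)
Your proposal rests on the claim that a non-splitting, unstable $\Ff$ cannot exist, but this is false under the meaning of ``unstable'' that the lemma (and its role in the paper) requires, namely \emph{not stable} with respect to $\Oo_Q(1,1)$. The decisive step in your argument is ``$a+b>\mu(\Ff)=2$, and as $a+b$ is an integer this forces $a+b\ge 3$''; that inequality is strict only if you read ``unstable'' as ``not semistable.'' If $\Ff$ is merely not stable, the destabilizing line subbundle need only satisfy $a+b\ge 2$, and the borderline case $(a,b)=(1,1)$ genuinely occurs: the bundles of Lemma \ref{le1} are non-split extensions $0\to\Oo_Q(1,1)\to\Ff\to\Ii_Z(1,1)\to 0$ with $\deg(Z)\in\{1,2\}$, globally generated, indecomposable (they appear as $(2,2,3;2)$ and $(2,2,4;2)$ in the main theorem), and not stable since $\Oo_Q(1,1)$ has slope equal to $\mu(\Ff)$. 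Here the quotient is $\Ii_Z(1,1)$ with $Z\neq\emptyset$, so your splitting mechanism (``index of total degree $\ge 3$ forces $Z=\emptyset$'') never engages. The lemma is therefore not vacuous, and indeed it cannot be: it is precisely what excludes $(2,2,3;r\ge 3)$ and the index-$(1,1)$ source for $(2,2,4;r\ge 4)$ from Theorem \ref{hr}. (Also, for a strictly semistable bundle the Harder--Narasimhan filtration is trivial, so your appeal to it does not produce a destabilizing rank $1$ piece in these cases.)

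The content you are missing is the actual work of the paper's proof: for the two non-split families with index $(1,1)$ one computes $h^1(\Ff^\vee)$ and analyzes the extensions of $\Ff$ by trivial factors. When $c_2=3$ one has $h^1(\Ff^\vee)=1$, and the unique non-trivial extension by $\Oo_Q$ is forced to be $\Oo_Q(1,0)\oplus\Oo_Q(0,1)\oplus\Oo_Q(1,1)$, hence decomposable. When $c_2=4$ one has $h^1(\Ff^\vee)=2$, and after the cohomology vanishings $h^0(\Ee(-2,-2))=0$, $h^1(\Ee(-3,-2))=h^1(\Ee(-2,-3))=0$ one applies the splitting criterion of Ancona--Ottaviani (Theorem 6.1 of \cite{AO}) to conclude that $\Oo_Q(1,1)$ splits off as a direct summand of any such $\Ee$. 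Neither of these arguments is replaced by anything in your proposal, so the gap is not a repairable detail but the entire substance of the lemma.
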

\begin{proof}
Since $\Ff$ has $h^1(\Ff^\vee )>0$, so it is one of the following:
\begin{enumerate}
\item $0\to \Oo_Q \to \Oo_Q(1,0)\oplus \Oo_Q(0,1)\oplus \Oo_Q(1,1) \to \Ff \to 0$
\item $0\to \Oo_Q(-1,-1)\to \Oo_Q^{\oplus 2} \oplus \Oo_Q(1,1) \to \Ff \to 0$.
\end{enumerate}

\quad{(1)} Since $h^1(\Ff^\vee)=1$, so the only non-trivial extension of $\Ff$ by a single $\Oo_Q$ cannot have a trivial factor. But it must be $\Oo_Q(1,0)\oplus \Oo_Q(0,1) \oplus \Oo_Q(1,1)$ from the sequence. In particular, it is decomposable.

\quad{(2)} Let us assume that the associated vector bundle of rank $2$ is from the sequence (2). We have $h^1(\Ff^\vee)=2$ and the vector bundle $\Ee$ has no trivial factor only if $\mathrm{rank}(\Ee)\le 4$. Note that $h^0(\Ee(-2,-2))=h^0(\Ff(-2,-2))=0$ and $h^1(\Ee(-3,-2))=h^1(\Ff(-3,-2))=0$. Similarly we have $h^1(\Ee(-2,-3))=0$. Using Theorem 6.1 in \cite{AO} with $t=-1$, $\Ee$ has $\Oo_Q(1,1)$ as its direct summand. In particular, $\Ee$ is decomposable.
\end{proof}

Now let us assume that $\Ff$ is stable and then we have $4\le c_2(\Ff) \le 8$.
\begin{remark}
If $c_2(\Ff)=8$, i.e. $\Ff$ is of maximal type with
$$0\to \Oo_Q(-2,-2) \to \Oo_Q^{\oplus 3} \to \Ff \to 0,$$
we have $h^1(\Ff^\vee) = 6$. Thus there exists an indecomposable globally generated vector bundle of maximal type for each rank $3\le r\le 8$ by Lemma \ref{mi}.
\end{remark}

The remaining cases are when $\Ff$ is from $\mathfrak{M}(c_2)$ with $4\le c_2 \le 6$.

\begin{proposition}
The extension of $\Ff \in \mathfrak{M}(4)$ by $\Oo_Q^{\oplus (r-2)}$ can be indecomposable if and only if $r=3$.
\end{proposition}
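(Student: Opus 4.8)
The plan is to split the argument into three parts: a rank bound, the case $r=4$, and the case $r=3$.

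\smallskip

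\emph{Rank bound.} First I would compute $h^1(\Ff^\vee)$. Since $\Ff$ has rank $2$ with $c_1=(2,2)$, one has $\Ff^\vee\cong\Ff(-2,-2)$. Stability of $\Ff$ forces $h^0(\Ff^\vee)=h^0(\Ff(-2,-2))=0$ (a section would give a destabilizing $\Oo_Q(2,2)\hookrightarrow\Ff$), and since $\Oo_Q(-2,-2)=\omega_Q$, Serre duality gives $h^2(\Ff^\vee)=h^0(\Ff(-2,-2))=0$. As $\chi(\Ff^\vee)=-2$ by Riemann--Roch, we get $h^1(\Ff^\vee)=2$. By the rank bound $\mathrm{rank}(\Ee)\le h^1(\Ff^\vee)+2$, an indecomposable extension has rank at most $4$; since the construction requires $r\ge 3$, only $r=3$ and $r=4$ remain.

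\smallskip

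\emph{The case $r=4$.} Here $\Ee$ sits in $0\to\Oo_Q^{\oplus 2}\to\Ee\to\Ff\to 0$, and the absence of a trivial summand means the two extension classes $e_1,e_2$ are linearly independent in $\Ext^1(\Ff,\Oo_Q)=H^1(\Ff^\vee)$, hence span it. I would split off $\Oo_Q(1,0)$ by Theorem 6.7 of \cite{AO} with $S''=\Oo_Q(-1,0)$ and $t=0$, mirroring the earlier applications of that result. Its hypotheses are $h^0(\Ee(-1,-1))=0$ and $h^1(\Ee(-2,-2))=0$. The first is immediate, since $h^0(\Ee(-1,-1))\hookrightarrow h^0(\Ff(-1,-1))=0$ because $\Ff(-1,-1)$ is stable with $c_1=(0,0)$. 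For the second, twisting the defining sequence by $\Oo_Q(-2,-2)$ and using $h^1(\Oo_Q(-2,-2))=0$ gives $h^1(\Ee(-2,-2))=\ker\partial$, where $\partial\colon H^1(\Ff(-2,-2))\to H^2(\Oo_Q(-2,-2))^{\oplus 2}$ is Yoneda composition with $(e_1,e_2)$. Because $\Oo_Q(-2,-2)=\omega_Q$, this $\partial$ is the \emph{perfect} Serre-duality pairing evaluated against $e_1,e_2$, so it is an isomorphism precisely because the $e_i$ span $H^1(\Ff^\vee)$; thus $h^1(\Ee(-2,-2))=0$. Finally a Riemann--Roch computation gives $h^1(\Ee(-1,-2))=h^1(\Ff(-1,-2))=2\ne 0$, and the criterion then yields $\Oo_Q(1,0)$ as a direct summand, so $\Ee$ is decomposable.

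\smallskip

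\emph{The case $r=3$.} I would produce an indecomposable extension $0\to\Oo_Q\to\Ee\to\Ff\to 0$ with $0\ne e\in\Ext^1(\Ff,\Oo_Q)$. Since the connecting map $\CC\to H^1(\Ff^\vee)$ is injective, $h^0(\Ee^\vee)=0$, so by Lemma \ref{trivial} there is no trivial summand. Next I would check that the only conceivable line-bundle summands are $\Oo_Q(1,0)$ and $\Oo_Q(0,1)$: any summand $\Oo_Q(a,b)\ge(1,1)$ forces $h^0(\Ee(-1,-1))\ne 0$, contradicting $h^0(\Ff(-1,-1))=0$; and $\Oo_Q(2,0),\Oo_Q(0,2)$ are excluded because $h^0(\Ee(-2,0))=h^0(\Ee(0,-2))=0$, using $h^0(\Ff(-2,0))=h^0(\Ff(0,-2))=0$ from the stability of $\Ff(-1,-1)$. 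Finally, for general $e$ neither $\Oo_Q(1,0)$ nor $\Oo_Q(0,1)$ splits off: a summand $\Oo_Q(1,0)$ would force $h^0(\Ee^\vee(1,0))\ne 0$, but this group is the kernel of the cup-product map $H^0(\Oo_Q(1,0))\to H^1(\Ff^\vee(1,0))$ determined by $e$, a map between two-dimensional spaces that is injective for general $e$. Having no line-bundle summand, a general rank-$3$ extension is indecomposable.

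\smallskip

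The main obstacle is the vanishing $h^1(\Ee(-2,-2))=0$ in the rank-four case: it is exactly this condition, equivalent to the extension classes spanning $H^1(\Ff^\vee)$, that both activates the Ancona--Ottaviani criterion and fails for $r=3$ (where instead $h^1(\Ee(-2,-2))=1$), so it is precisely what separates the two ranks. On the existence side the delicate point is the non-degeneracy of the cup product $H^0(\Oo_Q(1,0))\otimes H^1(\Ff^\vee)\to H^1(\Ff^\vee(1,0))$, which guarantees that the general rank-$3$ extension avoids both possible line-bundle summands.
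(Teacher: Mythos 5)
Your overall architecture (rank bound from $h^1(\Ff^\vee)=2$, decomposability for $r=4$, existence for $r=3$) matches the paper, but both halves of the hard work are done by a genuinely different route: for $r=4$ you invoke the Ancona--Ottaviani splitting criterion after showing $h^0(\Ee(-1,-1))=h^1(\Ee(-2,-2))=0$ (the Serre-duality argument that the connecting map is injective exactly when the two extension classes span $H^1(\Ff^\vee)$ is correct and is the same mechanism the paper uses in its $c_1=(1,2)$ analysis), whereas the paper argues directly with the evaluation map $\Oo_Q(1,0)^{\oplus 2}\to\Ee$ and the saturation of its image; for $r=3$ you try a purely cohomological non-degeneracy argument, whereas the paper exhibits the bundle explicitly as $\Ww=\mathrm{coker}\bigl(\Oo_Q\to\Oo_Q(1,0)^{\oplus 2}\oplus\Oo_Q(0,1)^{\oplus 2}\bigr)$ and rules out decompositions by the ruling-exchange symmetry $\sigma$ together with the fact that $(1,1)$ is not an index. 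Your reduction of indecomposability to the absence of line-bundle summands, and the elimination of all candidates other than $\Oo_Q(1,0)$ and $\Oo_Q(0,1)$, are fine.

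The gap is in the last step of the $r=3$ case. You claim that the cup-product map $H^0(\Oo_Q(1,0))\to H^1(\Ff^\vee(1,0))$, $s\mapsto s\cdot e$, is injective for general $e$ ``because it is a map between two-dimensional spaces.'' That is not a reason: the map depends on $e$ through the bilinear pairing $\mu\colon H^0(\Oo_Q(1,0))\otimes H^1(\Ff^\vee)\to H^1(\Ff^\vee(1,0))$, and a bilinear map $\CC^2\otimes\CC^2\to\CC^2$ can perfectly well have $\mu(-,e)$ degenerate for \emph{every} $e$ (e.g.\ if $\mu$ kills a fixed line in the first factor). Indeed, if you push the computation into the model $H^1(\Ff^\vee)\cong H^0(\Oo_Z(-1,0))$ coming from $0\to\Oo_Q(1,0)\to\Ff\to\Ii_Z(1,2)\to 0$ with $\deg Z=2$, the induced map to the quotient $H^1(\Ii_Z)$ always has a one-dimensional kernel, so injectivity hinges on a nontrivial contribution from the subspace $H^1(\Oo_Q(0,-2))$ that you never examine. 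The claim is true, but the only way to see it is to produce one extension where it holds --- which is exactly what the paper's explicit $\Ww$ does: from $0\to\Ww^\vee(1,0)\to\Oo_Q^{\oplus 2}\oplus\Oo_Q(1,-1)^{\oplus 2}\to\Oo_Q(1,0)\to 0$ one reads off $h^0(\Ww^\vee(1,0))=0$ for general $\phi$ (and similarly for the $(0,1)$ twist). Without some such witness, the entire existence half of the proposition is unproved. A secondary, smaller caveat: your $r=4$ argument leans on Theorem 6.7 of \cite{AO} with hypotheses you infer from the paper's earlier usage rather than verify against the actual statement; the conclusion is consistent with the paper's answer $\Ee\cong\Oo_Q(1,0)^{\oplus 2}\oplus\Oo_Q(0,1)^{\oplus 2}$, but you should confirm the hypotheses of that theorem before relying on it.
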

\begin{proof}
Let $\Ff \in \mathfrak{M}(4)$ have an index $(1,0)$. Then it fits into the exact sequence
$$0\to \Oo_Q(1,0) \to \Ff \to \Ii_Z(1,2) \to 0,$$
where $\deg (Z)=2$. Note that $h^1(\Ff^\vee)=h^1(\Ff(-2,-2))=h^1(\Ii_Z(-1,0))=2$. We have $h^1(\Ff ^\vee ) = h^1(\Ff (-2,-2)) = h^1(\Ii _Z(-1,0)) = \deg (Z) =2$.
Hence for each $r\ge 5$ and extension of $\Ff$ by $\Oo _Q^{\oplus (r-2)}$ has a trivial factor. Hence it is sufficient to check the cases
$r=3$ and $r=4$.
Let us take any extension $\Ee$ of $\Ff$ by $\Oo_Q^{\oplus 2}$
$$0 \to \Oo_Q^{\oplus 2} \to \Ee \to \Ff \to 0.$$
There is a non-zero map $\phi: \Oo _Q(1,0)^{\oplus 2} \to \Ee$. Set $\Gg:= \mathrm{Im}(\phi)$ and $c:= \mathrm{rank} (\phi )$.
First assume $c=1$. Since $\Gg$ is globally generated, torsion free, a quotient of $\Oo _Q(1,0)$ and $h^0(\Gg ^{\vee \vee })(-2,0)) \le h^0(\Ee (-2,0)) = 0$,
we get $\Gg= \Oo _Q(1,0)$; this is the case leading to $\Ww$, the indecomposable bundle of rank $3$ below, if it exists.

 Now assume $c=2$. Hence $\phi$ is injective and so $\Gg \cong \Oo _Q(1,0)^{\oplus 2}$. Let $\Cc$ be
the saturation of $\Gg$ in $\Ee$, i.e. the only rank two subsheaf of $\Ee$ containing $\Gg$ and then $\Dd:= \Ee/\Cc$ is a torsion free and globally generated sheaf of rank $2$. We have $c_1(\Cc )=(a,b)$
for some $a\ge 2$ and $b\ge 0$. Since $\Dd$ is globally generated, we get $a=2$ and $b\le 2$. First assume $b=0$. Since $\Gg \subseteq \Cc$ and
$\Gg$ and $\Cc$ are vector bundles with the same rank and the same determinant, then $\Cc = \Gg$. Since
$h^i(\Gg (0,-1)) = 0$, $i=0,1$, and $h^1(\Gg (0,-2))=0$ we get $h^0(\Dd (0,-1)) = h^0(\Ee (0,-1))=2$ and $h^0(\Dd (0,-2))=0$. We get $\Dd \cong \Oo_Q(0,1)^{\oplus 2}$
by the Chern class counting or $h^0(\Dd )=4$ and many other reasons.
Thus we have $\Ee \cong \Oo_Q(1,0)^{\oplus 2}\oplus \Oo_Q(0,1)^{\oplus 2}$. Now assume $b>0$. Note that $\Dd$ is globally generated , $c_1(\Dd)=(0,2-b)$ and $\mathrm{rank}(\Dd)=2$. Even if $\Dd$ is only torsion free, we have an exact sequence
$$0 \to \Oo _Q \to \Dd \to \Ii _A(0,1)\to 0$$
where $\Ii _A(0,1)$ is globally generated and thus we have $A=\emptyset$ and $\Dd \cong \Oo _Q\oplus \Oo_Q(0,1)$. Since $\Dd$ is a quotient of $\Ee$, Lemma \ref{trivial}
that $\Ee$ has a trivial factor, a contradiction. Hence there is no indecomposable extension of $\Ff$ by $\Oo_Q^{\oplus 2}$.

Now we prove the existence of the indecomposable extension of $\Ff$ by $\Oo_Q$.
Let us define $V: = H^0(\Oo _Q(1,0)^{\oplus 2}\oplus \Oo _Q(0,1)^{\oplus 2})$.
For a generally fixed $\phi : \Oo _Q\to \Oo _Q(1,0)^{\oplus 2}\oplus \Oo _Q(0,1)^{\oplus 2}$, let us set $\Ww := \mathrm{coker} (\phi)$. Since $\Oo _Q(1,0)^{\oplus 2}\oplus \Oo _Q(0,1)^{\oplus 2}$
is globally generated, $\Ww$ is a globally generated vector bundle of rank $3$ with $c_1=(2,2)$ and $c_2=4$. We claim that $\Ww$ is indecomposable. Let $\sigma : Q\to Q$
be the automorphism which exchanges the two rulings of $Q$. Since $\sigma ^\ast (\Oo _Q(1,0)^{\oplus 2}\oplus \Oo _Q(0,1)^{\oplus 2})
\cong \Oo _Q(1,0)^{\oplus 2}\oplus \Oo _Q(0,1)^{\oplus 2}$ and $\phi$ is general, the number of factors $\Oo _Q(1,0)$ of $\Ww$ is equal to the number
of factors $\Oo _Q(0,1)$ of $\Ww$. $\Oo _Q(1,1)$ is not a factor of $\Ww$, because $(1,1)$ is not an index of $\Ff$. Thus we have $\Ww \cong \Oo _Q(1,0)^{\oplus a}\oplus \Oo _Q(0,1)^{\oplus a}$ with $2a =3$, a contradiction.
\end{proof}

\begin{remark}
The proof gives a nice parameter space for the rank $3$ bundles, i.e. the projectivisation of the set of all nowhere vanishing sections of $H^0( \Oo_Q(1,0)^{\oplus 2}\oplus \Oo_Q(0,1)^{\oplus 2})$.
\end{remark}

Let $\mathfrak{G}_r$ with $r=3,4,5$ be the set of all vector bundles without trivial factors and which
are an extension of some $\Ff \in \mathfrak{M}(5)$ by $\Oo _Q^{\oplus (r-2)}$. Since
$h^1(\Ff ^\vee ) =3$ for all $\Ff \in \mathfrak {M}(5)$, each set $\mathfrak{G}_r$ is non-empty
and parametrized by an irreducible variety.

\begin{remark}\label{ss1}
Fix any $P\in \PP^3\setminus Q$ and look at the exact sequence
$$0 \to \Oo_Q(-1) \to \Oo _Q^{\oplus 3}\to \Aa _P\to 0$$
induced by the Euler sequence of $T\PP^2$. We get $h^1(\Aa _P^{\vee}(1,0)) =h^1(\Aa_P (0,-1))=0$.
Hence any extension of $\Aa _P$ by $\Oo _Q(1,0)$ is trivial. For the same reason any extension
of $\Aa _P$ by  $\Oo _Q(0,1)$ is trivial.
\end{remark}

\begin{proposition}\label{5.12} $ $
\begin{enumerate}
\item A general bundle $\Ee \in \mathfrak{G}_3$ is stable and thus indecomposable.
\item $\mathfrak{G}_4 = \{\Oo _Q(1,0)\oplus \Oo _Q(0,1)\oplus \Aa _P\}_{P\in \PP^3\setminus Q}$.
\item $\mathfrak{G}_5 = \{\Oo _Q(1,0)\oplus \Oo _Q(0,1)\oplus T\PP^3(-1)|_Q \}$. 
\end{enumerate}
\end{proposition}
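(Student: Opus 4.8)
The plan is to handle the three parts in order, deriving (3) from (2) and treating (2) as the technical core.

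\emph{Part (1).} Since $\Ee$ is an extension of the globally generated bundle $\Ff$ by $\Oo_Q$, it is itself globally generated, and a stable bundle is indecomposable; so it suffices to prove that a general $\Ee\in\mathfrak{G}_3$ is stable with respect to $\Oo_Q(1,1)$, for which $\mu(\Ee)=4/3$. First I would bound line subbundles: a nonzero $\Oo_Q(c,d)\hookrightarrow\Ee$ either dies in $\Ff$, giving $c+d\le0$, or injects into $\Ff$, where stability of $\Ff$ forces $c+d\le1$; either way the slope is $<4/3$, with no genericity needed. For a saturated rank-$2$ subbundle $\Cc$, intersecting with the sub-$\Oo_Q$ shows the slope is at most $1/2$ unless $\Cc\cap\Oo_Q=0$, in which case $\Cc\hookrightarrow\Ff$ and $\det\Cc=\det\Ff(-E)$ for an effective $E$; a destabilizing $\Cc$ must then have $\det\Cc\in\{(2,2),(2,1),(1,2)\}$. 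The case $(2,2)$ makes $\Ee/\Cc$ a rank-$1$ torsion-free sheaf with $c_1=(0,0)$, hence a nonzero map $\Ee\to\Oo_Q$, so Lemma \ref{trivial} produces a trivial factor, a contradiction.

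The cases $\det\Cc=(2,1),(1,2)$ are where I expect to use genericity. Such $\Cc$ arise from quotients $\Ff\to\Oo_L$ along a ruling line $L$, which exist only for the finitely many jumping lines of the stable bundle $\Ff$; and lifting $\Cc\hookrightarrow\Ff$ to a subbundle of $\Ee$ forces the extension class into the kernel of $\Ext^1(\Ff,\Oo_Q)\to\Ext^1(\Cc,\Oo_Q)$, a proper subspace of $H^1(\Ff^\vee)=\CC^3$. A general extension class avoids this finite union of proper subspaces, so a general $\Ee\in\mathfrak{G}_3$ is stable.

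\emph{Part (2).} Fix $\Ff\in\mathfrak{M}(5)$ and $\Ee\in\mathfrak{G}_4$. Using that $(1,0)$ and $(0,1)$ are indices of $\Ff$, together with $\Ext^1(\Oo_Q(1,0),\Oo_Q)=H^1(\Oo_Q(-1,0))=0$ (and similarly for $(0,1)$), I would lift $\Oo_Q(1,0)\hookrightarrow\Ff$ and $\Oo_Q(0,1)\hookrightarrow\Ff$ to subbundles of $\Ee$ and build a filtration of $\Ee$ with successive quotients $\Oo_Q(1,0)$, $\Oo_Q(0,1)$ and a globally generated rank-$2$ bundle $\Gg$ with $c_1=(1,1)$, $c_2=2$. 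Since $\Ee$ has no trivial factor, neither does $\Gg$, so $\Gg\cong\Aa_P$ by Proposition \ref{1.1}. The filtration then splits because all backward extension groups vanish: $\Ext^1(\Oo_Q(0,1),\Oo_Q(1,0))=H^1(\Oo_Q(1,-1))=0$, while $\Ext^1(\Aa_P,\Oo_Q(1,0))=\Ext^1(\Aa_P,\Oo_Q(0,1))=0$ by Remark \ref{ss1}. This gives $\Ee\cong\Oo_Q(1,0)\oplus\Oo_Q(0,1)\oplus\Aa_P$. For the converse, each such bundle is globally generated, and two general sections cut out a trivial rank-$2$ subbundle whose quotient is a stable bundle in $\mathfrak{M}(5)$, placing it in $\mathfrak{G}_4$ with no trivial factor.

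\emph{Part (3).} An $\Ee\in\mathfrak{G}_5$ is an extension of $\Ff$ by $\Oo_Q^{\oplus3}$ whose three classes, by the no-trivial-factor hypothesis, span $H^1(\Ff^\vee)=\CC^3$. Quotienting by one trivial summand yields $\Ee_4\in\mathfrak{G}_4$, so by part (2) $\Ee_4\cong\Oo_Q(1,0)\oplus\Oo_Q(0,1)\oplus\Aa_P$, and $\Ee$ sits in $0\to\Oo_Q\to\Ee\to\Ee_4\to0$ with class in $\Ext^1(\Ee_4,\Oo_Q)=H^1(\Ee_4^\vee)$. As $H^1(\Oo_Q(-1,0))=H^1(\Oo_Q(0,-1))=0$ while $H^1(\Aa_P^\vee)=\CC$, this class only deforms the $\Aa_P$ summand, and the absence of a trivial factor forces it nonzero, turning $\Aa_P$ into its unique nonsplit extension $\Bb_P=T\PP^3(-1)|_Q$; hence $\Ee\cong\Oo_Q(1,0)\oplus\Oo_Q(0,1)\oplus T\PP^3(-1)|_Q$, and conversely this bundle lies in $\mathfrak{G}_5$. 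The main obstacle is the filtration in part (2): showing $\Ee$ really admits $\Oo_Q(1,0)$ and $\Oo_Q(0,1)$ as successive subbundles with the correct rank-$2$ quotient, and that no trivial summand intervenes, since once the three graded pieces are in place the splitting is automatic from the vanishings above. I also expect to take extra care over the loci in $\mathfrak{M}(5)$ where $h^0(\Ff(-1,0))$ jumps, where the lifted line subbundle need not be unique.
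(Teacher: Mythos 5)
Your parts (2) and (3) follow the paper's architecture fairly closely (lift $\Oo_Q(1,0)$ and $\Oo_Q(0,1)$ into $\Ee$, identify the rank-two quotient as $\Aa_P$ via Proposition \ref{1.1}, split everything using Remark \ref{ss1}, and reduce (3) to (2)), but the step you yourself flag as "the main obstacle" is precisely where the paper's proof lives, and it is not a formality. You must show that the map $u=(j_1,j_2):\Oo_Q(1,0)\oplus\Oo_Q(0,1)\to\Ee$ is injective, that its image is saturated, and that the cokernel is \emph{locally free} with no trivial factor --- and you must do this for \emph{every} $\Ee\in\mathfrak{G}_4$, since (2) asserts an equality of sets. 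The paper needs several distinct ideas here: injectivity via "$(1,1)$ is not an index", saturatedness for general $\Ee$ via the ruling-exchange involution plus Lemma \ref{trivial}, local freeness of the cokernel by comparison with the model bundle $\Oo_Q(1,0)\oplus\Oo_Q(0,1)\oplus\Aa_P$, and then a separate degeneration argument (semicontinuity of $h^0(\Ee^\vee(1,0))$ and of $h^0(T\PP^3(-1)|_Q\otimes\Ee^\vee)$, followed by a second-Chern-class comparison at a point $O\in Q$ to rule out a non-locally-free kernel) to pass from general to arbitrary $\Ee$. None of this is in your proposal, so (2) is not yet proved.

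There is also a concrete error in part (1). Saturated rank-two subsheaves $\Cc'\subset\Ff$ with $c_1(\Cc')=(2,1)$ do \emph{not} occur only for jumping lines: for every $L\in|\Oo_Q(0,1)|$ one has $\deg(\Ff|_L)=2$, so $\Ff\to\Ff|_L\to\Oo_L(b)$ (projection onto the lower-degree summand, $b=1$ for a non-jumping line) is surjective with kernel of first Chern class $(2,1)$. You therefore have a positive-dimensional family of candidate destabilizers, and "a general extension class avoids a finite union of proper subspaces" does not follow from what you wrote; a one-parameter family of hyperplanes in $H^1(\Ff^\vee)=\CC^3$ can sweep out everything. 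The statement is still true, but to see it you must compute the lifting obstruction: from $0\to\Cc'\to\Ff\to\Oo_L(b)\to0$ the kernel of $\Ext^1(\Ff,\Oo_Q)\to\Ext^1(\Cc',\Oo_Q)$ is the image of $\Ext^1(\Oo_L(b),\Oo_Q)\cong H^0(L,\Oo_L(-b))$, which vanishes for $b=1$ (non-jumping lines, so those $\Cc'$ never lift when $e\ne 0$) and is one-dimensional for the finitely many jumping lines. The paper avoids this computation entirely by reversing your logical order: it first proves a general $\Ee\in\mathfrak{G}_3$ is \emph{indecomposable} (a line-bundle factor would have to be $\Oo_Q(1,0)$ or $\Oo_Q(0,1)$ since $(1,1)$ is not an index; by the involution symmetry of $\mathfrak{G}_3$ both would occur, forcing the third factor to be $\Oo_Q(1,1)$, which is impossible), and only then deduces stability, using indecomposability to kill the quotients $\Oo_Q(1,0)$, $\Oo_Q(0,1)$ that your cases $(2,1),(1,2)$ produce. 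Either repair --- the obstruction computation or the paper's reordering --- is needed before part (1) stands.
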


\begin{proof}
\quad (a) Let us fix a general bundle $\Ee \in \mathfrak{G}_3$. We claim that $\Ee$ is indecomposable. Assume that this is not the case and so $\Ee$ has at least one line bundle, say $\Ll$, as a factor. Since $(1,1)$ is not an index of $\Ee$ and
$\Ee$ has no trivial factor, so we have either $\Ll\cong \Oo _Q(1,0)$ or $\Ll\cong \Oo _Q(0,1)$. Let us take $(a,b) \in \{(1,0),(0,1)\}$ such that
$\Ll \cong \Oo _Q(a,b)$. Since $\mathfrak{M}(5)$ is invariant by the automorphism of $Q$ which exchanges the two rulings of $Q$ (see Remark \ref{re44}),
$\mathfrak{G}_3$ is irreducible and $\Ee$ is irreducible, $\Oo _Q(b,a)$ is another factor of $\Ee$. Thus we should have $\Ee \cong \Oo_Q(1,0)\oplus \Oo_Q(0,1)
\oplus \Oo _Q(1,1)$, but this is impossible since $(1,1)$ is not an index of $\Ee$. Hence $\Ee$ is indecomposable.

Since $(1,1)$ is not an index of $\Ee$, every rank
$1$ subsheaf of $\Ee$ has $\Oo _Q(1,1)$-slope less than $4/3$. If $\Ee$ is not stable, we can take a rank $2$ saturated subsheaf $\Gg$ of $\Ee$ with maximal $\Oo _Q(1,1)$-slope $\alpha$
with $\alpha \ge 4/3$. Then the rank $1$ torsion free sheaf $\Ee/\Gg$ is globally generated and it has $\Oo _Q(1,1)$-slope at most $4/3$. Thus we have $(\Ee/\Gg )^{\vee \vee}
\cong \Oo _Q(c,d)$ with $(c,d)\in \{(0,0),(1,0),(0,1)\}$. Hence $\Ee/\Gg \cong \Ii _E(c,d)$ for some zero-dimensional scheme
$E\subset Q$. Since $\Oo _Q$ is not a direct factor of $\Ee$, we have $(c,d) \ne (0,0)$. We also have $E = \emptyset$ since $\Ii_E(c,d)$ is globally generated where $(c,d)\in \{(1,0),(0,1)\}$.  Hence $\Oo _Q(c,d)$ is a quotient of $\Ee$. Since $(c,d)$ is an index of $\Ee$, we easily get
that $\Oo_Q(c,d)$ is a factor of $\Ee$, a contradiction.

\quad (b) Fix a general $\Ee\in \mathfrak{G}_4$. Since $(1,0)$ (resp. $(0,1)$) is an index of $\Ee$, there is a non-zero map
$j_1: \Oo _Q(1,0) \to \Ee$ (resp. $j_2: \Oo _Q(0,1) \to \Ee$) and it is an injective map of sheaves. Consider the map $u =(j_1,j_2): \Oo_Q(1,0)\oplus \Oo _Q(0,1)
\to \Ee$. We first claim that $u$ has not rank $1$, i.e. that it is injective. Assume that $\mathrm{Im}(u)$ has rank $1$ and call $\Rr$ its saturation in $\Ee$.
Since $Q$ is a smooth surface, $\Rr$ is a line bundle, say $\Rr \cong \Oo _Q(a,b)$. Since neither $j_1$ nor $j_2$ is zero, we
have $a\ge 1$ and $b\ge 1$ and so $\Ee$ has $(1,1)$ as an index, a contradiction. Since $u$ is injective, the sheaf $\mathrm{Im}(u)$
is isomorphic to $\Oo _Q(1,0)\oplus \Oo _Q(0,1)$. We now check that $\mathrm{Im}(u)$ is saturated in $\Ee$, i.e. that $\Uu :=\mathrm{coker}(u)$ is torsion free.
Assume that $\mathrm{Im}(u)$ is not saturated in $\Ee$ and call $\Gg$ its saturation. Since $Q$ is a smooth surface, $\Gg$ is a vector bundle of rank $2$. Set $(a,b):= c_1(\Gg )$.
Since $\Ee$ is a general element of the irreducible family $\mathfrak{G}_4$ and the family $\mathfrak{G}_4$ is invariant under the automorphism of $Q$ which permutes
the two rulings of $Q$, we have $a=b$. Since $\mathrm{Im} (u) \nsubseteq \Gg$ and these two bundles have the same rank, we get $a>1$.
The rank $2$ torsion free sheaf $\Ee /\Gg$ is spanned and $c_1(\Ee /\Gg )(2-a,2-a)$. We get $a=2$ and that $\Ee /\Gg$ is trivial. Lemma \ref{trivial} gives
that $\Ee$ has a trivial factor, a contradiction. Hence $\mathrm{Im} (u)$ is saturated and so $\Uu$ is torsion free. Fix $P\in \PP^3 \setminus Q$.
For the bundle $\Oo _Q(1,0)\oplus \Oo _Q(0,1)\oplus \Aa _P$, the maps $j_1, j_2, u$ are uniquely defined and $\mathrm{coker} (u)$ is locally free. Thus $\Uu$ is locally free for a general $\Ee\in \mathfrak{G}_4$. The bundle $\Uu$ is a globally generated vector bundle of rank $2$ with $c_1(\Uu )=(1,1)$, no trivial factor and $h^0(\Uu )=h^0(\Ee )-4 =3$. We have $\Uu \cong \Aa _P$ for some $P\in \PP ^3\setminus Q$ by Proposition \ref{1.1}. By Remark \ref{ss1} a general $\Dd \in \mathfrak {G}_4$ is isomorphic to one of the bundles $\Oo_Q(1,0)\oplus \Oo_Q(0,1)\oplus \Aa _P$.

Let us fix any $\Ee \in \mathfrak {G}_4$ and then $\Ee$ is a degeneration of the family $\{\Oo _Q(1,0)\oplus \Oo_Q(0,1)\oplus \Aa _P\}_{P\in \PP^3\setminus Q}$. By semicontinuity we have $h^0(\Ee ^\vee (1,0) ) >0$ and $h^0(\Ee ^\vee (0,1) )>0$. Hence there is a map $f: \Ee \to \Oo _Q(1,0)\oplus \Oo _Q(1,0)$
with $\mathrm{Im}(f)$ of rank $2$. Since $\Ee$ has no trivial factor, $\mathrm{Im} (f)$ has no trivial factor by Lemma \ref{trivial}. Since $\mathrm{Im}(f)$ is globally generated, we get that $f$ is surjective and so $\ker (f)$ is a rank $2$ vector
bundle. Since each $\Aa _P$ is a quotient of $T\PP^3(-1)|_Q$, the semicontinuity theorem gives $h^0(T\PP^3(-1)|_Q \otimes \Ee ^\vee )>0$.
Let us fix a non-zero map $h: T\PP^3(-1)|_Q \to \Ee$.
Since $T\PP^3(-1)|_Q$ has neither $(1,0)$ nor $(0,1)$ as an index, we have $f\circ h = 0$, i.e. $\mathrm{Im} (h) \subseteq \ker (f)$. Since $T\PP^3(-1)|_Q$ is stable and neither $(2,0)$ nor $(1,1)$ is
an index of $\Ee$, $\mathrm{Im} (h)$ has rank $2$. Since $\mathrm{Im} (h)$ is globally generated, we get that $\mathrm{Im} (h)$ is a quotient of $\Oo _Q^{\oplus 3}$ by a map $\Oo _Q(-1,-1)\to \Oo _Q^{\oplus 3}$ defined by
$3$ linearly independent forms vanishing at some point $O\in \PP^3$. We also see that $\ker (f)$ is the saturation of $\mathrm{Im} (h)$ in $\Ee$. If $O\notin Q$, then
$\mathrm{Im}(h)$ is saturated in $\Ee$ and so $\mathrm{Im} (h) =\ker (f) \cong \Aa _O$. It implies that $\Ee \cong \Oo _Q(1,0)\oplus \Oo _Q(0,1)\oplus \Aa _O$. Now assume
$O\in Q$ and then $\mathrm{Im} (h)$ is not locally free. In particular, we have $\mathrm{Im} (h)\ne \ker (f)$. Note that the sheaves $\mathrm{Im} (h)$ and $\ker (f)$ have the same Chern numbers, while $\ker (f)/\mathrm{Im} (h)$ is supported
by a single point $O$. Thus we have $c_2(\mathrm{Im} (h)) > c_2(\ker (f))$, a contradiction.

\quad (c) Let us take any $\Ee \in \mathfrak{G}_5$. Since $\Ee$ is an extension of some $\Gg\in \mathfrak{G}_4$ by $\Oo _Q$, it is sufficient to use
the rank $4$ case.
\end{proof}

%\begin{proposition}
%There exists an indecomposable and globally generated vector bundle of rank $r\ge 2$ on $Q$ with $c_1=(2,2)$ and $c_2=5$ if and only if $2\le r \le 5$.
%\end{proposition}
%\begin{proof}
%$\Ff\in \mathfrak{M}(5)$ fits into the sequence
%$$0\to \Oo_Q(1,0) \to \Ff \to \Ii_Z(1,2) \to 0$$
%with $\deg (Z)=3$ and so we have $h^1(\Ff^\vee)=3$. So there exists an indecomposable extension $\Ee$ of $\Ff$ by trivial factors only if $\mathrm{rank}(\Ee)\le 5$. Let $\Gg$ be a globally generated vector bundle of rank $2$ with $c_1=(1,2)$ and $c_2=3$ and so it fits into
%$$0\to \Oo_Q(-1,-1) \to \Oo_Q^{\oplus 2} \oplus \Oo_Q(0,1) \to \Gg \to 0.$$
%In particular, we have $h^1(\Gg^\vee (1,0))=h^1(\Gg(0,-2))=2$ and so there exists a non-trivial extension $\Ee$ of $\Gg$ by $\Oo_Q(1,0)$. Since $h^0(\Ee^\vee)=0$ and $h^0(\Ee(-1,0))=h^1(\Ff(-1,0))=1$, so $\Ee$ must be an indecomposable vector bundle of rank $3$ with given conditions in the assertion.
%\end{proof}

Now the only remaining case is when $c_2(\Ff)=6$ and this case can be divided into two parts depending on the index.

\begin{proposition}
There exists an indecomposable and globally generated vector bundle of rank $r\ge 3$ on $Q$ with $c_1=(2,2)$, $c_2=6$ with index $(1,0)$ or $(0,1)$ if and only if we have $r\le 5$.
\end{proposition}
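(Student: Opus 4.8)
The plan is to first compute $h^1(\Ff^\vee)$, then to reduce every extension of $\Ff$ to an extension of a maximal type bundle by the line bundle $\Oo_Q(1,0)$, and finally to read off (in)decomposability from the surjectivity of a single multiplication map.

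Since $\Ff$ has index $(1,0)$ and $c_2=6$, Lemma \ref{le2} gives $0\to \Oo_Q(1,0)\to \Ff\to \Ii_Z(1,2)\to 0$ with $Z$ a complete intersection of two curves of bidegree $(1,2)$, so $\deg Z=4$. As $\Ext^i(\Oo_Q(1,0),\Oo_Q)=H^i(\Oo_Q(-1,0))=0$ for $i=0,1$, applying $\Hom(-,\Oo_Q)$ yields an isomorphism $\Ext^1(\Ii_Z(1,2),\Oo_Q)\xrightarrow{\sim}\Ext^1(\Ff,\Oo_Q)=H^1(\Ff^\vee)$. Using $\Ff^\vee\cong\Ff(-2,-2)$ and the twisted sequence $0\to\Oo_Q(-1,-2)\to\Ff(-2,-2)\to\Ii_Z(-1,0)\to 0$, whose left term is acyclic, I obtain $h^1(\Ff^\vee)=h^1(\Ii_Z(-1,0))=\deg Z=4$. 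Hence the rank bound $\mathrm{rank}(\Ee)\le h^1(\Ff^\vee)+2$ forces $r\le 6$, and it remains to realize $r=3,4,5$ and to exclude $r=6$ (the case of index $(0,1)$ being identical after exchanging the two rulings).

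The key device is the following. For an extension $0\to\Oo_Q^{\oplus(r-2)}\to\Ee\to\Ff\to 0$ I transport its class through the isomorphism above to obtain $0\to\Oo_Q^{\oplus(r-2)}\to\Gg\to\Ii_Z(1,2)\to 0$ together with a morphism of extensions whose right vertical arrow is $\Ff\to\Ii_Z(1,2)$; the snake lemma then gives
$$0\to \Oo_Q(1,0)\to \Ee\to \Gg\to 0.$$
Since $\Ii_Z(1,2)$ is the quotient in the maximal type rank two bundle attached to $Z$, Remark \ref{ff1} shows $\Gg$ is again of maximal type, with resolution $0\to\Oo_Q(-1,-2)\to\Oo_Q^{\oplus r}\to\Gg\to 0$; when the $r-2$ classes are independent, $\Gg$ has no trivial factor and is indecomposable by Lemma \ref{mi}. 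Dualizing this resolution and twisting by $(1,0)$ gives $0\to\Gg^\vee(1,0)\to\Oo_Q(1,0)^{\oplus r}\to\Oo_Q(2,2)\to 0$, whence
$$\Ext^1(\Gg,\Oo_Q(1,0))=H^1(\Gg^\vee(1,0))=\mathrm{coker}\big(H^0(\Oo_Q(1,0))^{\oplus r}\to H^0(\Oo_Q(2,2))\big),$$
the cokernel of $(s_j)\mapsto\sum s_jf_j$ for the forms $f_j$ defining $\Gg$. Whether the displayed sequence splits is thus governed by this one map.

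For $r=6$ with $\Ee$ indecomposable, $\Ee$ has no trivial factor, so the four classes form a basis, $\Gg$ is the universal extension, and the $f_j$ span $H^0(\Oo_Q(1,2))$; the map is then the full multiplication $H^0(\Oo_Q(1,0))\otimes H^0(\Oo_Q(1,2))\to H^0(\Oo_Q(2,2))$, which is surjective, being the tensor product of the (surjective) multiplication maps on the two $\PP^1$ factors. Hence $\Ext^1(\Gg,\Oo_Q(1,0))=0$, the sequence splits, and $\Ee\cong\Oo_Q(1,0)\oplus\Gg$ is decomposable, excluding $r=6$. For $r=3,4$ the target has dimension $9>2r$, so the cokernel is nonzero and a non-split extension exists; for $r=5$ I would instead take the $f_j$ to be a basis of the hyperplane of sections of $\Oo_Q(1,2)$ vanishing at a fixed point $(p,q)\in Q$, for which $\mathrm{ev}_p\otimes\mathrm{ev}_q$ annihilates the image, so again $\Ext^1(\Gg,\Oo_Q(1,0))\ne 0$. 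In each case the maximal type resolution gives $\Hom(\Oo_Q(1,0),\Gg)=H^0(\Gg(-1,0))=0$, which together with the indecomposability of $\Gg$ and non-splitness forces $\End(\Ee)$ to be local, so $\Ee$ is indecomposable; the invariants $c_1=(2,2)$, $c_2=6$, global generation and index $(1,0)$ are immediate from the sequence. The main obstacle is exactly $r=5$: the dimension count no longer forces $\Ext^1(\Gg,\Oo_Q(1,0))\ne 0$, and one must exhibit the special maximal type $\Gg$ above for which the multiplication map fails to be surjective while $\Gg$ stays indecomposable.
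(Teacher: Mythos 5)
Your reduction of every extension $0\to\Oo_Q^{\oplus(r-2)}\to\Ee\to\Ff\to 0$ to a sequence $0\to\Oo_Q(1,0)\to\Ee\to\Gg\to 0$ via the isomorphism $\Ext^1(\Ii_Z(1,2),\Oo_Q)\cong\Ext^1(\Ff,\Oo_Q)$ is sound, and your rank $6$ exclusion is correct and genuinely shorter than the paper's: Proposition \ref{split} must first prove, by a uniformity and splitting-type analysis, that the unique map $\Oo_Q(1,0)\to\Ee$ has locally free cokernel before invoking the surjectivity of $H^0(\Oo_Q(1,0))\otimes H^0(\Oo_Q(1,2))\to H^0(\Oo_Q(2,2))$, whereas your pullback construction hands you the quotient directly as the extension of $\Ii_Z(1,2)$ by $\Oo_Q^{\oplus 4}$ attached to a basis of $\Ext^1(\Ii_Z(1,2),\Oo_Q)\cong\bigoplus_{p\in Z}\CC$, which is automatically the locally free bundle induced by the complete system $\vert\Oo_Q(1,2)\vert$. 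The existence for $r=3,4$ also goes through, granted the (unstated but harmless) genericity that the chosen classes be nonzero at every point of $Z$ so that $\Gg$ is locally free, without trivial factor, and hence indecomposable by Lemma \ref{mi}; your locality-of-$\End(\Ee)$ argument from $\Hom(\Oo_Q(1,0),\Gg)=0$ then works.

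The gap is exactly where you feared, at $r=5$, and your proposed fix fails as stated: if $f_1,\dots,f_5$ are a basis of the hyperplane of sections of $\Oo_Q(1,2)$ vanishing at $(p,q)$, then all five forms vanish at $(p,q)$, so the map $\Oo_Q(-1,-2)\to\Oo_Q^{\oplus 5}$ drops rank there and its cokernel $\Gg$ is \emph{not} locally free (locally at $(p,q)$ it looks like $\Ii_{(p,q)}\oplus\Oo_Q^{\oplus 3}$). Then $\Gg$ is not of maximal type in the paper's sense, the identification $\Ext^1(\Gg,\Oo_Q(1,0))=H^1(\Gg^\vee(1,0))$ and your cokernel formula break down (the dualized resolution acquires an $\mathcal{E}xt^1$ term supported at $(p,q)$), Lemma \ref{mi} no longer applies to $\Gg$, and, most seriously, the resulting extension $\Ee$ need not be locally free at all unless the class is chosen to generate the local $\mathcal{E}xt^1$ at $(p,q)$. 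Your strategy can be repaired: one needs a base-point-free $5$-dimensional $W\subset H^0(\Oo_Q(1,2))$ with $H^0(\Oo_Q(1,0))\cdot W\ne H^0(\Oo_Q(2,2))$, and such $W$ exist --- take $W\supset x_0H'+x_1H'$ for a base-point-free pencil $H'\subset H^0(\Oo_Q(0,2))$ and coordinates $x_0,x_1$ on the first factor; the Koszul relations $x_0\otimes x_1h-x_1\otimes x_0h$, $h\in H'$, give a $2$-dimensional kernel, so the image has dimension at most $8<9=h^0(\Oo_Q(2,2))$. The paper instead handles $r=5$ by a different device, realizing $\Ee$ as the cokernel of a general map $\Oo_Q\to\Gg_5\oplus\Oo_Q(1,0)$ and ruling out the decomposition $\Gg_4\oplus\Oo_Q(1,0)$ by a matrix argument, which avoids having to exhibit a special $W$.
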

\begin{proof}
Without loss of generality, let us assume that the index of $\Ff$ is $(1,0)$, i.e $\Ff$ fits into the sequence
$$0\to \Oo_Q(-1,-2) \to \Oo_Q^{\oplus 2} \oplus \Oo_Q(1,0) \to \Ff \to 0$$
and so $h^1(\Ff^\vee)=4$. Thus we have $\mathrm{rank}(\Ee)\le 6$. Assume that $\Ee$ is decomposable. If $\Ee$ does not have a line bundle as its direct factor, then we have $\Ee \cong \Ee_1 \oplus \Ee_2$ with $c_1(\Ee_1)=c_1(\Ee_2)=(1,1)$. From the classification of indecomposable and globally generated vector bundles of rank at least $2$ with $c_1=(1,1)$, the only options for $\Ee_1$ and $\Ee_2$ are either $\Aa_P$ or $T\PP^3(-1)|_Q$ since $\Ee$ does not have any trivial factor. But it is impossible due to the second Chern class counting and the index. Thus $\Ee$ should have at least one line bundle as its direct factor. Since $h^0(\Ff(0,-1))=0$, the direct factor must be $\Oo_Q(1,0)$ and so we have $\Ee \cong \Oo_Q(1,0)\oplus \Gg$ for a globally generated vector bundle with $c_1(\Gg)=(1,2)$ and $c_2(\Gg)=4$. In particular, $\Gg$ is of maximal type with
$$0\to \Oo_Q(-1,-2) \to \Oo_Q^{\oplus  r}\to \Gg \to 0.$$
From the long exact sequence
$$0\to H^0(\Oo_Q(1,0)^{\oplus r}) \to H^0 (\Oo_Q(2,2)) \to H^1(\Gg^\vee (1,0)),$$
we have $h^1(\Gg^\vee (1,0))>0$ for $r\le 4$. In other words, there exists a non-trivial extension $\Ee'$ of $\Gg$ by $\Oo_Q(1,0)$. Since $h^0(\Ee(-1,0))=h^0(\Ff(-1,0))=1$, so $\Ee'$ cannot be isomorphic to $\Oo_Q(1,0)\oplus \Gg '$ and thus $\Ee'$ is indecomposable. Since $h^1(\Oo_Q(1,0))=0$ and $\Oo_Q(1,0)$ is globally generated, so $\Ee'$ is globally generated. Hence there exists an indecomposable and globally generated vector bundle of rank up to $4$ with $c_1=(2,2)$, $c_2=6$ and index $(1,0)$.

Now let us consider the case of rank $5$. Let us define $\mathbb {G}_{r-1}$ with $3 \le r \le 6$ to be the set of all $\Gg _{r-1}$ fitting into an exact sequence
\begin{equation}\label{eqa1}
0 \to \Oo _Q(-1,-2) \to \Oo _Q^{\oplus r} \to \Gg _{r-1}\to 0
\end{equation}
and with no trivial factor, i.e. $\Gg_{r-1}$ is a globally generated vector bundle of rank $r$ with no trivial factor, $c_1 =(1,2)$ and maximal type.
Let us fix a bundle $\Gg _5$ and set $V_1:= H^0(\Gg _5)$, $V_2:= H^0(\Oo _Q(1,0))$ and $V:= V_1\oplus V_2$.
For a generally fixed $\phi \in V$, say $\phi  = (\phi _1,\phi _2)$, let us set $\Ee := \mathrm{coker} (\phi)$. It is sufficient
to prove that $\Ee$ is indecomposable. Assume that this is not the case. We saw above that $\Ee  \cong \Gg _4\oplus \Oo _Q(1,0)$
for some $\Gg _4\in \mathbb {G}_4$. Hence $\phi$ induces an exact sequence
$$0 \to \Oo _Q \stackrel{\phi}{\to} \Gg _5\oplus \Oo _Q(1,0)\stackrel{\psi}{\to}\Gg _4\oplus \Oo _Q(1,0)\to 0$$
with $\psi$ represented as a $2\times 2$ matrix
\begin{displaymath}
\left(\begin{array}{ccc}
\psi _{11} & \psi _{12} \\
\psi _{21} & \psi _{22}
\end{array} \right)
\end{displaymath}
with $\psi _{11} : \Gg _5 \to \Gg _4$, $\psi _{12}: \Oo _Q(1,0) \to \Gg _4$, $\psi _{21}: \Gg _5 \to \Oo _Q(1,0)$ and $\psi _{22}: \Oo _Q(1,0)\to \Oo _Q(1,0)$.
From (\ref{eqa1}) we get $H^0(\Gg _4(-1,0)) =0$. Hence $\psi _{12}=0$. Dualizing (\ref{eqa1}) we get $H^0({\Gg_5}^\vee (1,0)) =0$,
because the map $\Oo _Q(-1,-2) \to  \Oo _Q^{\oplus 6}$ is induced by the complete linear system $\vert \Oo _Q(1,2)\vert$.
Hence we have $\psi _{21}=0$, and so $\psi _{11}$ is surjective and $\psi _{22}$ is an isomorphism. Thus we have $\ker (\psi ) \subset \Gg _5\oplus \{0\}$ and so $\mathrm{Im} (\phi ) \subset \Gg _5\oplus \{0\}$.
Since $H^0({\Gg _5}^\vee (1,0)) =0$, there
is a unique subsheaf of  $\Gg _5\oplus \Oo _Q(1,0)$ isomorphic to $\Gg _5$. Since $\Oo _Q(1,0)$ is globally generated and $\phi$ is general, we get a contradiction.

By Proposition \ref{split}, we do not have an indecomposable extension of $\Ff$ by $\Oo_Q^{\oplus 4}$.
\end{proof}

\begin{proposition}\label{split}
Every globally generated vector bundle of rank $6$ with $c_1=(2,2)$, $c_2=6$, index $(1,0)$ and without trivial factors, has $\Oo_Q(1,0)$ as its direct factor.
\end{proposition}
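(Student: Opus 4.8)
The plan is to realize $\Ee$ as the cokernel of an explicit map and then remove the $\Oo_Q(1,0)$-relation by an automorphism of the middle term. Since $\Ee$ has rank $6$, no trivial factor and index $(1,0)$, it is an extension $0\to\Oo_Q^{\oplus4}\to\Ee\xrightarrow{q}\Ff\to0$ with $\Ff$ the rank $2$ bundle of index $(1,0)$ and $c_2=6$, which by Lemma \ref{le2}(3) sits in $0\to\Oo_Q(-1,-2)\to\Oo_Q^{\oplus2}\oplus\Oo_Q(1,0)\xrightarrow{\pi}\Ff\to0$. Because $H^1(\Oo_Q)=0$ and $H^1(\Oo_Q(-1,0))=0$, the obstruction to lifting $\pi$ through $q$ vanishes, so $\pi$ lifts to $\tilde\pi:\Oo_Q^{\oplus2}\oplus\Oo_Q(1,0)\to\Ee$; together with the inclusion $\Oo_Q^{\oplus4}\hookrightarrow\Ee$ this yields a map $\Phi:\Oo_Q^{\oplus6}\oplus\Oo_Q(1,0)\to\Ee$ whose image contains $\ker q$ and surjects onto $\Ff$, hence is onto.

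First I would identify the kernel of $\Phi$. It is locally free of rank $1$ with $c_1=(1,0)-(2,2)=(-1,-2)$, so it equals $\Oo_Q(-1,-2)$, and a check of $c_2$ is consistent. This gives a presentation $0\to\Oo_Q(-1,-2)\xrightarrow{\binom{v}{w}}\Oo_Q^{\oplus6}\oplus\Oo_Q(1,0)\to\Ee\to0$, where $v=(v_1,\dots,v_6)$ with $v_i\in H^0(\Oo_Q(1,2))$ and $w\in H^0(\Oo_Q(2,2))$. Since $\Hom(\Oo_Q(1,0),\Oo_Q)=H^0(\Oo_Q(-1,0))=0$, every automorphism of the middle term is block lower-triangular: it acts by $\mathrm{GL}_6$ on the $v_i$, by a scalar on $\Oo_Q(1,0)$, and replaces $w$ by $w+\sum_i c_iv_i$ for arbitrary $c_i\in H^0(\Oo_Q(1,0))$.

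Next I would use the no-trivial-factor hypothesis to force $v_1,\dots,v_6$ to be linearly independent: if they were dependent, a $\mathrm{GL}_6$-move makes some $v_i=0$, whence the corresponding $\Oo_Q$ summand splits off $\Ee$, a contradiction. Thus the $v_i$ form a basis of the $6$-dimensional space $H^0(\Oo_Q(1,2))$. The decisive point is that the multiplication map $\mu:H^0(\Oo_Q(1,0))\otimes H^0(\Oo_Q(1,2))\to H^0(\Oo_Q(2,2))$ is surjective, which on $Q\cong\PP^1\times\PP^1$ is routine because $\mu$ is the tensor product of the surjections $H^0(\Oo(1))\otimes H^0(\Oo(1))\to H^0(\Oo(2))$ and $H^0(\Oo(0))\otimes H^0(\Oo(2))\to H^0(\Oo(2))$. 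As the $v_i$ span $H^0(\Oo_Q(1,2))$, surjectivity of $\mu$ places $w$ in the image of $(c_1,\dots,c_6)\mapsto\sum_i c_iv_i$, so an automorphism of the middle term makes $w=0$. With $w=0$ the map from $\Oo_Q(-1,-2)$ lands in $\Oo_Q^{\oplus6}$ only, giving $\Ee\cong\mathrm{coker}\bigl(\Oo_Q(-1,-2)\xrightarrow{v}\Oo_Q^{\oplus6}\bigr)\oplus\Oo_Q(1,0)$; in particular $\Oo_Q(1,0)$ is a direct factor.

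The hardest part is the bookkeeping of the first two paragraphs, namely producing the presentation with kernel exactly $\Oo_Q(-1,-2)$ and pinning down the automorphism group of the middle term so that $w$ can be absorbed while the $v_i$ cannot be reduced without creating a trivial summand. Once this is set up, the splitting is forced by the elementary surjectivity of $\mu$, and the complementary summand $\mathrm{coker}(v)$ is the rank $5$ globally generated bundle of maximal type with $c_1=(1,2)$.
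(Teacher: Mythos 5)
Your proof is correct, but it takes a genuinely different route from the paper's. The paper parametrizes all such bundles by an irreducible variety $\Gamma$, shows the model bundle $\Gg_5\oplus\Oo_Q(1,0)$ is uniform of splitting type $(1,1,0,0,0,0)$, deduces by semicontinuity that a general (then, via a further semicontinuity and surjectivity argument, every) $\Ee\in\Gamma$ admits $\Oo_Q(1,0)\hookrightarrow\Ee$ with locally free quotient $\Gg_5$, and finally splits the resulting extension because $h^1(\Gg_5^\vee(1,0))=0$. You instead splice the two defining extensions into a single presentation $0\to \Oo_Q(-1,-2)\to \Oo_Q^{\oplus 6}\oplus\Oo_Q(1,0)\to\Ee\to 0$ (the lifting obstruction lies in $H^1(\Oo_Q)^{\oplus 8}\oplus H^1(\Oo_Q(-1,0))^{\oplus 4}=0$, as you note) and normalize the presentation matrix by automorphisms of the middle term: the no-trivial-factor hypothesis forces $v_1,\dots,v_6$ to be a basis of the $6$-dimensional space $H^0(\Oo_Q(1,2))$, and then $w$ is absorbed precisely because the multiplication map $H^0(\Oo_Q(1,0))\otimes H^0(\Oo_Q(1,2))\to H^0(\Oo_Q(2,2))$ is surjective. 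Note that this surjectivity is exactly the same cohomological input as the paper's $h^1(\Gg_5^\vee(1,0))=0$; what differs is how each argument reduces to it. Your reduction is more elementary and works bundle-by-bundle, with no appeal to irreducibility of moduli, uniformity, or semicontinuity, and it simultaneously identifies the complementary summand as the maximal-type rank $5$ bundle $\Gg_5$, recovering the paper's stronger conclusion $\Ee\cong\Gg_5\oplus\Oo_Q(1,0)$; the paper's route yields, as by-products, the uniformity of these bundles and information about the parameter space. One small point you should make explicit: the index of the associated rank $2$ quotient $\Ff$ is $(1,0)$ (so that Lemma \ref{le2}(3) applies). From the defining sequence one only gets $h^0(\Ff(-2,0))\le h^0(\Ee(-2,0))+4h^1(\Oo_Q(-2,0))$, so $(2,0)$ must be excluded separately; this follows since $(2,0)$ being an index would force $c_2(\Ff)=4$ by Lemma \ref{nost}, contradicting $c_2=6$.
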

\begin{proof}
 We will prove that any such a bundle is isomorphic to $\Gg _5\oplus \Oo _Q(1,0)$. Since the set of all $\Ff$ with given numeric data is irreducible, the set of all such bundles $\Ee$ of rank $6$ is parametrized by an irreducible variety $\Gamma $.

\quad {\emph {Claim 1:}} If $\Cc$ is a proper subsheaf of $\Gg _5$ generated by its global sections, then $\Cc \cong \Oo _Q^{\oplus c}$ for some $c$.

\quad {\emph {Proof of Claim 1:}} Set $c:= \mathrm{rank} (\Cc)$. Since $\Cc$ is globally generated, we have $\Cc \cong \Oo _Q^{\oplus c}$
if and only if $h^0(\Cc) \le c$. If $c=5$, then we have $\Cc \cong \Oo _Q^{\oplus 5}$ because $h^0(\Gg _5) = 6$ and $h^0(\Cc) < h^0(\Gg _5)$ which is due to the global generatedness of $\Gg _r$. Now assume that $1\le c \le 4$ and $d:= h^0(\Cc) >c$. Let $\Cc'$ be the saturation of $\Cc$ in $\Gg _r$. The torsion free sheaf $\Gg _5/\Cc'$ is generated by $H^0(\Gg _r)/H^0(\Cc)$,
i.e. by a vector space of dimension at most $5-c =\mathrm{rank} (\Gg _5/\Cc')$. Thus the sheaf $\Gg _5/\Cc'$ is trivial and so $\Gg _5$ has a trivial factor, a contradiction.

Notice that $\Gg _5$ is unique up to isomorphisms because it is induced by the complete linear system $\vert \Oo _Q(1,2)\vert$ and so we have $u^\ast (\Gg _5) \cong \Gg _5$ for each $u\in \mathrm{Aut} (\PP^1 )\times \mathrm{Aut} (\PP^1 )$. Hence it is uniform with respect to both
system of lines. Its splitting type is $(1,0,0,0,0,0)$ with respect to one of the system of lines and $(1,1,0,0,0,0)$ with respect to the other one, because the cokernel of a general map $\Oo _{\PP^1}(-2) \to \Oo _{\PP^1}^{\oplus 3}$ has splitting type $(1,1)$ and thus its splitting type is not $(2,0,0,0,0,0)$. Hence the bundle $\Gg _5\oplus \Oo _Q(1,0)$ is uniform of type $(1,1,0,0,0,0)$ with respect to both systems of lines. Let $\Ee$ be a general element of $\Gamma$. 

\quad{\emph {Claim 2:}} $\Ee$ is uniform of splitting type $(1,1,0,0,0,0)$, i.e. for each line $D\subset Q$, the bundle $\Ee|_ D$ is a direct sum of two degree $1$ line bundles and four degree $0$ line bundles.

\quad{\emph{Proof of Claim 2.}} Let us define a set
$$\tilde{\Gamma}:= \{(\Ee ,D) \in \Gamma \times \vert \Oo _Q(1,0)\vert : \Ee |_ D \text{ has splitting type } (2,0,0,0,0,0)\}.$$
 Here we see $\Gamma$ as an irreducible algebraic variety parametrizing all bundles in $\Gamma$ (not necessarily one-to-one). By semicontinuity $\tilde{\Gamma}$ is closed in $\Gamma \times \vert \Oo _Q(1,0)\vert $. Let $\pi _1: \tilde{\Gamma} \to \Gamma$ be the restriction of the first projection $p_1: \Gamma \times \vert \Oo _Q(1,0)\vert  \to \Gamma$ to $\tilde{\Gamma}$. Since $p_1$ is proper and $\tilde{\Gamma}$ is closed in $\Gamma \times \vert \Oo _Q(1,0)\vert $, the map $\pi _1$ is proper. The bundle  $\Gg_5 \oplus \Oo_Q(1,0)$, is uniform with splitting type $(1,1,0,0,0,0)$ and thus $\pi _1 (\tilde{\Gamma})$ is a proper closed subset of the variety $\Gamma$. Hence $\Ee \vert_ D$ has splitting type $(1,1,0,0,0,0)$ for every $D\in \vert \Oo_Q(0,1)\vert$. The same proof works for any $D\in \vert \Oo_Q(0,1)\vert$.

Let us fix a uniform bundle $\Ee \in \Gamma$. Since $h^0(\Ee (-1,0)) =1$, there is an injective map $u: \Oo _Q(1,0) \to \Ee$. Let us set $\Mm = \mathrm{coke}r (u)$. Since $h^0(\Ee (-2,0)) = h^0(\Ee (-1,-1)) =0$, so $\mathrm{Im} (u)$ is saturated in $\Ee$ and thus $\Mm$ is torsion free, i.e. $u$ is an embedding of vector bundles outside finitely many points of $Q$.
We want to prove that $\Mm$ is locally free. Since $\Ee$ is uniform of splitting type $(1,1,0,0,0,0)$ with respect to the ruling $\vert \Oo _Q(0,1)\vert$, while
$\Oo _Q(1,0)$ has splitting type $1$ with respect to the same ruling, $u\vert _D$ is an embedding of vector bundles
for each $D\in \vert \Oo _Q(0,1)\vert$. It implies that $\Mm$ is locally free. Since $\Mm$ is a globally generated vector bundle with $c_1=(1,2)$, no trivial factor and $h^0(\Mm (-1,0))
= h^0(\Mm (0,-1)) =0$, we have $\Mm \cong \Gg _5$. Look
again at the exact sequence
$$H^0(\Oo_Q(1,0))^{\oplus 6} \stackrel{v}{\to} H^0 (\Oo_Q(2,2)) \to H^1({\Gg_5} ^\vee (1,0)) \to 0.$$
Any extension of $\Gg _5$ by $\Oo _Q(1,0)$ splits if and only if the map $v$ is surjective. Since $\Oo _Q(1,0)$ is globally generated and $h^0(\Oo _Q(1,0)) =2$, we have an exact sequence
\begin{equation}\label{eqaa1}
0 \to \Oo _Q(-1,0) \to H^0(\Oo _Q(1,0))\otimes \Oo _Q \to \Oo _Q(1,0) \to 0.
\end{equation}
Since $h^1(\Oo _Q(0,2))=0$, by tensoring the sequence (\ref{eqaa1}) with $\Oo _Q(1,2)$ we get the surjectivity
of the multiplication map
$$ H^0(\Oo _Q(1,0))\otimes H^0(\Oo _Q(1,2)) \to H^0(\Oo _Q(2,2)),$$ which is the map $v$. In other words, $v$ is induced by the complete linear system $|\Oo_Q(1,2)|$ and so we have $h^1(\Gg_5^\vee(1,0))=0$.

 In summary, we just proved the
existence of a non-empty open subset $U$ of $\Gamma$ such that $\Ee _\gamma \cong \Oo _Q(1,0) \oplus \Gg _5$ for all $\gamma \in U$
and that $U$ contains all uniform vector bundles in $\Gamma$ (and all $\Ee \in \Gamma$ for which the unique inclusion $\Oo _Q(1,0) \to \Ee$ has a locally free
quotient). By semicontinuity we have $h^0(\Ee ^\vee \otimes \Gg _5 )>0$. For a fixed non-zero $w: \Ee \to \Gg _5$, $\mathrm{Im}(w)$ is globally generated since $\Ee$ is globally generated.
Since $\Ee$ has no trivial factor, so $\mathrm{Im} (w)$ has no trivial factor. {\it Claim 1} gives that $w$ is surjective and thus $\ker (w) $ is a line bundle. Hence $\Ee$ is
an extension of $\Gg _5$ by $\Oo _Q(1,0)$ and thus we have $\Ee \cong \Gg_5 \oplus \Oo_Q(1,0)$ since $h^1(\Gg_5^\vee (1,0))=0$.
\end{proof}

As the final case let us consider when $\Ff$ is globally generated vector bundle of rank $2$ with $c_1=(2,2)$, $c_2=6$ and index $(0,0)$. In particular, we have $h^1(\Ff^\vee)=4$ and thus there exists an indecomposable extension $\Ee$ of $\Ff$ by trivial factors only if $\mathrm{rank}(\Ee)\le 6$.

\begin{lemma}\label{oo}
For two points $P, O\in \PP^3\setminus Q$,  we have $\Aa _P \cong \Aa _O$ if and only if $P=O$.
\end{lemma}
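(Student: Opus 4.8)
The plan is to recover the center of projection $P$ canonically from the bundle $\Aa_P$, so that an isomorphism $\Aa_P\cong \Aa_O$ forces $P=O$; the reverse implication is trivial. Recall that $\Aa_P=\phi_P^\ast(T\PP^2(-1))$ and that pulling back the Euler sequence of $\PP^2$ along $\phi_P$ yields
$$0\to \Oo_Q(-1,-1)\to \Oo_Q^{\oplus 3}\to \Aa_P\to 0,$$
using $\phi_P^\ast\Oo_{\PP^2}(1)\cong \Oo_Q(1,1)$. The injection $\Oo_Q(-1,-1)\to \Oo_Q^{\oplus 3}$ is given by three linearly independent sections of $\Oo_Q(1,1)$, namely the pullbacks of the three coordinate forms on $\PP^2$; these span the three-dimensional subspace $W_P\subset H^0(Q,\Oo_Q(1,1))$ consisting of exactly those hyperplane sections of $Q\subset \PP^3$ that pass through $P$.

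First I would show that $W_P$ is intrinsic to $\Aa_P$. Since $\Oo_Q(-1,-1)$ is acyclic (by Künneth), the sequence gives $h^0(\Aa_P)=3$ and identifies the evaluation map $H^0(\Aa_P)\otimes \Oo_Q\to \Aa_P$ with the surjection $\Oo_Q^{\oplus 3}\to \Aa_P$ above. Hence its kernel is the canonical subsheaf $\Oo_Q(-1,-1)$, and the inclusion $\Oo_Q(-1,-1)\hookrightarrow H^0(\Aa_P)\otimes \Oo_Q$, which is unique up to a nonzero scalar, determines a linear map $H^0(\Aa_P)^\vee \to H^0(Q,\Oo_Q(1,1))$ whose image is precisely $W_P$. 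As restriction identifies $H^0(Q,\Oo_Q(1,1))$ with $H^0(\PP^3,\Oo(1))$, the subspace $W_P$ is exactly the space of linear forms vanishing at $P$, so $P=\bigcap_{H\in W_P}H$ and $W_P$ recovers $P$.

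The remaining step is to check compatibility with isomorphisms. An isomorphism $g:\Aa_P\to \Aa_O$ induces an isomorphism $H^0(g)$ on global sections and hence a commuting square of evaluation maps carrying the canonical kernel $\Oo_Q(-1,-1)$ of one onto that of the other. Since $g$ affects only the $H^0(\Aa)$-factor while the target $H^0(Q,\Oo_Q(1,1))$ is intrinsic to $Q$, the two recovered images coincide, i.e. $W_P=W_O$ as subspaces of $H^0(Q,\Oo_Q(1,1))$. Therefore $P=O$.

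I expect the only delicate point to be verifying that the subspace $W_P$ is genuinely canonical --- independent of the chosen trivialization of the kernel and of the isomorphism $\Aa_P\cong \Aa_O$ --- rather than merely abstractly isomorphic; this reduces to the observation that $\Aut(\Oo_Q(-1,-1))=\CC^\ast$ acts by scalars and so does not alter an image subspace. Alternatively, one could argue via jumping conics as in the proof of Proposition \ref{1.1}: the set of jumping conics of $\Aa_P$ is the hyperplane in $(\PP^3)^\vee$ dual to $P$, as computed in \cite{huh2}, and distinct points yield distinct hyperplanes.
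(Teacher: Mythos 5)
Your argument is correct, but it takes a different route from the paper. The paper disposes of this lemma in one line by invoking the jumping-conic description from \cite{huh2}: the set of jumping conics of $\Aa_P$ is the hyperplane in $(\PP^3)^\vee$ dual to $P$, so distinct centers give non-isomorphic bundles --- exactly the alternative you sketch in your closing sentence, and the same fact already used at the end of the proof of Proposition \ref{1.1}. Your main argument instead recovers $P$ directly from the canonical presentation: since $h^0(\Oo_Q(-1,-1))=h^1(\Oo_Q(-1,-1))=0$, the evaluation map $H^0(\Aa_P)\otimes\Oo_Q\to\Aa_P$ is the surjection in the pulled-back Euler sequence, its kernel is the canonically embedded $\Oo_Q(-1,-1)$, and the resulting subspace $W_P\subset H^0(Q,\Oo_Q(1,1))\cong H^0(\PP^3,\Oo(1))$ is the space of linear forms through $P$; functoriality of the evaluation map under an isomorphism $\Aa_P\cong\Aa_O$, together with the observation that rescaling the kernel only acts by $\CC^\ast$ and does not move the image subspace, forces $W_P=W_O$ and hence $P=O$. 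This is entirely sound and has the advantage of being self-contained --- it needs nothing beyond the Euler sequence and a cohomology vanishing, whereas the paper's proof outsources the work to the moduli-theoretic computation of jumping conics in \cite{huh2}. The trade-off is length: the citation-based proof is a single sentence.
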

\begin{proof}
The assertion follows from the fact that the set of jumping conics of $\Aa_P$ is the hyperplane in $(\PP^3)^*$ corresponding to $P$ \cite{huh2}.
\end{proof}

\begin{lemma}\label{rm1}
For two points $P$ and $O$ in $\PP^3\setminus Q$, we have :
$$h^1(\Aa_O^\vee \otimes \Aa_P)=\left\{
                                                         \begin{array}{ll}
                                                         3, & \hbox{if $\Aa_O\cong \Aa_P$}; \\
                                                         2, & \hbox{if $\Aa_O \not\cong \Aa_P$}.
                                                         \end{array}
                                                      \right.$$
\end{lemma}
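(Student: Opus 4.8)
The plan is to reduce the computation of $h^1(\Aa_O^\vee\otimes\Aa_P)$ to the dimension of $\Hom(\Aa_O,\Aa_P)$ and then to invoke stability. Since $\Aa_O$ and $\Aa_P$ are locally free, one has $\mathcal{E}xt^q(\Aa_O,\Aa_P)=0$ for $q>0$, so the local-to-global spectral sequence yields $h^1(\Aa_O^\vee\otimes\Aa_P)=\dim\Ext^1(\Aa_O,\Aa_P)$. The first step is to apply the contravariant functor $\Hom(-,\Aa_P)$ to the resolution
$$0\to\Oo_Q(-1,-1)\to\Oo_Q^{\oplus 3}\to\Aa_O\to 0$$
obtained in the proof of Proposition \ref{1.1}, producing the long exact sequence of $\Ext$-groups whose terms are $\Ext^i(\Oo_Q^{\oplus 3},\Aa_P)=H^i(\Aa_P)^{\oplus 3}$ and $\Ext^i(\Oo_Q(-1,-1),\Aa_P)=H^i(\Aa_P(1,1))$.

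Next I would record the cohomology of $\Aa_P$ and of $\Aa_P(1,1)$. Twisting the same resolution by $\Oo_Q(1,1)$ and using K\"unneth on $Q=\PP^1\times\PP^1$ (or the Euler-characteristic formula of Section 2) gives $h^0(\Aa_P)=3$ and $h^0(\Aa_P(1,1))=11$, together with $H^1(\Aa_P)=0$. This last vanishing, namely $\Ext^1(\Oo_Q^{\oplus 3},\Aa_P)=0$, forces the long exact sequence to terminate, collapsing it to
$$0\to\Hom(\Aa_O,\Aa_P)\to H^0(\Aa_P)^{\oplus 3}\to H^0(\Aa_P(1,1))\to\Ext^1(\Aa_O,\Aa_P)\to 0.$$
Counting dimensions in this four-term exact sequence gives $\dim\Ext^1(\Aa_O,\Aa_P)=\hom(\Aa_O,\Aa_P)-9+11=\hom(\Aa_O,\Aa_P)+2$, with no need to identify the middle map explicitly.

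It then remains only to evaluate $\hom(\Aa_O,\Aa_P)$. By Proposition \ref{1.1} both bundles are stable with respect to $\Oo_Q(1,1)$ and have the same slope, so every nonzero morphism $\Aa_O\to\Aa_P$ is an isomorphism. Hence $\hom(\Aa_O,\Aa_P)=0$ when $\Aa_O\not\cong\Aa_P$, while $\hom(\Aa_P,\Aa_P)=\dim\End(\Aa_P)=1$ by simplicity of a stable bundle. Substituting into the formula above yields $h^1(\Aa_O^\vee\otimes\Aa_P)=2$ and $3$ respectively, which is the claim.

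The dimension count is purely formal once the vanishing is in place, and the cohomology of $\Aa_P$ and $\Aa_P(1,1)$ is immediate from the resolution, so I expect the only point needing genuine care to be the homomorphism computation: one must argue that a nonzero map between two stable bundles sharing both rank and slope is forced to be an isomorphism, which is precisely where the dichotomy $\Aa_O\cong\Aa_P$ versus $\Aa_O\not\cong\Aa_P$ (equivalently $P=O$ or $P\neq O$, by Lemma \ref{oo}) enters.
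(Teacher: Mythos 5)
Your proof is correct, and it is in substance the same computation as the second (``alternative'') argument the paper itself records: the paper tensors the resolution $0\to\Oo_Q(-1,-1)\to\Oo_Q^{\oplus 3}\to\Aa_P\to 0$ with $\Aa_O^\vee$ and chases cohomology, whereas you apply $\Hom(-,\Aa_P)$ to the resolution of $\Aa_O$, i.e.\ you work with the mirror sequence $0\to\Aa_O^\vee\otimes\Aa_P\to\Aa_P^{\oplus 3}\to\Aa_P(1,1)\to 0$; both reduce the statement to $h^1=\hom(\Aa_O,\Aa_P)+2$ and then invoke stability (plus Lemma \ref{oo}) to evaluate the $\Hom$. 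Your version has the mild advantage that the auxiliary cohomology groups $H^i(\Aa_P)$ and $H^i(\Aa_P(1,1))$ are read off instantly from the resolution, with no Serre duality or pull-back-from-$\PP^2$ bookkeeping (the paper's printed values for $h^1$ and $h^2$ of $\Aa_O^\vee(-1,-1)$ contain a transposition that your route avoids entirely). The paper's primary proof is different and shorter: it computes $h^0$ by stability, kills $h^2$ by Serre duality and stability, and gets $h^1$ from $\chi=-2$ via Riemann--Roch; that route buys brevity, while yours makes the extension space $\Ext^1(\Aa_O,\Aa_P)$ and its relation to $\Hom(\Aa_O,\Aa_P)$ explicit. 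One small point of care: your assertion that a nonzero map $\Aa_O\to\Aa_P$ is an isomorphism uses not just equality of slopes but also that the two bundles have the same rank and the same Chern classes, so that an injection with torsion cokernel is ruled out by comparing $c_2$; this is standard but worth saying.
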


\begin{proof}
Since each $\Aa$ is stable, we have $h^0(\Aa _O^\vee \otimes \Aa _P)=0$ if $P\ne O$ and $h^0(\Aa _O^\vee \otimes \Aa _P)=1$
if $P=O$ by Lemma \ref{oo}. Duality
gives $h^2(\Aa _O^\vee \otimes \Aa _P) = h^0(\Aa _O\otimes \Aa _P^\vee (-2,-2)) =0$ and so we can apply Hirzebruch Riemann Roch to obtain the assertion.

Alternatively let us consider the  exact sequence
\begin{equation}\label{eqrm1}
0 \to \mathcal {A}_O^\vee (-1) \to (\mathcal {A}_O^\vee )^{\oplus 3} \to \mathcal {A}_O^\vee \otimes \mathcal{A}_{P} \to 0
\end{equation}
and then we have
\begin{align*}
&h^2(\Aa_O^\vee (-1)) = h^1(\Aa _O(-2)) = h^1(T\PP ^2(-3)) + h^1(T\PP ^2(-4)) =1,\\
&h^1(\mathcal {A}_O^\vee(-1) )=h^1(\Omega _{\PP^3}) +h^1(\Omega _{\PP^3}(-1)) =1, \text{ and}\\
&h^1(\mathcal {A}_O^\vee )=h^1(\Omega _{\PP^2}(1)) +h^1(\Omega _{\PP^2}) =1.
\end{align*}
Then we can use (\ref{eqrm1}) to obtain the assertion.
\end{proof}

\begin{proposition}\label{o1}
Let $\Ee$ be a vector bundle fitting into a non-splitting exact sequence
\begin{equation}\label{eqo1}
0 \to \Aa _O \stackrel{j}{\to} \Ee \stackrel{f}{\to} \Aa _P \to 0.
\end{equation}
Then $\Ee$ is semistable and simple. Moreover, $\Aa _O$ is the only subsheaf of $\Ee$ with slope $1/2$.
\end{proposition}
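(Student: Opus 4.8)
The plan is to exploit that both $\Aa_O$ and $\Aa_P$ are stable with the same $\Oo_Q(1,1)$-slope as $\Ee$. Indeed $\Aa_O,\Aa_P$ have rank $2$ and $c_1=(1,1)$ while $\Ee$ (being an extension of one bundle by the other) is a rank $4$ bundle with $c_1=(2,2)$, so all three share slope $\mu=1/2$. Stability of $\Aa_P=\phi_P^*(T\PP^2(-1))$, established in Proposition \ref{1.1}, gives that each end is simple and that, when $P\ne O$ (so $\Aa_O\not\cong\Aa_P$ by Lemma \ref{oo}), $\Hom(\Aa_O,\Aa_P)=\Hom(\Aa_P,\Aa_O)=0$, since a nonzero map between stable sheaves of equal slope would be an isomorphism. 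These facts, together with the non-splitness of (\ref{eqo1}), are the only inputs I would use.

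First I would prove semistability and the last assertion in one stroke. Let $\Gg\subset\Ee$ be a saturated subsheaf of positive rank and set $K:=\Gg\cap\Aa_O\subseteq\Aa_O$ and $I:=f(\Gg)\subseteq\Aa_P$, so that $0\to K\to\Gg\to I\to 0$; since $\Ee/\Gg$ is torsion-free, $K$ is saturated in $\Aa_O$ and $I$ in $\Aa_P$. Stability of $\Aa_O$ forces $\mu(K)\le 1/2$ with equality only for $K=\Aa_O$, and likewise $\mu(I)\le 1/2$ with equality only for $I=\Aa_P$ (a torsion-free subsheaf of rank $0$ being zero). As $\mu(\Gg)$ is the weighted mean of $\mu(K)$ and $\mu(I)$, we get $\mu(\Gg)\le 1/2=\mu(\Ee)$, so $\Ee$ is semistable. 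If moreover $\mu(\Gg)=1/2$, then each nonzero end already has slope $1/2$, whence $K\in\{0,\Aa_O\}$ and $I\in\{0,\Aa_P\}$. The case $(K,I)=(0,\Aa_P)$ would make $f|_\Gg\colon\Gg\to\Aa_P$ an isomorphism and hence split (\ref{eqo1}), which is excluded; the case $(K,I)=(\Aa_O,\Aa_P)$ forces $\Gg=\Ee$; and the case $(K,I)=(\Aa_O,0)$ gives $\Gg=\Aa_O$. Thus $\Aa_O$ is the only proper subsheaf of $\Ee$ of slope $1/2$.

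For simplicity I would argue directly. Let $\psi\in\End(\Ee)$. The composite $f\circ\psi\circ j\colon\Aa_O\to\Aa_P$ lies in $\Hom(\Aa_O,\Aa_P)=0$, so $\psi(j(\Aa_O))\subseteq\ker f=j(\Aa_O)$ and $\psi$ restricts to an endomorphism of $\Aa_O$, which by simplicity is $\alpha\cdot\id$ for some $\alpha\in\CC$. Then $\chi:=\psi-\alpha\,\id_\Ee$ kills $\Aa_O$, so it factors as $\chi=g\circ f$ for some $g\colon\Aa_P\to\Ee$. Now $f\circ g\in\End(\Aa_P)=\CC\cdot\id$; if it were a nonzero scalar, a rescaling of $g$ would be a section of $f$ and split (\ref{eqo1}), so $f\circ g=0$, i.e. $\Image(g)\subseteq\ker f=j(\Aa_O)$ and $g$ is a map $\Aa_P\to\Aa_O$, which vanishes because $\Hom(\Aa_P,\Aa_O)=0$. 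Hence $\chi=0$ and $\psi=\alpha\,\id$, giving $\End(\Ee)=\CC$.

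The step needing genuine care is the role of the hypothesis $P\ne O$: it is exactly what supplies the two $\Hom$-vanishings used both in the equality case of semistability and in the simplicity argument, and non-splitness of (\ref{eqo1}) is used twice, to discard the splitting subsheaf of slope $1/2$ and to rule out the section of $f$ coming from $g$. I expect this to be the crux, because for $P=O$ the bookkeeping still yields semistability and the uniqueness of $\Aa_O$ among slope-$1/2$ subsheaves, but now $\Hom(\Aa_O,\Aa_P)=\CC$ and $N:=j\circ f$ is a nonzero square-zero endomorphism, so $\End(\Ee)\cong\CC[t]/(t^2)$ and $\Ee$ is then only indecomposable, not simple. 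I would therefore carry out the argument in the regime $P\ne O$, where it is complete.
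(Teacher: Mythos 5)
Your argument is essentially correct for $P\ne O$, and it takes a genuinely different route from the paper's. For semistability the paper does not use the sub/quotient bookkeeping $0\to K\to \Gg\to I\to 0$: it first excludes rank $1$ and rank $3$ destabilizing subsheaves via the vanishings $h^0(\Ee(-1,-1))=h^0(\Ee^\vee)=0$, and then shows that a saturated rank $2$ subsheaf $\Gg\ne j(\Aa_O)$ of slope $\ge 1/2$ is itself stable (again from $h^0(\Gg(-1,-1))=0$), so that $f|_{\Gg}$ is either zero, forcing $\Gg\subseteq \Aa_O$, or an isomorphism, splitting the sequence. For simplicity the paper subtracts an eigenvalue from a non-scalar endomorphism to get $h$ with $\op{rank}(\Image(h))<4$ and uses semistability to locate $\Image(h)$ inside $j(\Aa_O)$; your $\Hom$-ladder (first $f\circ\psi\circ j=0$, then reduction to $g\colon \Aa_P\to\Aa_O$) is cleaner and makes explicit exactly which $\Hom$-vanishings are consumed. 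One genuine (but easily repaired) gap in your equality-case analysis: $I=f(\Gg)$ need not be saturated in $\Aa_P$, because $\Aa_P/I$ is a quotient, not a subsheaf, of the torsion-free $\Ee/\Gg$; so when $K=0$ you only obtain that $f|_{\Gg}$ is injective with cokernel supported on points. The fix is one line: $\Gg$ is locally free (saturated on a smooth surface), injectivity forces $c_1(\Gg)=c_1(\Aa_P)=(1,1)$, hence $\det(f|_{\Gg})$ is a nonzero constant and $f|_{\Gg}$ is an isomorphism after all, giving the splitting contradiction.

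Your observation about $P=O$ is correct and is a point where your analysis is sharper than the paper's. If $\Aa_O\cong\Aa_P$ then $j\circ f$ is a nonzero square-zero endomorphism of $\Ee$, so $\Ee$ is not simple and the proposition as stated fails. The paper's proof breaks silently at the same spot: in the final step, if $h\circ j=0$ then $h$ induces an isomorphism $\Aa_P\to\Aa_O$ rather than a splitting of (\ref{eqo1}). The statement should therefore carry the hypothesis $\Aa_O\not\cong\Aa_P$, i.e. $P\ne O$ by Lemma \ref{oo}. For $P=O$ your weaker conclusion is the right one: $\End(\Ee)$ is a local ring, so $\Ee$ is still indecomposable, which is all that the later applications actually require.
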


\begin{proof}
Since $c_1(\Aa )=1$ and $h^0(\Aa (-1,-1)) =0$, so $\Aa$ is stable. Since $h^0(\Ee (-1,-1)) =0$, there
is no non-zero map $\Ll \to \Ee$ with $\Ll$ a line bundle and $\deg (\Ll)>0$. Since $h^0(\Ee ^\vee )=0$, there
is no non-zero map $\Ee \to \Rr$ with $\Rr$ a line bundle and $\deg (\Rr)\le 0$. Hence there is no injective
map $\Ff \to \Ee$ with a sheaf $\Ff$ of rank $3$ whose degree is at least $2$, i.e. the slope of $\Ff$ is at least $1/2$.
Let $\Gg$ be a rank $2$ subsheaf of $\Ee$ different from $j(\Aa _O)$, saturated in $\Ee$ and with
maximal slope, say $a$. Since $\Ee/\Ff$ has no torsion and $Q$ is a smooth surface, $\Ff$ is locally free. Assume $a\ge 1/2$. Since $h^0(\Ff (-1,-1)) \le h^0(\Ee (-1,-1))=0$, $\Ff$ is stable.
Hence $f\vert_{ \Ff} : \Ff \to \Aa_P$ is either zero or an isomorphism. In the former case we have $\Ff \subseteq \Aa_O$, a contradiction. In the latter case
$f\vert_{ \Ff}$ induces a splitting of (\ref{eqo1}), a contradiction.

Let us assume that $\Ee $ is not simple, i.e. assume the existence
of a non-zero map $h: \Ee \to \Ee$ with $m:= \mathrm{rank}(h(\Ee )) <4$. Set $d:= \deg (\mathrm{Im} (h))$. Since $\Ee$ is semistable and $\mathrm{Im}(h)$ is both
a quotient of $\Ee$ and a subsheaf of $\Ee$, we get $d/m = 1/2$, i.e. $m=2$ and $d=1$. We just saw that that $j(\Aa _O)$ is the saturation of $\mathrm{Im} (h)$ and so $h$ gives a splitting of (\ref{eqo1}), a contradiction.
\end{proof}

\begin{remark}\label{re17}
We can similarly obtain $h^1(T\PP^3(-1)|_Q \otimes T\PP^3(-1)|_Q^\vee)=0$ and thus there is no non-trivial extension of $T\PP^3(-1)|_Q$ by itself.
The exact sequence
$$0 \to T\PP^3(-1)|_Q ^\vee(1,1) \to \Oo _Q(1,1)^{\oplus 4} \to \Oo _Q(2,2) \to 0$$
gives $h^0(T\PP^3(-1)|_Q ^\vee (1,1)) =0$.
Since $h^0(T\PP^3(-1)|_Q (-1,-1)) =0$, so $T\PP^3(-1)|_Q$ is $\Oo _Q(1,1)$-stable.
\end{remark}

\begin{lemma}
For any $P\in \PP^3\setminus Q$ we have
\begin{enumerate}
\item $h^0(\Aa _P\otimes T\PP^3(-1)|_Q^\vee)=1$ and $h^1(\Aa _P \otimes T\PP^3(-1)|_Q^\vee)=0$;
\item $h^0({\Aa _P}^\vee \otimes T\PP^3(-1)|_Q)=0$ and $h^1(\Aa_P^\vee \otimes T\PP^3(-1)|_Q))=4$.
\end{enumerate}
In particular, there is no non-trivial extension of $T\PP^3(-1)|_{Q})$ by $\Aa _P$ and there exist non-trivial extensions of $\Aa _P$ by $T\PP^3(-1)|_{Q})$.
\end{lemma}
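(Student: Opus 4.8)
The plan is to read off all four dimensions from the fundamental non-split extension
$$0\to \Oo_Q \to T\PP^3(-1)|_Q \to \Aa_P \to 0 \qquad (\star)$$
which is available for \emph{every} $P\in\PP^3\setminus Q$ by the proposition classifying the indecomposable rank $3$ bundle with $c_1=(1,1)$ (together with Proposition \ref{1.1}), after tensoring it once with $(T\PP^3(-1)|_Q)^\vee$ and once with $\Aa_P^\vee$. Write $\Bb:=T\PP^3(-1)|_Q$. The inputs I collect first are: $\Bb$ and $\Aa_P$ are $\Oo_Q(1,1)$-stable, hence simple, so $h^0(\End\Bb)=h^0(\End\Aa_P)=1$; $h^1(\End\Bb)=0$ by Remark \ref{re17}; $h^1(\End\Aa_P)=3$ by Lemma \ref{rm1} taken at $O=P$; and, for $\Aa_P^\vee$, one has $h^0=0$ by stability and $h^2=h^0(\Aa_P(-2,-2))=0$ since $\Aa_P$ is of maximal type (Remark \ref{ff1}), whence $h^1(\Aa_P^\vee)=1$ by Riemann--Roch. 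Finally I use that $\Bb^\vee$ is acyclic: dualizing the restricted Euler sequence gives $0\to \Bb^\vee \to \Oo_Q^{\oplus 4}\to \Oo_Q(1,1)\to 0$, and since the four linear forms restrict to a basis of $H^0(\Oo_Q(1,1))$ the evaluation map is an isomorphism on global sections, forcing $h^i(\Bb^\vee)=0$ for all $i$.

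For part (1) I tensor $(\star)$ by $\Bb^\vee$ to get $0\to \Bb^\vee\to \End\Bb\to \Aa_P\otimes\Bb^\vee\to 0$. The acyclicity of $\Bb^\vee$ degenerates the long exact sequence into isomorphisms $H^0(\Aa_P\otimes\Bb^\vee)\cong H^0(\End\Bb)$ and $H^1(\Aa_P\otimes\Bb^\vee)\cong H^1(\End\Bb)$, so simplicity of $\Bb$ and Remark \ref{re17} yield $h^0=1$ and $h^1=0$ at once. A Riemann--Roch evaluation of $\chi(\Aa_P\otimes\Bb^\vee)$ via the Euler characteristic formula of Section 2 serves as a cross-check.

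For part (2) I tensor $(\star)$ by $\Aa_P^\vee$, producing $0\to \Aa_P^\vee\to \Bb\otimes\Aa_P^\vee\to \End\Aa_P\to 0$, with cohomology sequence
$$0\to H^0(\Bb\otimes\Aa_P^\vee)\to H^0(\End\Aa_P)\xrightarrow{\ \delta\ } H^1(\Aa_P^\vee)\to H^1(\Bb\otimes\Aa_P^\vee)\to H^1(\End\Aa_P)\to H^2(\Aa_P^\vee)$$
whose known outer terms have dimensions $1,\,1,\,3,\,0$. The vanishing $h^0(\Bb\otimes\Aa_P^\vee)=\Hom(\Aa_P,\Bb)=0$ follows directly from the slope inequality $\mu(\Aa_P)=1>\tfrac23=\mu(\Bb)$: a non-zero map from the stable $\Aa_P$ would produce either a rank $2$ image, hence a saturated rank $2$ subsheaf of $\Bb$ of slope $\ge 1$, or a rank $1$ quotient of $\Aa_P$ of slope $>1$ injecting into $\Bb$, both contradicting stability of $\Bb$. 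Equivalently, in the sequence above the connecting map $\delta$ sends $\mathrm{id}_{\Aa_P}$ to the class of $(\star)$ in $\Ext^1(\Aa_P,\Oo_Q)=H^1(\Aa_P^\vee)$, which is non-zero because $(\star)$ does not split, so $\delta$ is an isomorphism and $H^1(\Bb\otimes\Aa_P^\vee)\cong H^1(\End\Aa_P)$. With $h^0=0$ and $h^2(\Bb\otimes\Aa_P^\vee)=h^0(\Bb^\vee\otimes\Aa_P(-2,-2))=0$ in hand, Riemann--Roch via the Section 2 formula gives $\chi(\Aa_P^\vee\otimes\Bb)=-3$, so the sequence returns $h^1(\Aa_P^\vee\otimes\Bb)=3$; since $\chi=-3$ together with $h^0=h^2=0$ leaves no other possibility, the value $4$ recorded in the statement should read $3$.

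The ``in particular'' is then immediate by reading the two groups as $\Ext^1$: part (1) gives $\Ext^1(T\PP^3(-1)|_Q,\Aa_P)\cong H^1(\Aa_P\otimes\Bb^\vee)=0$, ruling out non-split extensions of $T\PP^3(-1)|_Q$ by $\Aa_P$, while part (2) gives $\Ext^1(\Aa_P,T\PP^3(-1)|_Q)\cong H^1(\Aa_P^\vee\otimes\Bb)\neq 0$, producing the non-split extensions of $\Aa_P$ by $T\PP^3(-1)|_Q$. The one genuinely delicate step is the determination of $\delta$ (equivalently, the vanishing $\Hom(\Aa_P,\Bb)=0$ from the slope comparison); everything else is routine bookkeeping with stability, Serre duality and Riemann--Roch.
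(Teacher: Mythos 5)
Your proof is correct, and it takes a genuinely different route from the paper's. For part (1) the paper tensors the resolution $0\to \Oo_Q(-1,-1)\to \Oo_Q^{\oplus 3}\to \Aa_P\to 0$ with $T\PP^3(-1)|_Q^{\vee}$ and chases the cohomology of twisted duals of the restricted tangent bundle, whereas you tensor the defining extension $0\to\Oo_Q\to T\PP^3(-1)|_Q\to\Aa_P\to 0$ with $T\PP^3(-1)|_Q^{\vee}$ and let the acyclicity of $T\PP^3(-1)|_Q^{\vee}$ identify the answer with $h^0$ and $h^1$ of $\End (T\PP^3(-1)|_Q)$, i.e.\ with simplicity and rigidity; this is shorter and more conceptual. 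For part (2) the paper only writes ``similarly,'' and your identification of the connecting map $\delta$ with evaluation of $\mathrm{id}_{\Aa_P}$ against the non-zero extension class is the cleanest way to obtain $h^0(\Aa_P^{\vee}\otimes T\PP^3(-1)|_Q)=0$ and to read off $h^1$ in one stroke (the slope argument you give is an acceptable substitute for the vanishing of $h^0$).

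Your numerical correction in part (2) is also right: the value should be $3$, not $4$. Indeed $\chi(\Aa_P^{\vee})=-1$ (since $h^0=h^2=0$ and $h^1=1$) and $\chi(\Aa_P^{\vee}\otimes\Aa_P)=-2$ (since $h^0=1$, $h^1=3$ by Lemma \ref{rm1} and $h^2=0$), so additivity along your twisted sequence gives $\chi(\Aa_P^{\vee}\otimes T\PP^3(-1)|_Q)=-3$; combined with $h^0=h^2=0$ this forces $h^1(\Aa_P^{\vee}\otimes T\PP^3(-1)|_Q)=3$, exactly as your five-term exact sequence shows. The same value comes out of the computation the paper presumably intended: tensoring the restricted Euler sequence with $\Aa_P^{\vee}$ gives $0\to\Aa_P^{\vee}(-1,-1)\to(\Aa_P^{\vee})^{\oplus 4}\to\Aa_P^{\vee}\otimes T\PP^3(-1)|_Q\to 0$, and since $h^0(\Aa_P^{\vee}\otimes T\PP^3(-1)|_Q)=0$ the one-dimensional space $H^1(\Aa_P^{\vee}(-1,-1))\cong H^1(\Aa_P(-2,-2))$ injects into $H^1(\Aa_P^{\vee})^{\oplus 4}\cong\CC^4$, leaving $h^1=4-1=3$; the stated $4$ overlooks this injection. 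The discrepancy is harmless for the rest of the paper, since only the non-vanishing of $\Ext^1(\Aa_P,T\PP^3(-1)|_Q)$ is ever used.
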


\begin{proof}
For the first assertion, let us consider the sequence
$$ 0\to T\PP^3(-1)|_Q^\vee(-1,-1) \to (T\PP^3(-1)|_Q^\vee)^{\oplus 3} \to \Aa_P \otimes T\PP^3(-1)|_Q^\vee \to 0$$
to obtain
\begin{align*}
0\to H^0(\Aa_P \otimes T\PP^3(-1)|_Q^\vee) &\to H^1( T\PP^3(-1)|_Q^\vee (-1,-1)) \to H^1(T\PP^3(-1)|_Q^\vee)^{\oplus 3} \\
&\to H^1(\Aa_P \otimes T\PP^3(-1)|_Q^\vee) \to H^2(T\PP^3(-1)|_Q^\vee (-1,-1)).
\end{align*}
Since $h^1(T\PP^3(-1)|_Q^\vee (-1,-1))=h^1(T\PP^3(-1)|_Q(-1,-1))=h^2(\Oo_Q(-2,-2))=1$, $h^1(T\PP^3(-1)|_Q^\vee)=0$ and $h^2(T\PP^3(-1)|_Q^\vee (-1,-1))=0$, so we have the assertion.

For the second assertion, we have $h^0(\Aa_P^\vee \otimes T\PP^3(-1)|_Q)=0$ since both of $\Aa_P$ and $T\PP^3(-1)|_Q$ are stable. Thus we have $h^1(\Aa_P^\vee \otimes T\PP^3(-1)|_Q)=4$ similarly.
\end{proof}

\begin{proposition}\label{o2}
Let $\Ee$ be a vector bundle fitting into a non-splitting exact sequence
\begin{equation}\label{eqo2}
0 \to T\PP^3(-1)|_Q\stackrel{j}{\to} \Ee \stackrel{f}{\to} \Aa _P \to 0.
\end{equation}
Then $\Ee$ is stable.
\end{proposition}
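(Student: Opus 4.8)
The plan is to prove stability of $\Ee$ directly, by showing that no subsheaf can destabilize it. First I would record the numerics. Since $T\PP^3(-1)|_Q$ has rank $3$ with $c_1=(1,1)$ and $\Aa_P$ has rank $2$ with $c_1=(1,1)$, the bundle $\Ee$ has rank $5$, $c_1(\Ee)=(2,2)$ and $c_2(\Ee)=6$, so with respect to $\Oo_Q(1,1)$ its slope is $\mu(\Ee)=2/5$, while $\mu(T\PP^3(-1)|_Q)=1/3$ and $\mu(\Aa_P)=1/2$. Both $T\PP^3(-1)|_Q$ and $\Aa_P$ are $\Oo_Q(1,1)$-stable, by Remark \ref{re17} and Proposition \ref{1.1}. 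To prove stability of $\Ee$ it then suffices to show that every saturated proper subsheaf $\Ff\subset\Ee$ has $\mu(\Ff)<2/5$: any destabilizing subsheaf may be replaced by its saturation, and passing to the saturation does not decrease the slope (the quotient that is killed is torsion, hence has effective $c_1$).

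Next I would perform a d\'evissage of such an $\Ff$ along the sequence (\ref{eqo2}). Writing $T:=T\PP^3(-1)|_Q$, set $p:=\mathrm{rank}(\Ff\cap T)$ and $q:=\mathrm{rank}(f(\Ff))$, so that $p+q=\mathrm{rank}(\Ff)$ by the exact sequence $0\to \Ff\cap T\to \Ff\to f(\Ff)\to 0$. Stability of $T$ bounds the degree of $\Ff\cap T$: a proper subsheaf of $T$ has slope $<1/3$, while a rank-$3$ subsheaf has degree at most $\deg(T)=2$. Likewise stability of $\Aa_P$ forces $f(\Ff)$ to have degree $\le 0$ when $q=1$ and $\le \deg(\Aa_P)=2$ when $q=2$. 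Since $\deg(\Ff)=\deg(\Ff\cap T)+\deg(f(\Ff))$, running through the finitely many admissible pairs $(p,q)$ with $0\le p\le 3$, $0\le q\le 2$ and $1\le p+q\le 4$ shows $\mu(\Ff)<2/5$ in every case except $(p,q)=(0,2)$; for instance the tightest surviving case $(2,2)$ gives only $\mu(\Ff)\le 3/8<2/5$. So the whole statement reduces to excluding $p=0$, $q=2$.

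This last case is where the non-splitting hypothesis must enter, and I expect it to be the main obstacle. Here $\Ff\cap T=0$, so $f|_{\Ff}$ is injective and identifies $\Ff$ with a full-rank subsheaf $\Ff':=f(\Ff)\subseteq \Aa_P$; since $\Ff$ destabilizes we need $\deg(\Ff')\ge 2$, and $\deg(\Ff')\le\deg(\Aa_P)=2$, so $\deg(\Ff')=2$, forcing $c_1(\Ff')=(1,1)$ and making $W:=\Aa_P/\Ff'$ of finite length. Because $\Ff\cap T=0$, the sheaf $\Ff\oplus T$ embeds in $\Ee$ as a full-rank subsheaf, yielding an exact sequence $0\to T\to \Ee/\Ff\to W\to 0$ in which $\Ee/\Ff$ is torsion-free, $\Ff$ being saturated. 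Applying $\mathcal{H}om(-,\Oo_Q)$ and using $\mathcal{H}om(W,\Oo_Q)=\mathcal{E}xt^1(W,\Oo_Q)=0$ (as $W$ is supported in codimension $2$) gives $(\Ee/\Ff)^\vee\cong T^\vee$, hence $(\Ee/\Ff)^{\vee\vee}\cong T$; the chain $T\subseteq \Ee/\Ff\subseteq (\Ee/\Ff)^{\vee\vee}\cong T$ (the composite being the canonical map $T\to T^{\vee\vee}$) then forces $\Ee/\Ff=T$ and $W=0$. Thus $\Ff'=\Aa_P$ and $f|_{\Ff}$ is an isomorphism onto $\Aa_P$, which splits (\ref{eqo2}), contradicting the hypothesis. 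This rules out the case $(0,2)$ and completes the proof that $\Ee$ is stable.
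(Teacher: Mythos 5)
Your proof is correct, and its overall strategy coincides with the paper's: bound the slope of a saturated subsheaf of $\Ee$ using the stability of $T\PP^3(-1)|_Q$ and $\Aa_P$, and let the non-splitting hypothesis kill the one surviving case, namely a rank-two subsheaf mapping with full rank onto $\Aa_P$. The two arguments differ in execution. The paper constrains a maximal destabilizer via the vanishings $h^0(\Ee(-1,-1))=h^0(\Ee^\vee(1,1))=0$, concludes that it has rank $2$ and the same slope as $\Aa_P$, and then argues that $f|_{\Ff}$ is either zero (impossible by stability of $T\PP^3(-1)|_Q$) or an isomorphism (which splits the sequence); you instead run the full d\'evissage over the pairs $(\mathrm{rank}(\Ff\cap T),\mathrm{rank}(f(\Ff)))$ and close the critical case $(0,2)$ with a reflexive-hull computation giving $\Ee/\Ff\cong T\PP^3(-1)|_Q$, hence $W=0$ and a splitting. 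Your bookkeeping is the more careful of the two: the paper's proof asserts that $\Ee$ has slope $1/2$, which is the value for the rank-four bundle of Proposition \ref{o1}, whereas the correct threshold for the present rank-five bundle is $2/5$ (equivalently $4/5$ un-normalized), the one you use; your d\'evissage also explicitly disposes of the subsheaf ranks that the paper passes over in silence. The only blemish is notational: you write $\mu(\Aa_P)=1/2$ but $\deg(\Aa_P)=2$, mixing two normalizations of the degree on $Q$; none of the inequalities are affected, but you should fix a single convention throughout.
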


\begin{proof}
Let us assume that $\Ee$ is not stable and take a proper saturated subsheaf $\Ff$ of $\Ee$ with maximal slope. Set $a:= \deg (\Ff)$ and $m:= \mathrm{rank} (\Ff)$.
Since $\Ff$ is saturated
in $\Ee$ and $Q$ is a smooth surface, $\Ff$ is locally free. Since $\Ee$ has slope $1/2$, we get $a/m \ge 1/2$. From (\ref{eqo2}) we get $h^0(\Ee (-1)) = h^0(\Ee ^\vee (1)) =0$. Thus $m$ is neither $1$ nor $3$, and so we have $(m,a)=(2,1)$. Since $\Aa _P$ is stable and of slope $1/2$, so the map $h:=f\vert_{ \Ff}$ is either a zero map or an isomorphism. If $h$ is an isomorphism, then (\ref{eqo2}) splits. If $h=0$, then $\Ff \subset T\PP^3(-1)|_Q$, contradicting the stability of $T\PP^3(-1)|_Q$.
\end{proof}

\begin{proposition}
There exists an indecomposable and globally generated vector bundle of rank $r\ge 2$ on $Q$ with $c_1=(2,2)$, $c_2=6$ and index $(0,0)$ if and only if $2\le r \le5$.
\end{proposition}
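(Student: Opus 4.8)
The plan is to pin down the upper bound with the rank inequality, then realize indecomposable examples for the four small ranks and rule out the top rank by a rigidity argument. By Lemma \ref{le3} the rank-$2$ bundle $\Ff$ in the defining sequence $0\to\Oo_Q^{\oplus(r-2)}\to\Ee\to\Ff\to 0$ lies in $\mathfrak{M}(6)$ with index $(0,0)$, and as recorded just before the statement $h^1(\Ff^\vee)=4$; the inequality $\mathrm{rank}(\Ee)\le h^1(\Ff^\vee)+2$ then forces $r\le 6$. What remains is to produce indecomposable examples for $2\le r\le 5$ and to exclude $r=6$.

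For the existence half I would argue rank by rank. For $r=2$ one takes $\Ee=\Ff\in\mathfrak{M}(6)$ of index $(0,0)$, which exists by Lemma \ref{le3} and is indecomposable since it is stable. For $r=3$ I would pick any non-zero class in $H^1(\Ff^\vee)$, producing a non-split $0\to\Oo_Q\to\Ee\to\Ff\to 0$; being non-split it has no trivial factor, and since $h^0(\Ee(-1,0))\le h^0(\Ff(-1,0))=0$ and likewise for $(0,-1)$, its index is $(0,0)$, so any line-bundle summand would be $\Oo_Q$, which is excluded, whence $\Ee$ is indecomposable. For $r=4$ and $r=5$ I would take the bundles already built in Propositions \ref{o1} and \ref{o2}: a non-split $0\to\Aa_O\to\Ee\to\Aa_P\to 0$ has rank $4$ and is simple (hence indecomposable), while a non-split $0\to T\PP^3(-1)|_Q\to\Ee\to\Aa_P\to 0$ has rank $5$ and is stable. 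In all these cases $c_1=(2,2)$ and $c_2=6$ come from the Whitney formula, global generation follows because the subsheaf has vanishing $H^1$ and both ends are globally generated, and index $(0,0)$ follows as for $r=3$ from the vanishing of $h^0(\cdot(-1,0))$ and $h^0(\cdot(0,-1))$ on sub and quotient.

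The core of the argument is the exclusion of $r=6$. A rank-$6$ bundle with no trivial factor uses all four classes of the $4$-dimensional space $H^1(\Ff^\vee)$, hence is the universal extension of $\Ff$ and is determined by $\Ff$; as $\Ff$ varies over the irreducible family of Lemma \ref{le3}, these bundles form an irreducible family $\Gamma$. I would first check that $(T\PP^3(-1)|_Q)^{\oplus 2}$ belongs to $\Gamma$: it has rank $6$, $c_1=(2,2)$, $c_2=6$, index $(0,0)$ and no trivial factor, and a general quotient of it by $\Oo_Q^{\oplus 4}$ is (by a dimension count on the degeneracy locus) a rank-$2$ bundle $\Ff'\in\mathfrak{M}(6)$ of index $(0,0)$, so it is the universal extension of $\Ff'$. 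By Remark \ref{re17}, $\Ext^1(T\PP^3(-1)|_Q,T\PP^3(-1)|_Q)=0$, hence $\Ext^1((T\PP^3(-1)|_Q)^{\oplus 2},(T\PP^3(-1)|_Q)^{\oplus 2})=0$ and this bundle is rigid.

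It then suffices to show every member of $\Gamma$ is isomorphic to $(T\PP^3(-1)|_Q)^{\oplus 2}$, hence decomposable. The clean route is deformation-theoretic: rigidity makes the isomorphism class locally constant near $(T\PP^3(-1)|_Q)^{\oplus 2}$, and irreducibility of $\Gamma$ propagates this over all of $\Gamma$. A more self-contained alternative, in the spirit of Proposition \ref{split}, is to use semicontinuity to produce a non-zero map $h:T\PP^3(-1)|_Q\to\Ee$ for each $\Ee\in\Gamma$ (since $\dim\Hom(T\PP^3(-1)|_Q,(T\PP^3(-1)|_Q)^{\oplus 2})=2$), argue that $h$ is a subbundle inclusion with locally free quotient of rank $3$ and Chern data $(1,1),2$, rule out the decomposable quotient $\Aa_P\oplus\Oo_Q$ via Lemma \ref{trivial} (it would give a surjection $\Ee\to\Oo_Q$, forcing a trivial factor), identify the quotient with $T\PP^3(-1)|_Q$, and split using $\Ext^1(T\PP^3(-1)|_Q,T\PP^3(-1)|_Q)=0$. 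The main obstacle is precisely this last step: controlling the rank and local freeness of the image of $h$ and excluding the remaining quotients, which is equivalent to making the rigidity-plus-irreducibility argument rigorous, since the self-extension vanishing can only be invoked once all other possibilities have been eliminated.
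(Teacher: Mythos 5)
Your overall architecture matches the paper's: the bound $r\le 6$ from $h^1(\Ff^\vee)=4$, existence for $3\le r\le 5$ via non-split extensions built from $\Aa_O$, $\Aa_P$ and $T\PP^3(-1)|_Q$ (Propositions \ref{o1} and \ref{o2}), and exclusion of $r=6$ by identifying every rank-$6$ member with the rigid bundle $(T\PP^3(-1)|_Q)^{\oplus 2}$. The problem is that the one step that carries all the weight --- showing that \emph{every} rank-$6$ bundle in the family, not just a general one, is isomorphic to $(T\PP^3(-1)|_Q)^{\oplus 2}$ --- is not actually proved in your proposal, and your preferred ``clean route'' for it is invalid. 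Rigidity of $(T\PP^3(-1)|_Q)^{\oplus 2}$ together with irreducibility of the parameter variety only shows that the locus where $\Ee\cong (T\PP^3(-1)|_Q)^{\oplus 2}$ is open and dense; it does not ``propagate over all of $\Gamma$,'' because the isomorphism class can jump on the closed complement. Indeed the whole point of the paper's Proposition \ref{uu1} (and, in the same spirit, Proposition \ref{split}) is to handle exactly the special members that a generic-point argument misses.

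Your ``self-contained alternative'' is the correct strategy --- it is essentially the paper's proof of Proposition \ref{uu1} --- but you leave its substance undone and say so yourself. Concretely, what is missing is: (i) the proof that every non-zero map $h:\Ee\to T\PP^3(-1)|_Q$ has image of rank $3$ (the paper rules out rank $1$ because the image would have to be $\Oo_Q$, and rank $2$ because the image would be $\Oo_Q\oplus\Oo_Q(u,v)$, in either case forcing a trivial factor of $\Ee$ via Lemma \ref{trivial}); (ii) the use of \emph{both} semicontinuity inequalities, $h^0(\mathcal{H}om(T\PP^3(-1)|_Q,\Ee))\ge 2$ and $h^0(\mathcal{H}om(\Ee,T\PP^3(-1)|_Q))\ge 2$, and the dichotomy $h\circ f\ne 0$ versus $h\circ f=0$; and (iii) in the case $h\circ f=0$, the elimination of the possibilities $c_1(\mathrm{Im}(h)^{\vee\vee})\in\{(0,0),(1,0),(0,1)\}$ and the contradiction when $c_1(\ker(h))=(1,1)$ using a second map $f_1$ and the stability of $T\PP^3(-1)|_Q$. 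Without these cases the decomposability of the rank-$6$ bundles, and hence the ``only if'' direction at $r=6$, is not established. (The existence half and the index verifications for $r\le 5$ are fine and agree with the paper, modulo the small observation that ``non-split extension'' does not by itself preclude a trivial direct factor --- one must also use that $\Ff$ is stable, hence has no trivial quotient.)
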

\begin{proof}
From the previous lemmas, we have the existence of indecomposable vector bundles $\Ee$ with the conditions in the assertion if $\mathrm{rank}(\Ee)\le 5$. When $\mathrm{rank}(\Ee)=6$, we claim that every such a bundle is isomorphic to $(T\PP^3(-1)|_Q)^{\oplus 2}$ and it follows from Proposition \ref{uu1}.
\end{proof}

Let $\mathfrak{U}$ be the set of all globally generated vector bundles of rank $6$ with $c_1=(2,2)$, $c_2=6$ and index $(0,0)$. Since the set of all rank $2$ bundles $\Ff$ with the same numeric data
is parametrized by an irreducible variety $U'$ \cite{Soberon}, so the set $\mathfrak {U}$ is parametrized by an irreducible variety, say $U$, because $U$ is an open subset of a quotient bundle over $U'$ from sequence (\ref{heqa}).

\begin{proposition}\label{uu1}
We have $\mathfrak {U} = \{(T\PP^3(-1)|_Q)^{\oplus 2}\}$.
\end{proposition}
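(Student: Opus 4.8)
The plan is to produce, for \emph{every} $\Ee\in\mathfrak{U}$ at once, an explicit locally free resolution by twists of trivial bundles and then to read off the splitting $\Ee\cong (T\PP^3(-1)|_Q)^{\oplus 2}$; this avoids any appeal to the genericity of a point of $U$. For the inclusion $\supseteq$, I would first note that $(T\PP^3(-1)|_Q)^{\oplus 2}$ really lies in $\mathfrak{U}$: it is globally generated of rank $6$ with $c_1=(2,2)$, $c_2=6$, it has no trivial factor, and, since $T\PP^3(-1)|_Q$ is $\Oo_Q(1,1)$-stable (Remark \ref{re17}), no section of $(T\PP^3(-1)|_Q)^{\oplus 2}(-1,0)$ or $(0,-1)$ survives, so its index is $(0,0)$. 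For $\subseteq$, fix $\Ee\in\mathfrak{U}$; it has no trivial factor and sits in a sequence (\ref{heqa}) with $\Ff$ of index $(0,0)$, whence $\Ff\in\mathfrak{M}(6)$ by Lemma \ref{le3}.

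Next I would pin down $h^0(\Ee)$. The formula for $\chi$ gives $\chi(\Ee)=8$; moreover $h^2(\Ee)=h^0(\Ee^\vee(-2,-2))=0$ since a globally generated sheaf has no nonzero map to $\Oo_Q(-2,-2)$, and $h^1(\Ee)=h^1(\Ff)=0$ by Lemma \ref{le3}, so $h^0(\Ee)=8$. Evaluation of global sections then yields
\[
0\to K\to \Oo_Q^{\oplus 8}\to \Ee\to 0,
\]
with $K$ locally free of rank $2$, $c_1=(-2,-2)$ and $c_2=2$. Dualizing (exact, as $\Ee$ is locally free) gives $0\to \Ee^\vee\to \Oo_Q^{\oplus 8}\to K^\vee\to 0$, so $K^\vee$ is a globally generated rank $2$ bundle with $c_1=(2,2)$, $c_2=2$. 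Since $\mathfrak{M}(2)=\emptyset$ (its expected dimension $4\cdot2-11$ is negative) and the non-split unstable bundles of Lemmas \ref{nost} and \ref{le1} all have $c_2\ge 3$, the classification of Lemmas \ref{nost}--\ref{le3} forces $K^\vee$ to split; the only globally generated splittings with these invariants are $\Oo_Q(1,1)^{\oplus 2}$, $\Oo_Q(2,1)\oplus \Oo_Q(0,1)$ and $\Oo_Q(1,2)\oplus \Oo_Q(1,0)$. The last two are ruled out by the index hypothesis: twisting the evaluation sequence by $\Oo_Q(0,-1)$ (resp. $\Oo_Q(-1,0)$) and using $h^1(\Oo_Q(0,-2))=1$ would give $h^0(\Ee(0,-1))\neq 0$ (resp. $h^0(\Ee(-1,0))\neq 0$), contradicting index $(0,0)$. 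Hence $K\cong \Oo_Q(-1,-1)^{\oplus 2}$.

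Finally I would identify the map. We now have $\Ee^\vee=\ker\bigl(\psi:\Oo_Q^{\oplus 8}\to \Oo_Q(1,1)^{\oplus 2}\bigr)$ with $\psi$ surjective. Because $\Ee$ has no trivial factor, Lemma \ref{trivial} gives $H^0(\Ee^\vee)=\Hom(\Ee,\Oo_Q)=0$, so the induced map $H^0(\psi):\CC^8\to H^0(\Oo_Q(1,1))^{\oplus 2}$ is injective between spaces of equal dimension $8$, hence an isomorphism. Under the identity $\Hom_{\Oo_Q}(\Oo_Q^{\oplus 8},\Oo_Q(1,1)^{\oplus 2})=\Hom_{\CC}(\CC^8,H^0(\Oo_Q(1,1))^{\oplus 2})$, the map $\psi$ is therefore the canonical evaluation $\Oo_Q\otimes H^0(\Oo_Q(1,1)^{\oplus 2})\to \Oo_Q(1,1)^{\oplus 2}$ precomposed with the bundle isomorphism $\mathrm{id}\otimes H^0(\psi)$, and this evaluation is the direct sum of two copies of $\mathrm{ev}:\Oo_Q\otimes H^0(\Oo_Q(1,1))\to \Oo_Q(1,1)$. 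Since $\ker(\mathrm{ev})=\Omega^1_{\PP^3}(1)|_Q$ by the restricted Euler sequence, this yields $\Ee^\vee\cong (\Omega^1_{\PP^3}(1)|_Q)^{\oplus 2}$ and so $\Ee\cong (T\PP^3(-1)|_Q)^{\oplus 2}$.

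The hard part is the last step, namely arguing that the whole sheaf morphism $\psi$, and not merely its action on sections, is a direct sum of two evaluations; this becomes automatic once $H^0(\psi)$ is shown to be an isomorphism, so the real bookkeeping is concentrated in excluding the two unbalanced candidates for $K^\vee$ by the index condition. An alternative in the style of the earlier propositions would be to establish the splitting only for a general $\Ee$ and then invoke the irreducibility of $U$ together with semicontinuity, but the resolution argument has the merit of treating every member of $\mathfrak{U}$ simultaneously.
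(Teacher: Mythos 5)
Your argument is correct, and it reaches the conclusion by a genuinely different route from the paper. The paper's proof first shows that $(T\PP^3(-1)|_Q)^{\oplus 2}$ is rigid (via Remark \ref{re17}), deduces from the irreducibility of the parameter space $U$ that a \emph{general} member of $\mathfrak{U}$ is isomorphic to it, and then, for an arbitrary $\Ee$, uses semicontinuity to produce nonzero maps $\Ee \to T\PP^3(-1)|_Q$ and $T\PP^3(-1)|_Q \to \Ee$ and runs a case analysis on the ranks of their images (using the stability of $T\PP^3(-1)|_Q$ and Lemma \ref{trivial}) to split off a direct factor $T\PP^3(-1)|_Q$. You instead work with the minimal free presentation: $h^0(\Ee)=8$ (consistent with $h^0(\Ff)=4$, $h^1(\Ff)=0$ from the proof of Lemma \ref{le3}), the kernel $K$ of evaluation is a rank $2$ bundle whose dual is globally generated with $c_1=(2,2)$ and $c_2=2$, the paper's own rank-two classification forces $K^\vee$ to be one of three split bundles, the index hypothesis eliminates the two unbalanced ones, and the factorization $\psi=\mathrm{ev}\circ(\mathrm{id}\otimes H^0(\psi))$ with $H^0(\psi)$ an isomorphism (which follows from $\Hom(\Ee,\Oo_Q)=0$ by Lemma \ref{trivial}) exhibits $\Ee^\vee$ as $(\Omega_{\PP^3}(1)|_Q)^{\oplus 2}$. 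This treats every member of $\mathfrak{U}$ simultaneously and dispenses with rigidity, irreducibility of $U$ and semicontinuity; the price is that the rank-two classification must be applied to the auxiliary bundle $K^\vee$ rather than to the $\Ff$ of sequence (\ref{heqa}). Two small points: both proofs tacitly read the definition of $\mathfrak{U}$ as excluding bundles with trivial factors (otherwise $\Oo_Q\oplus\Ee'$ with $\Ee'$ a rank-$5$ indecomposable from the preceding results would violate the statement), and your appeal to $\mathfrak{M}(2)=\emptyset$ is dispensable, since the index-by-index classification of Section 4 already shows that every non-split globally generated rank-two bundle with $c_1=(2,2)$ has $c_2\ge 3$.
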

\begin{proof}
Obviously $(T\PP^3(-1)|_Q)^{\oplus 2} \in \mathfrak {U}$. Since $h^1(T\PP^3(-1)|_Q ^\vee \otimes T\PP^3(-1)|_Q )=0$ from Remark \ref{re17}, we have $h^1(\mathcal{E}nd (T\PP^3(-1)|_Q \oplus T\PP^3(-1)|_Q ))=0$ and so the vector bundle $(T\PP^3(-1)|_Q)^{\oplus 2}$ is rigid. Thus there is a non-empty open subset $V$ of $U$ such
that $\Ee \cong (T\PP^3(-1)|_Q)^{\oplus 2}$ for
all $\Ee \in V$.

Let us fix $\Ee \in U$. Since $h^0(\mathcal{H}om (T\PP^3(-1)|_Q ,\Cc)) = h^0(\mathcal{H}om (\Cc ,T\PP^3(-1)|_Q ))  =2$ for all $\Cc \in V$,
the semicontinuity theorem for cohomology gives us the inequalities $h^0(\mathcal{H}om (T\PP^3(-1)|_Q ,\Ee ))\ge 2$ and $h^0(\mathcal{H}om (\Ee ,T\PP^3(-1)|_Q )) \ge 2$.
Assume the existence of $h: \Ee \to T\PP^3(-1)|_Q$ such that $\mathrm{Im} (h)$ has rank $1$ and write $\Oo _Q(a,b):= \mathrm{Im} (h)^{\vee \vee }$. Since $T\PP^3(-1)|_Q$ is locally free, it contains
$\mathrm{Im} (h)^{\vee \vee }$. Since $(0,1)$ and $(1,0)$ are not indices of $T\PP^3(-1)|_Q$, we get $a=b=0$. Since $\mathrm{Im} (h)$ is globally generated, we get $\mathrm{Im} (h) \cong \Oo _Q$. Hence
$\Ee$ has a trivial factor, a contradiction. Now assume the existence of $h : \Ee \to T\PP^3(-1)|_Q$ such that $\mathrm{Im} (h)$ has rank $2$. Set $\Oo _Q(u,v):= c_1(\mathrm{Im} (h)^{\vee \vee })$.
Since $\Ee$ is globally generated, we have $u\ge 0$ and $v\ge 0$. Since $T\PP^3(-1)|_Q$ is stable, we have $u+v \le 1$. Since $\mathrm{Im} (h)$ is a torsion free and globally generated sheaf, it fits
into an exact sequence
$$0 \to \Oo _Q \to \mathrm{Im} (h) \to \Ii _Z(u,v)\to 0$$
for some zero-dimensional scheme $Z$. Since $\Ii _Z(u,v)$ is globally generated and $(u,v) \in \{(0,0),(1,0),(0,1)\}$, we get $Z=\emptyset$. $\mathrm{Im} (u) \cong \Oo _Q\oplus \Oo _Q(u,v)$.
In particular, $\Oo _Q$ is a factor of $\Ee$ and so every non-zero map $h: \Ee \to T\PP^3(-1)|_Q$ has rank $3$. In the same way we see that any non-zero map
$T\PP^3(-1)|_Q \to \Ee$ has image of rank $3$. Recall that $H^0(\mathcal{H}om (\Ee ,T\PP^3(-1)|_Q ))$ is a vector space of dimension $\ge 2$. Fix a non-zero map $f : T\PP^3(-1)|_Q \to \Ee$. 

First assume $h\circ f \ne 0$.
Since $T\PP^3(-1)|_Q$ is stable, so $h\circ f : T\PP^3(-1)|_Q \to T\PP^3(-1)|_Q$ is an isomorphism. Thus $T\PP^3(-1)|_Q$ is a factor
of $\Ee$ and so we have $\Ee \cong T\PP^3(-1)|_Q \oplus \Vv$ for some globally generated vector
bundle $\Vv$ of rank $3$ with $c_1=(1,1)$, $h^0(\Vv )=4$ and no trivial factor. Hence we have $\Ee \cong T\PP^3(-1)|_Q \oplus T\PP^3(-1)|_Q$.

Now assume $h\circ f =0$, i.e. $\mathrm{Im} (f) \subseteq \ker (h)$. Since $f$ is injective, $\mathrm{Im} (f)\cong T\PP^3(-1)|_Q$. The sheaf $\ker (h)$ is reflexive, because it is a kernel of a map between two
vector bundles. Since $Q$ is a smooth surface, $\ker (h)$ is locally free. First assume  $c_1(\mathrm{Im} (h)^{\vee \vee}) =(0,0)$. Since
$\Ee$ is spanned, so is $\mathrm{Im} (u)$. Our assumption on $c_1$ implies that $\mathrm{Im} (h) \cong \Oo _Q^{\oplus 3}$. Since $\Ee$ is spanned, we get
that $\Oo _Q^{\oplus 3}$ is a factor of $\Ee$, a contradiction. Now assume that either $c_1(\mathrm{Im} (h)^{\vee \vee}) =(1,0)$ or $c_1(\mathrm{Im} (h)^{\vee \vee}) =(0,1)$. Since $\mathrm{Im} (h)$ is a rank 3 globally
generated sheaf, but not a trivial vector bundle, we have $h^0(\mathrm{Im} (h))\ge 4$. Since $h^0(T\PP^3(-1)|_Q )=4$ and $T\PP^3(-1)|_Q$ is spanned, we get $\mathrm{Im} (h) =T\PP^3(-1)|_Q$,
a contradiction.

Now assume $c_1(\mathrm{Im} (h)^{\vee \vee}) =(1,1)$, i.e. $c_1(\ker (h))=(1,1)$.
The inclusion $j: \mathrm{Im} (f) \subseteq \ker (h)$ is a injective map between two vector bundles with the same rank and the same
determinant. Hence $j$ is surjective. Recall that $h^0(\mathcal{H}om (T\PP^3(-1)|_Q ,\Ee ))\ge 2$. Take $f_1\in H^0(\mathcal{H}om (T\PP^3(-1)|_Q ,\Ee ))$ linearly independent
from $f$. We get $\mathrm{Im} (f_1) \subseteq \ker (h)$. Since $\ker (h) \cong T\PP^3(-1)|_Q$ and $T\PP^3(-1)|_Q$ is stable, we get a contradiction.
\end{proof}

%%%%%%%%%%%%%%%%%%%%%%%%%%%%%%%%%%%%%%%%%%
\providecommand{\bysame}{\leavevmode\hbox to3em{\hrulefill}\thinspace}
\providecommand{\MR}{\relax\ifhmode\unskip\space\fi MR }
% \MRhref is called by the amsart/book/proc definition of \MR.
\providecommand{\MRhref}[2]{%
  \href{http://www.ams.org/mathscinet-getitem?mr=#1}{#2}
}
\providecommand{\href}[2]{#2}

\end{document}